\documentclass[gdplain]{geradwp}

\PassOptionsToPackage{hyphens}{url}

\usepackage[english]{babel}
\usepackage[utf8]{inputenc}
\usepackage[T1]{fontenc}
\usepackage{lmodern}
\usepackage{csquotes}
\usepackage[square,numbers,comma]{natbib}
\usepackage{lastpage}
\usepackage{xspace}
\usepackage{fancyhdr}
\usepackage{geometry}
\usepackage{setspace}
\usepackage{array}
\usepackage{lscape}
\usepackage{enumitem}
\usepackage{listings}
\usepackage{hyperref}
\usepackage{caption}
\usepackage{graphicx}
\usepackage{float}
\usepackage{amsmath}
\usepackage{amsthm}
\usepackage{dsfont}
\usepackage{amssymb}
\usepackage{stmaryrd}
\usepackage{mathtools}
\usepackage[ruled]{algorithm}
\usepackage[noend]{algpseudocode}
\usepackage{textcomp}
\usepackage{gensymb}
\usepackage{eurosym}
\usepackage[textsize=scriptsize]{todonotes}
\usepackage{indentfirst}
\usepackage{xstring}

\bibliographystyle{plain}

\GDcoverpagewhitespace{6.8cm}
\hypersetup{colorlinks,
    citecolor={black},
    urlcolor={blue},
    linkcolor={blue},
    breaklinks={true}
}

\makeatletter
\ifthenelse{\isundefined{\ALG@name}}{}{\renewcommand{\ALG@name}{\sffamily\footnotesize Algorithm}}
\makeatother
\ifthenelse{\isundefined{\SetAlCapNameFnt}}{}{\SetAlCapNameFnt{\footnotesize}\SetAlCapFnt{\sffamily\footnotesize}}

\geometry{footskip=3\normalbaselineskip}

\lstset{literate=
    {á}{{\'a}}1 {é}{{\'e}}1 {í}{{\'i}}1 {ó}{{\'o}}1 {ú}{{\'u}}1
    {Á}{{\'A}}1 {É}{{\'E}}1 {Í}{{\'I}}1 {Ó}{{\'O}}1 {Ú}{{\'U}}1
    {à}{{\`a}}1 {è}{{\`e}}1 {ì}{{\`i}}1 {ò}{{\`o}}1 {ù}{{\`u}}1
    {À}{{\`A}}1 {È}{{\'E}}1 {Ì}{{\`I}}1 {Ò}{{\`O}}1 {Ù}{{\`U}}1
    {ä}{{\"a}}1 {ë}{{\"e}}1 {ï}{{\"i}}1 {ö}{{\"o}}1 {ü}{{\"u}}1
    {Ä}{{\"A}}1 {Ë}{{\"E}}1 {Ï}{{\"I}}1 {Ö}{{\"O}}1 {Ü}{{\"U}}1
    {â}{{\^a}}1 {ê}{{\^e}}1 {î}{{\^i}}1 {ô}{{\^o}}1 {û}{{\^u}}1
    {Â}{{\^A}}1 {Ê}{{\^E}}1 {Î}{{\^I}}1 {Ô}{{\^O}}1 {Û}{{\^U}}1
    {ñ}{{\~n}}1 {Ñ}{{\~N}}1 {¿}{{?`}}1
    {œ}{{\oe}}1 {Œ}{{\OE}}1 {æ}{{\ae}}1 {Æ}{{\AE}}1 {ß}{{\ss}}1
    {ű}{{\H{u}}}1 {Ű}{{\H{U}}}1 {ő}{{\H{o}}}1 {Ő}{{\H{O}}}1
    {ç}{{\c c}}1 {Ç}{{\c C}}1 {ø}{{\o}}1 {å}{{\r a}}1 {Å}{{\r A}}1
    {€}{{\euro}}1 {£}{{\pounds}}1
    {«}{{\guillemotleft}}1 {»}{{\guillemotright}}1
}

\newcolumntype{L}[1]{>{\raggedright\let\newline\\\arraybackslash\hspace{0pt}}m{#1}}
\newcolumntype{C}[1]{>{\centering\let\newline\\\arraybackslash\hspace{0pt}}m{#1}}
\newcolumntype{R}[1]{>{\raggedleft\let\newline\\\arraybackslash\hspace{0pt}}m{#1}}


\newcommand{\refbis}[2]{\ref{#1}.\ref{#1/#2}}

\newcommand{\Acal}{\mathcal{A}}   \newcommand{\Abb}{\mathbb{A}}
   \renewcommand{\Bbb}{\mathbb{B}}
\newcommand{\Ccal}{\mathcal{C}}   
\newcommand{\Dcal}{\mathcal{D}}

\newcommand{\Hcal}{\mathcal{H}}   
\newcommand{\Ical}{\mathcal{I}}

\newcommand{\Lcal}{\mathcal{L}}   
   
 \newcommand{\Nbf}{\mathbf{N}}  \newcommand{\Nbb}{\mathbb{N}}
   
 \newcommand{\Pbf}{\mathbf{P}}  
   
   \newcommand{\Rbb}{\mathbb{R}}
   
\newcommand{\Tcal}{\mathcal{T}}

\newcommand{\Xcal}{\mathcal{X}}   \newcommand{\Xbb}{\mathbb{X}}
\newcommand{\Ycal}{\mathcal{Y}}   \newcommand{\Ybb}{\mathbb{Y}}
   \newcommand{\Zbb}{\mathbb{Z}}

\newcommand{\algoname}[1]{\texttt{#1}\xspace}
\newcommand{\dfo}     {\algoname{DFO}}

\newcommand{\pof}     {\algoname{POf}}
\newcommand{\pom}     {\algoname{DFPOm}}
\newcommand{\dsm}     {\algoname{DSM}}
\newcommand{\cdsm}    {\algoname{cDSM}}
\newcommand{\poll}    {\algoname{poll}}
\newcommand{\covering}{\algoname{covering}}
\newcommand{\search}  {\algoname{search}}
\newcommand{\update}  {\algoname{update}}
\newcommand{\pmp}     {\algoname{Pmp}}
\newcommand{\DcalS}{\Dcal_{\tt{S}}} \newcommand{\TcalS}{\Tcal_{\tt{S}}} \newcommand{\tS}{t_{\tt{S}}} 
\newcommand{\DcalC}{\Dcal_{\tt{C}}} \newcommand{\TcalC}{\Tcal_{\tt{C}}} \newcommand{\tC}{t_{\tt{C}}} 
\newcommand{\DcalP}{\Dcal_{\tt{P}}} \newcommand{\TcalP}{\Tcal_{\tt{P}}} \newcommand{\tP}{t_{\tt{P}}} 
 \newcommand{\TcalA}{\Tcal_{\tt{A}}} \newcommand{\tA}{t_{\tt{A}}} 

\newcommand{\solvername}[1]{\textsc{#1}\xspace}
\newcommand{\nomad}{\solvername{Nomad}}
\newcommand{\prima}{\solvername{Prima}}

\algrenewcommand{\textproc}{\textit}
\algrenewcommand{\Return}{\State\algorithmicreturn\xspace}
\algnewcommand{\algorithmicelif    }{\algorithmicelse\ \algorithmicif}
\algnewcommand{\algorithmicendif   }{\algorithmicend\ \algorithmicif}
\algnewcommand{\algorithmicendfor  }{\algorithmicend\ \algorithmicfor}
\algnewcommand{\algorithmicendwhile}{\algorithmicend\ \algorithmicwhile}
\algnewcommand{\algorithmicendloop }{\algorithmicend\ \algorithmicloop}

\usepackage{forloop}
\newcounter{ct}
\newcommand{\markdent}[1]{\hspace{-\algorithmicindent}\forloop{ct}{0}{\value{ct} < #1}{\hspace{1em}}}
\newcommand{\IState}[1]{\Statex\markdent{#1}}

\DeclareMathOperator*{\argmin}{argmin}
\DeclareMathOperator*{\argmax}{argmax}
\DeclareMathOperator*{\minimize}{minimize}

\let\integ\int
\newcommand{\dsum}[2]       {\sum\limits_{#1}^{#2}}

\newcommand{\dinteg}[2]{\displaystyle\integ_{#1}^{#2}}
\newcommand{\defequal}{\triangleq}
\newcommand{\minusinfty}{{-}\infty}
\newcommand{\1}             {\mathds{1}}
\newcommand{\abs}[1]        {\left\lvert{#1}\right\rvert}
\newcommand{\textabs}[1]    {\lvert{#1}\rvert}
\newcommand{\norm}[1]       {\left\lVert{#1}\right\rVert}
\newcommand{\textnorm}[1]   {\lVert{#1}\rVert}
\newcommand{\floor}[1]      {\left\lfloor{#1}\right\rfloor}

\newcommand{\ceil}[1]       {\left\lceil{#1}\right\rceil}
\newcommand{\textceil}[1]   {\lceil{#1}\rceil}
\newcommand{\floorceil}[1]  {\left\lfloor{#1}\right\rceil}

\newcommand{\sqrtp}[2]      {\sqrt[\leftroot{-2}\uproot{5}#1]{#2}}
\newcommand{\Nbbinf}        {\overline{\Nbb}}
\newcommand{\Rbbinf}        {\overline{\Rbb}}
\newcommand{\llb}           {\llbracket}
\newcommand{\rrb}           {\rrbracket}
\renewcommand{\int}{\mathrm{int}}
\newcommand{\cl}{\mathrm{cl}}
\newcommand{\acc}{\mathrm{acc}}
\newcommand{\ACC}{\mathcal{ACC}}
\newcommand{\partitioncup}  {\sqcup}

\newcommand{\dist}          {\mathrm{dist}}
\newcommand{\pointplusset}[2]{\{#1\}+#2}
\newcommand{\barrier}[2]{\overline{#1_{#2}}}

\newcommand{\fct}[5]{
    #1:\left\{\begin{array}{ccl}
        #3 & \to     & #5\\
        #2 & \mapsto & #4
    \end{array}\right.}

\newcommand{\compactarray}[1]{\begingroup\renewcommand{\arraystretch}{1.00}\begin{array}{c}#1\end{array}\endgroup}
\newcommand{\problemoptimfree}[3]{\underset{\compactarray{#2}}{#1} \quad #3}
\newcommand{\problemoptimoneline}[4]{\problemoptimfree{#1}{#2}{#3 \quad \mbox{subject to} \quad #4}}
\newcommand{\problemoptim}[4]{
    \begin{array}{cl}
        \underset{\compactarray{#2}}{#1} & \begin{array}{l}#3\end{array} \\ [1ex]
        \mbox{subject to} & \begin{array}[t]{ll}#4\end{array}
    \end{array}
}

\newtheorem{assumption}{Assumption}
\newtheorem{proposition}{Proposition}
\newtheorem{lemma}{Lemma}
\newtheorem{definition}{Definition}
\newtheorem{theorem}{Theorem}
\newtheorem{remark}{Remark}

\newtheorem{framework}{Framework}

\newlength{\myeqskip}  \setlength{\myeqskip}{6pt}
\AtBeginDocument{%
    \setlength\abovedisplayskip{\myeqskip}%
    \setlength\belowdisplayskip{\myeqskip}%
    \setlength\abovedisplayshortskip{\myeqskip-\baselineskip}%
    \setlength\belowdisplayshortskip{\myeqskip}}

\begin{document}

\GDtitle{A Partitioned Optimization Framework for Structure-Aware Problems}
\GDmonth{Octobre}{October}
\GDyear{2025}
\GDnumber{XX}
\GDauthorsShort{C. Audet, P.-Y. Bouchet, L. Bourdin}
\GDauthorsCopyright{P.-Y. Bouchet}
\GDpostpubcitation{Hamel, Benoit, Karine H\'ebert (2021). \og Un exemple de citation \fg, \emph{Journal of Journals}, vol. X issue Y, p. n-m}{https://www.gerad.ca/fr}
\GDrevised{XX}{XX}{20XX}


\begin{GDtitlepage}
    \begin{GDauthlist}
        \GDauthitem{Pierre-Yves Bouchet \ref{affil:polymtl}\GDrefsep\ref{affil:gerad}}
        \GDauthitem{Charles Audet \ref{affil:polymtl}\GDrefsep\ref{affil:gerad}}
        \GDauthitem{Loïc Bourdin \ref{affil:unilim}}
    \end{GDauthlist}
    \begin{GDaffillist}
        \GDaffilitem{affil:polymtl}{École Polytechnique de Montréal, Montr\'eal (Qc), Canada, H3T 1J4}
        \GDaffilitem{affil:gerad}{GERAD, Montr\'eal (Qc), Canada, H3T 1J4}
        \GDaffilitem{affil:unilim}{XLIM Research Institute, University of Limoges, France}
    \end{GDaffillist}
    \begin{GDemaillist}
        \GDemailitem{pierre-yves.bouchet@polymtl.ca}
        \GDemailitem{charles.audet@gerad.ca}
        \GDemailitem{loic.bourdin@unilim.fr}
    \end{GDemaillist}
\end{GDtitlepage}

\GDabstracts
\begin{GDabstract}{Abstract}
    This work tackles a class of optimization problems in which fixing some well-chosen combinations of the variables makes the problem substantially easier to solve.
    We consider that the variables space may be partitioned into subsets that fix these combinations to given values, so that the restriction of the problem to any of the partition sets admits a tractable solution.
    Then, we exhibit a reformulation of the problem that consists in searching for the partition set index that minimizes the objective value of the solution to the restricted problem.
    We name \textit{partitioned optimization framework} (\pof) the formalization of this class of problems and this reformulation process.
    As we prove in this work, the \pof allows solving the original problem by focusing on the reformulated problem: all solutions to the reformulated problem are partition indices for which the solution to the associated restricted problem is also a solution to the original problem.
    Second, we introduce a \textit{derivative-free partitioned optimization method} (\pom) to efficiently solve problems that fit in the \pof.
    We prove that the reformulated problem is nicely handled by \textit{derivative-free optimization} (\dfo) algorithms with a \textit{\covering step}.
    Then the \pom consists in solving the reformulated problem using such \dfo algorithm with a \covering step to obtain an optimal partition index, and to return the solution to the associated restricted problem as a solution to the initial problem.
    Finally, we illustrate how the \pof allows solving some classes of problems.
    We first focus on an infinite-dimensional case, by solving analytically an optimal control problem that challenges standard methods from the literature.
    Then, we apply the \pom on a class of finite-dimensional problems called \textit{composite greybox problems}, and we highlight the gain in numerical performance provided by the \pom by comparing it to two popular \dfo solvers.
\end{GDabstract}

\begin{GDacknowledgements}
    This work was supported by the first NSERC discovery grant RGPIN-2020-04448 of Audet.
\end{GDacknowledgements}

\clearpage \newpage
\GDarticlestart

\section{Introduction}
\label{section:intro}

In this work, we consider the optimization problem
\begin{equation}
    \label{problem:P_initial}
    \tag{$\Pbf_\texttt{ini}$}
    \problemoptimoneline{\minimize}{y \in \Ybb}{\varphi(y)}{y \in \Omega,}
\end{equation}
where~$\Ybb$ is a (possibly infinite-dimensional) Banach space, and~$\varphi: \Ybb \to \Rbbinf \defequal \Rbb \cup \{\pm \infty\}$ (possibly discontinuous) has a nonempty effective domain~$Y \defequal \{y \in \Ybb : \varphi(y) \neq \pm\infty\}$ intersecting~$\Omega \subseteq \Ybb$ (which is possibly not closed).
We consider that~$\Ybb$ may be partitioned so that restricting Problem~\eqref{problem:P_initial} to any partition set makes it significantly simpler to solve.
We introduce a \textit{partitioned optimization framework} (\pof) to study this context, and a \textit{derivative-free partitioned optimization method} (\pom) to efficiently solve Problem~\eqref{problem:P_initial} when it fits in the \pof.
Section~\ref{section:intro/illustrative_example} illustrates the \pof, Section~\ref{section:intro/motivation} discusses our motivation to develop it, and Section~\ref{section:intro/contribution_outline} outlines this work and our contribution.

\subsection{Illustrative example with a composite greybox problem}
\label{section:intro/illustrative_example}

This section illustrates the \pof with two variables ($\Ybb \defequal \Rbb^2$), no constraints ($\Omega \defequal \Ybb$), and
\begin{equation*}
    \fct{\varphi}{(y_1,y_2)}{\Ybb}{(y_2-\sigma(y_1))^2+\varepsilon(y_1)}{\Rbb}
\end{equation*}
shown in Figure~\ref{figure:intro_example} (this example is solved in Section~\ref{section:applications/monovariable}).
Note that~$\varphi$ is explicitly defined with respect to the functions~$\sigma$ and~$\varepsilon$, while~$\sigma$ and~$\varepsilon$ are \textit{blackboxes}~\cite{AuHa2017} since their expressions are not provided.

\begin{figure}[!hb]
    \centering
    \includegraphics[width=0.43\linewidth, trim = 0 20  0 40, clip]{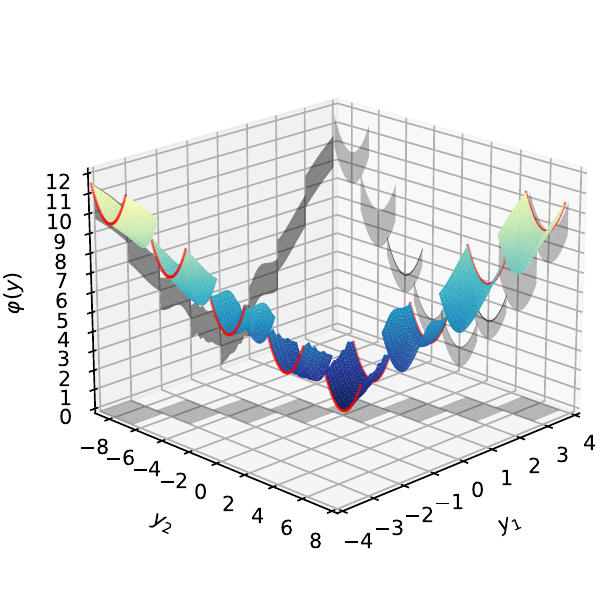}
    \includegraphics[width=0.41\linewidth, trim = 0  0  0  0, clip]{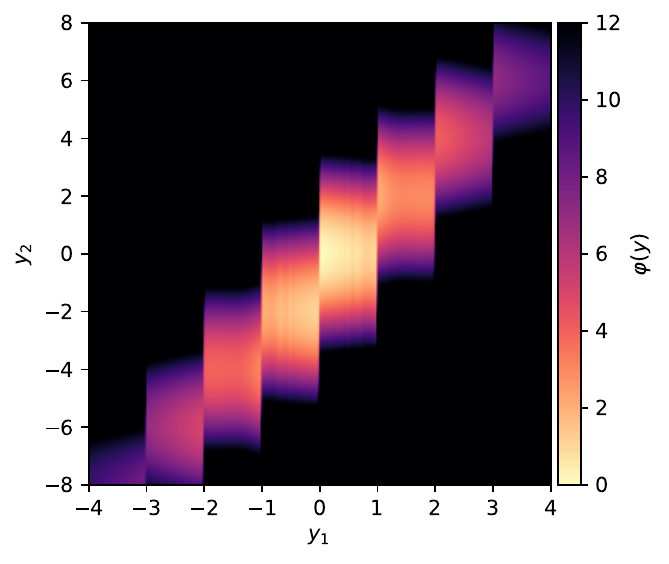}
    \hfill
    \caption{The function~$\varphi$ from Sections~\ref{section:intro/motivation} and~\ref{section:applications/monovariable}, minimized efficiently via the \pof. For readability, at each~$y_1 \in [-4,4]$ the two graphs actually plot only the values of the quadratic~$y_2 \mapsto \varphi(y_1,y_2)$ for~$y_2 \in [\sigma(y_1)-1.3,\ \sigma(y_1)+1.3]$.}
    \label{figure:intro_example}
\end{figure}

Minimizing~$\varphi$ over~$\Ybb$ is difficult.
Yet,~$\varphi$ has a simpler expression when~$y_1$ is fixed: for all~$y_1 \in \Rbb$, the function~$y_2 \mapsto \varphi(y_1, y_2)$ is quadratic.
We exploit this property by partitioning~$\Ybb$ as~$\Ybb = \partitioncup_{x \in \Xbb} \Ybb(x)$, where for all partition set index~$x \in \Xbb \defequal \Rbb$, the partition set~$\Ybb(x) \defequal \{x\} \times \Rbb$ is such that the restriction of~$\varphi$ to~$\Ybb(x)$ (denoted by~$\varphi_{|\Ybb(x)}$) is simple to minimize.
We define an \textit{oracle function}~$\gamma: \Xbb \to \Ybb$ that, for all~$x \in \Xbb$, computes the minimizer of~$\varphi_{|\Ybb(x)}$.
Its associated objective function value is denoted by~$\Phi(x) \defequal \varphi(\gamma(x))$.
As we highlight in this work, the \textit{optimal partition set index}~$x^*$ minimizing~$\Phi$ provides a solution to the initial problem as the point~$\gamma(x^*)$.
In this example, for all~$x \in \Xbb$, the minimizer of~$\varphi_{|\Ybb(x)}$ is~$\gamma(x) \defequal (x,\sigma(x))$, and its objective function value equals~$\Phi(x) = \varepsilon(x)$.
The minimizer~$x^* \defequal 0$ of~$\Phi$ yields~$\gamma(x^*) = (0,0)$, which is indeed the global solution to the problem.

The \pof formalizes the class of optimization problems where a similar approach is possible.
We also introduce our \pom to compute the optimal partition set index~$x^*$, since the problem of minimizing~$\Phi$ is not trivial in general.
For completeness, let us also introduce another function that the \pof relies on, the \textit{index function}~$\chi$, defined for all~$y \in \Ybb$ as the index~$x \in \Xbb$ of the unique partition set~$\Ybb(x)$ containing~$y$.
In the example above,~$\chi(y) \defequal y_1$ for all~$y \defequal (y_1,y_2) \in \Ybb$, since~$y \in \Ybb(x)$ only when~$x = y_1$.

\subsection{Motivation for this work}
\label{section:intro/motivation}

This work introduces the \pof as a new optimization framework that leverages inherent structure of some optimization problems to describe their potential solutions, and the \pom to exploit the \pof in practice.
Several applications may enjoy the \pof and be solved efficiently through the \pom.

A first broad field of applications is optimal control problems~\cite{Vinter10Controle} with discontinuous Mayer cost.
An example consists in optimizing the flight of a parachute so that it lands on a target made of concentric rings with different costs, while minimizing overall efforts.
The \textit{Mayer cost} of the flight is the cost associated to the ring the parachute ends in, which is therefore discontinuous.
However, if we impose the landing point of the parachute, the Mayer cost becomes fixed, so the problem becomes smooth, and then solvable (at least numerically with standard methods).
Thus, we may partition the variables space (the full trajectory of the parachute plus the control to ensure it) according to the landing point.
This context was our initial motivation in developing the \pof and to consider~$\Ybb$ infinite-dimensional.
An example of discontinuous optimal control problem is provided in Section~\ref{section:example_optimal_control}.
It challenges standard methods but is analytically solved using the \pof.
A broader application of the \pof in discontinuous contexts of optimal control theory is given in the PhD thesis~\cite[Chapter~7]{BouchetPhD} (in French), and in our technical report~\cite{BoAuBo21PWMayerCost}.

Another rich field is that of composite greybox problems~\cite{LaMe24structureAware,LaMeZh21ManifoldComposite} where, as in Section~\ref{section:intro/illustrative_example}, the inputs of the blackbox components of the problem are low-dimensional.
Several examples of such problems are discussed in Section~\ref{section:applications}.

Finally, we have initiated applications to \textit{counterfactuals}~\cite{VeBoHoHiDiSh22CounterfactualReview,VAFoPaVi24CFOPT} for \textit{contextual optimization}~\cite{SaChDeFoFrVi25SurveyContextualCO}.
Computing such counterfactuals is a trending topic in machine learning, and its relation to the \pof is natural.
A contextual problem consists in minimizing an objective function parameterized by a \textit{context vector}, that is usually predicted from data by some machine learning model.
Given a context~$x$, its associated contextual solution~$y^*(x)$, and another element~$y \neq y^*(x)$, a \textit{counterfactual of~$x$ with respect to~$y$} is a context~$x' \neq x$ close to~$x$ and such that~$y^*(x') = y$.

Other possible applications may likely connect to the \pof, such as almost \textit{separable problems}~\cite{stefanov2021Separable} and problems that become smooth when a constraint linking the variables is added.

\subsection{Contribution and outline of this work}
\label{section:intro/contribution_outline}

Our contribution is fourfold.
First, we state the \pof, and we show that we may recover a solution to Problem~\eqref{problem:P_initial} from an optimal partition set index.
Second, we propose the \pom to exploit the \pof, using a class of \textit{derivative-free optimization} (\dfo) algorithms to compute an optimal partition set index.
Third, we substantiate our infinite-dimensional considerations by solving with the \pof a discontinuous optimal control problem that challenges usual techniques from the literature.
Fourth, we empirically assert the performance of the \pom through numerical experiments.
They ensure that our theoretical analysis is reflected in practice, and compare the \pom against two state-of-the-art \dfo solvers solving Problem~\eqref{problem:P_initial} directly.

This work is organized as follows.
A literature review is provided in Section~\ref{section:literature}.
Then, Section~\ref{section:results} addresses our first two goals: it formalizes the \pof and highlights its properties in Theorem~\ref{theorem:connections_solutions}, and it derives the \pom to solve Problem~\eqref{problem:P_initial} in Theorem~\ref{theorem:solving_P}.
Our third goal follows in Section~\ref{section:example_optimal_control}, which uses Theorem~\ref{theorem:connections_solutions} to solve analytically a discontinuous version of a classical optimal control problem on which standard methods cannot be applied directly.
Finally, our fourth goal is addressed in Sections~\ref{section:applications} and~\ref{section:applications_heavy}.
Section~\ref{section:applications} shows how the \pom solves some finite-dimensional composite optimization problems, and that numerical observations match our theoretical claims from Theorem~\ref{theorem:solving_P}.
Section~\ref{section:applications_heavy} considers more challenging problem instances (in higher dimension), and shows that solving these problems directly with two \dfo solvers from the literature is less efficient than our strategy.
Finally, Section~\ref{section:discussion} discusses our work and its possible extensions.

\clearpage \newpage
\section{Review of related literature}
\label{section:literature}

The \pof relates to the frameworks of \textit{parametric optimization}~\cite{St18Parametric} and \textit{bilevel optimization}~\cite{DempeZemkoho2020bilevel}.
Indeed, parametric optimization studies problems where the objective function and the feasible set depend on a tuneable parameter.
In our case, the parameterized problem is to minimize~$\varphi$ over~$\Ybb(x) \cap \Omega$, where the parameter of the problem is the partition set index~$x \in \Xbb$.
The local continuity of the parametric solution in discontinuous contexts is studied in~\cite{FeKaKr21Param}, but, to the best of our knowledge, no reference in parametric optimization claims results similar to ours.
Moreover, the terminology \textit{partitioned optimization} highlights that Problem~\eqref{problem:P_initial} is non-parametric and that the nature of the so-called parameter in the restricted problem is left to the user (as the partition set indices result from the chosen partition of~$\Ybb$).
Similarly, bilevel optimization describes problems in which some of the variables are fixed as a solution to a nested subproblem.
However, the difference in terminology highlights a contextual difference since Problem~\eqref{problem:P_initial} has only one level and, unlike in usual bilevel problems, there is no conflictual min-max structure.
Some references study bilevel nonsmooth problems~\cite{Mordukhovich2020} and apply \dfo algorithms~\cite{DiKuRiZe23DSMforBilevel}, but, to the best of our knowledge, none provides results similar to ours.

An important aspect of the \pof is its exploitation through the \pom.
The problem of computing an optimal partition set index is akin to a usual \textit{blackbox optimization}~\cite{AlAuGhKoLed2020} problem.
To solve it, we use \textit{\dfo algorithms with a \covering step}~\cite{AuBoBo24Covering} since, to our best knowledge, they are the only algorithms with guarantees to return a local solution in discontinuous contexts under mild assumptions.

Although the \pom leverages \dfo, we remark that the \pof itself targets problems that differ from usual use-cases from \dfo.
Indeed, the dimension of~$\Ybb$ may be high in Problem~\eqref{problem:P_initial}, while \dfo algorithms usually handle up to a few dozens of variables~\cite[Section~1.4]{AuHa2017}.
Moreover, the \pof requires some information on the problem to define a relevant partition of~$\Ybb$ and ensure the tractability of the minimizer of~$\varphi$ over each partition set, which is not available in the blackbox optimization setting that is popular in \dfo.
The \dfo contexts that match the most with the \pof are \textit{greybox optimization}~\cite{AuFr21GreyBoxTuto} and large-dimension concerns.
Greybox optimization considers problems where partial information on the problem is available, as in our example in Section~\ref{section:intro/illustrative_example}.
Regarding large-dimensional \dfo, the \pof has complementarity with the literature.
Indeed, usual techniques rely on dimension-selection~\cite{DzWi22} that assume \textit{active subspaces}~\cite{ConstantineBook15,WyBiWi21ActiveSubspaces}, or random directions~\cite{CaSc18,GrRoViZh2015,RoRo23ReducedSpaces}.
These techniques likely outperform the \pof when the objective function has a small sensitivity with respect to most of the variables but a high sensitivity with respect to the few others.
Yet, they likely fail on cases where the objective function mostly depends on a few complex combinations of the variables instead of all variables directly.
In contrast, this last context is favourable to the \pof.
As illustrated in Section~\ref{section:applications_heavy}, the \pom outperforms usual \dfo algorithms on four problems with this property.

As detailed in Section~\ref{section:applications}, the main application of the \pof that we consider in this work relates to greybox composite optimization problems.
Although the associated literature is rich,~\cite{LaMe24structureAware,LaMeZh21ManifoldComposite} claim that most papers assume specific structures for a composite function~$\varphi \defequal \widetilde{\varphi}\circ\sigma+\varepsilon$.
We observe that our approach has complementarity with those in~\cite{LaMe24structureAware,LaMeZh21ManifoldComposite}.
Indeed,~\cite{LaMe24structureAware} considers cases where~$\widetilde{\varphi}$ is cheap to evaluate, but with a possibly complex structure and few inputs, and~$\sigma$ and~$\varepsilon$ expensive with possibly many inputs (around~$65$ at most).
In contrast, we better tackle cases where~$\widetilde{\varphi}$ has a simple structure and numerous inputs but~$\sigma$ and~$\varepsilon$ have few inputs.
Moreover,~\cite{LaMeZh21ManifoldComposite} also considers~$\widetilde{\varphi}$ well-structured and cheap to evaluate, but unlike us, it assumes that~$\sigma$ and~$\varepsilon$ are smooth.

The terminology \textit{partitioned optimization} also describes the \dfo methods~\cite{JoMa21DIRECT,LiLuPi10}.
However, these methods consider a \textit{discrete} partition which does not simplify the restricted problem, as they partition a hyperrectangle into smaller ones and recursively partition the most promising ones.
Then, we believe that our \textit{continuous} and structure-simplifying partition fits in the same terminology without confusion.
The \pof may also be related to \textit{optimization on manifold}~\cite{boumal23Manifold}, which considers problems with a constraint inducing an explicit manifold on the variables space.
This happens in our case to the problem restricted to any partition set, when each partition set of~$\Ybb$ is a manifold.
In practice, when~$\Ybb$ has a finite dimension, it is likely that~$\Ybb$ is indeed partitioned into a continuum of manifolds.

\section{Formal framework and related theoretical results}
\label{section:results}

This section considers Problem~\eqref{problem:P_initial} in its general form (that is,~$\varphi$ is not limited to its expression from Section~\ref{section:intro/illustrative_example}).
Section~\ref{section:results/framework} formalizes the \pof and its associated properties, and
Section~\ref{section:results/algorithm} details the \pom that we use to exploit the \pof and highlights the resulting convergence analysis.
The proofs of our claims are left to Section~\ref{section:results/proofs}.

\textbf{Notation:}
We denote by~$\Rbbinf \defequal \Rbb \cup \{\pm\infty\}$,~$\Nbbinf \defequal \Nbb \cup \{+\infty\}$,~$\Rbb^* \defequal \Rbb \setminus \{0\}$ and~$\Nbbinf^* \defequal \Nbbinf \setminus \{0\}$.
Consider two normed spaces~$\Abb$ and~$\Bbb$.
A set~$A \subseteq \Abb$ is said to be \textit{ample}~\cite{AuBoBo24Covering} if~$A \subseteq \cl(\int(A))$ (where~$\cl$ and~$\int$ denote respectively the closure and interior operators), and to be a \textit{continuity set} of a function~$f: \Abb \to \Rbbinf$ if~$f(a)$ is finite for all~$a \in A$ and~$f_{|A}$ is continuous.
For all collections~$(A_i)_{i=1}^{N}$ of subsets of~$\Abb$, with~$N \in \Nbbinf^*$, their union is denoted by~$\partitioncup_{i=1}^{N} A_i$ when the sets are pairwise disjoint.
For all sequences~$(b^k)_{k \in \Nbb} \in \Bbb^\Nbb$, we denote by~$\acc((b^k)_{k \in \Nbb})$ the set of all accumulation points of~$(b^k)_{k \in \Nbb}$.
For all functions~$g: \Abb \to \Bbb$, all sets~$A \subseteq \Abb$ and all points~$a \in \Abb$, we denote by~$g(A) \defequal \{g(a): a \in A\}$ and by~$\ACC(g;a)$ the union of~$\acc((g(a^k))_{k \in \Nbb})$ over all sequences~$(a^k)_{k \in \Nbb} \in \Abb^\Nbb$ converging to~$a$.
For all functions~$f: \Abb \to \Rbbinf$ and all sets~$A \subseteq \Abb$, we denote by~$\barrier{f}{A}: \Abb \to \Rbbinf$ the \textit{extreme barrier function}~\cite{AuDe2006} mapping all~$a \in \Abb$ to~$f(a)$ if~$a \in A$ and to~$+\infty$ if~$a \notin A$.
In particular,~$\barrier{\varphi}{\Omega}$ denotes the extreme barrier function associated to~$\varphi$ and~$\Omega$ from Problem~\eqref{problem:P_initial}.

\subsection{Formal definition of the \pof and theoretical properties}
\label{section:results/framework}

This section states the \pof, given in Framework~\ref{framework:PO}, and its properties, enclosed in Theorem~\ref{theorem:connections_solutions}.

\begin{framework}[partitioned optimization framework (\pof)]
    \label{framework:PO}
    Let $\Xbb$ be a finite-dimensional normed space, and $\Ybb = \partitioncup_{x \in \Xbb} \Ybb(x)$ be a partition such that the \textit{subproblem associated to Problem~\eqref{problem:P_initial} at~$x$},
    \begin{equation}
        \label{problem:P_subproblem}
        \tag{$\Pbf_\texttt{sub}(x)$}
        \problemoptimoneline{\minimize}{y \in \Ybb(x)}{\varphi(y)}{y \in \Omega,}
    \end{equation}
    admits a nonempty set~$\Gamma(x) \subseteq \Ybb(x)$ of global solutions for all~$x \in X \defequal \{x \in \Xbb : \Ybb(x) \cap Y \cap \Omega \neq \emptyset\}$, that we name the \textit{set of effective partition set indices}.
    Define an \textit{oracle function}~$\gamma: X \to Y$, assumed tractable and satisfying~$\gamma(x) \in \Gamma(x)$ for all~$x \in X$.
    Define also the \textit{index function}~$\chi: \Ybb \to \Xbb$ that, for all~$y \in \Ybb$, maps~$y$ to the index~$x \in \Xbb$ of the unique partition set~$\Ybb(x)$ containing~$y$.
    Finally, define the problem of computing an optimal partition set index as the \textit{reformulation of Problem~\eqref{problem:P_initial}},
    \begin{equation}
        \label{problem:P_reformulated}
        \tag{$\Pbf_\texttt{ref}$}
        \problemoptimfree{\minimize}{x \in \Xbb}{\Phi(x),}
        \qquad \mbox{where} \qquad
        \fct{\Phi}{x}{\Xbb}{\left\{\begin{array}{ll}\varphi(\gamma(x)) & \mbox{if}~ x \in X, \\ +\infty & \mbox{otherwise}.\end{array}\right.}{\Rbbinf}
    \end{equation}
\end{framework}

Let us stress that ensuring the tractability of the oracle function~$\gamma$ is not within the scope of this work.
In practice, we consider that we possess an optimization method to tackle Subproblem~\eqref{problem:P_subproblem} for any partition set index~$x$.
Then~$\gamma$ consists in the function that, given any~$x$, calls this method and returns its output.
We state the \pof in Framework~\ref{framework:PO} by assuming that the oracle function~$\gamma$ computes, for all~$x$, exactly a global solution to Subproblem~\eqref{problem:P_subproblem}.
Our theoretical results are derived from this assumption.
However, we show in Sections~\ref{section:applications/dim2} and~\ref{section:applications_heavy/dim2_heavy} that the \pom remains efficient when~$\gamma(x)$ computes an approximation of a global solution to Subproblem~\eqref{problem:P_subproblem}.
The \pof may even retain theoretical properties when~$\gamma$ returns only an approximation of a local solution.
We announce in Section~\ref{section:discussion/perspectives} a future work that will study an alteration of Framework~\ref{framework:PO} allowing for this relaxation of the definition of~$\gamma$.
Similarly, how to efficiently partition~$\Ybb$ in general is not within our scope either.
We believe that finding a relevant partition is often simple, as illustrated in an infinite-dimensional problem in Section~\ref{section:example_optimal_control} and a class of finite-dimensional composite problems in Section~\ref{section:applications}.

To study the \pof, we define first a notion of \textit{generalized local solution}.
This notion, hinted in~\cite{AuBoBo24Covering} but not formally stated, is suited for problems with a possibly nonclosed feasible set and a possibly not lower semicontinuous objective function.
We introduce it in Definition~\ref{definition:generalized_local_solutions} and discuss it in Remark~\ref{remark:generalized_local_solutions}, for a generic problem since we will apply it to either Problem~\eqref{problem:P_initial} and Reformulation~\eqref{problem:P_reformulated}.

\begin{definition}[Generalized local solution]
    \label{definition:generalized_local_solutions}
    Consider a normed space~$\Abb$, and~$A \subseteq \Abb$, and~$f: \Abb \to \Rbbinf$.
    Any point~$a^* \in \cl(A)$ is said to be a generalized local solution to the problem
    \begin{equation*}
        \problemoptimoneline{\minimize}{a \in \Abb}{f(a)}{a \in A}
    \end{equation*}
    if there exists a neighbourhood~$\Acal^* \subseteq \Abb$ of~$a^*$ such that~$\inf \barrier{f}{A}(\Acal^*) = \inf \ACC(\barrier{f}{A}, a^*)$.
\end{definition}

We may now claim the main property of the \pof, in Theorem~\ref{theorem:connections_solutions}: under Assumption~\ref{assumption:pof_minimal}, solutions to Reformulation~\eqref{problem:P_reformulated} (either global, local, or generalized) yield solutions to Problem~\eqref{problem:P_initial} (of same nature).
We discuss Assumption~\ref{assumption:pof_minimal} in Remark~\ref{remark:discussion_assumption_minimal}, and we prove Theorem~\ref{theorem:connections_solutions} in Section~\ref{section:results/proofs}.

\begin{assumption}
    \label{assumption:pof_minimal}
    In Framework~\ref{framework:PO},~$\chi$ is continuous; and~$X$ admits a partition~$X = \partitioncup_{i=1}^{N} X_i$, with~$N \in \Nbbinf^*$, such that for all~$x \in \cl(X)$ there exists a neighbourhood~$\Xcal \subseteq \Xbb$ of~$x$ such that~$\{i \in \llb1,N\rrb: X_i \cap \Xcal \neq \emptyset\}$ is finite, and~$\gamma_{|X_i}$ is uniformly continuous for all~$i \in \llb1,N\rrb$.
\end{assumption}

\begin{theorem}
    \label{theorem:connections_solutions}
    In Framework~\ref{framework:PO} under Assumption~\ref{assumption:pof_minimal}, the next three statements hold:
    \begin{enumerate}[label=\alph*),nosep,topsep=-\parskip]
        \item
            \label{theorem:connections_solutions/global}
            for all global solutions~$x^*$ to Reformulation~\eqref{problem:P_reformulated},~$\gamma(x^*)$ is a global solution to Problem~\eqref{problem:P_initial};
        \item
            \label{theorem:connections_solutions/local}
            for all local solutions~$x^* \in X$ to Reformulation~\eqref{problem:P_reformulated},~$\gamma(x^*)$ is a local solution to Problem~\eqref{problem:P_initial};
        \item
            \label{theorem:connections_solutions/generalized}
            for all generalized local solutions~$x^* \in \cl(X)$ to Reformulation~\eqref{problem:P_reformulated}, the set~$\ACC(\gamma;x^*)$ contains a generalized local solution to Problem~\eqref{problem:P_initial}.
    \end{enumerate}
\end{theorem}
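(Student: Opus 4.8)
The plan is to exhibit a single, carefully chosen accumulation point $y^* \in \ACC(\gamma;x^*)$ and verify directly that it satisfies Definition~\ref{definition:generalized_local_solutions} for Problem~\eqref{problem:P_initial}. Throughout I rely on the identities already used for parts~\ref{theorem:connections_solutions/global} and~\ref{theorem:connections_solutions/local}: $\Phi = \varphi \circ \gamma$ on $X$, the relation $\chi(\gamma(x)) = x$, and the key inequality $\Phi(\chi(y)) \leq \varphi(y)$ valid for every feasible $y \in Y \cap \Omega$ (such a $y$ is admissible for Subproblem~\eqref{problem:P_subproblem} at $\chi(y) \in X$). Unpacking the hypothesis, since $x^*$ is a generalized local solution to Reformulation~\eqref{problem:P_reformulated} and $\barrier{\Phi}{X} = \Phi$, there is a neighbourhood $\Xcal^*$ of $x^*$ with $\inf \Phi(\Xcal^*) = \inf \ACC(\Phi, x^*) =: \ell$.

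Next I would build the candidate $y^*$. Choose $x^k \in X \cap \Xcal^*$ with $x^k \to x^*$ and $\Phi(x^k) \to \ell$ (possible by definition of $\ell$ as the least cluster value of $\Phi$ at $x^*$). Invoking Assumption~\ref{assumption:pof_minimal}, the neighbourhood of $x^*$ provided there meets only finitely many pieces $X_i$, so a pigeonhole extraction confines a subsequence to one piece $X_{i_0}$; uniform continuity of $\gamma_{|X_{i_0}}$ then turns the convergent (hence Cauchy) sequence $(x^k)$ into a Cauchy sequence $(\gamma(x^k))$, which converges to some $y^* \in \Ybb$. By construction $y^* \in \ACC(\gamma;x^*)$ and $y^* \in \cl(\Omega)$, and continuity of $\chi$ together with $\chi(\gamma(x^k)) = x^k \to x^*$ yields $\chi(y^*) = x^*$. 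Moreover $\barrier{\varphi}{\Omega}(\gamma(x^k)) = \varphi(\gamma(x^k)) = \Phi(x^k) \to \ell$, so $\ell \in \ACC(\barrier{\varphi}{\Omega}, y^*)$ and hence $\inf \ACC(\barrier{\varphi}{\Omega}, y^*) =: m \leq \ell$.

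The verification is then a sandwich. Take $\Ycal^* \defequal \chi^{-1}(\Xcal^*)$, which is an open neighbourhood of $y^*$ because $\chi$ is continuous and $\chi(y^*) = x^* \in \Xcal^*$. For any $y \in \Ycal^*$: if $y \notin \Omega$ then $\barrier{\varphi}{\Omega}(y) = +\infty$; and if $y \in \Omega \cap Y$ then $\chi(y) \in X \cap \Xcal^*$, whence $\varphi(y) \geq \Phi(\chi(y)) \geq \inf \Phi(\Xcal^*) = \ell$. Thus $\inf \barrier{\varphi}{\Omega}(\Ycal^*) \geq \ell$. Since the reverse inequality $\inf \barrier{\varphi}{\Omega}(\Ycal^*) \leq m$ holds for any neighbourhood (every cluster value of the barrier at $y^*$ dominates the infimum over a neighbourhood eventually containing the approaching sequence), I obtain $\ell \leq \inf \barrier{\varphi}{\Omega}(\Ycal^*) \leq m \leq \ell$. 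Equality throughout gives $\inf \barrier{\varphi}{\Omega}(\Ycal^*) = \inf \ACC(\barrier{\varphi}{\Omega}, y^*)$, i.e.\ $y^*$ is a generalized local solution to Problem~\eqref{problem:P_initial}, completing the claim.

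The hard part will be the construction of $y^*$ in the second step: everything hinges on producing an accumulation point of $\gamma$ along a minimizing sequence. This is exactly where both halves of Assumption~\ref{assumption:pof_minimal} are indispensable — local finiteness of the partition $(X_i)$ to trap the sequence in a single piece, and uniform (not merely pointwise) continuity of $\gamma$ there to promote the convergence of $(x^k)$ into a Cauchy image sequence. I would need to argue that this Cauchy sequence actually admits a limit in $\Ybb$ (so that $\ACC(\gamma;x^*) \neq \emptyset$ along this sequence), and to take mild care with degenerate values, e.g.\ excluding $\varphi = \minusinfty$ on $\Omega$ and handling via a diagonal extraction the case where $\ell$ is approached but not attained as a cluster value.
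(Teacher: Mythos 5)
Your proposal is correct, and it reproduces the paper's overall architecture while selecting the key witness~$y^*$ by a genuinely different, more constructive mechanism. For parts~\refbis{theorem:connections_solutions}{global} and~\refbis{theorem:connections_solutions}{local}, the identities you cite ($\Phi = \varphi \circ \gamma$ on~$X$, $\chi(\gamma(x)) = x$, and $\varphi(y) \geq \Phi(\chi(y))$ for feasible~$y$) \emph{are} the paper's entire proof of those parts, and your sandwich's lower-bound step reproduces them verbatim; your neighbourhood~$\chi^{-1}(\Xcal^*)$ is exactly the paper's~$\Ybb(\Xcal^*)$ from Lemma~\ref{lemma:chi(neighbourhood)_neighbourhood}, and your "reverse inequality for any neighbourhood" is exactly Lemma~\ref{lemma:inf_neighbourhood_geq_inf_ACC}. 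The real divergence is in part~\refbis{theorem:connections_solutions}{generalized}: the paper first proves that~$\ACC(\gamma;x^*)$ is finite and nonempty (Lemma~\ref{lemma:ACC(gamma;x)_nonempty_finite}) and that~$\inf \ACC(\Phi;x^*) \geq \min_{y \in \ACC(\gamma;x^*)} \inf \ACC(\barrier{\varphi}{\Omega};y)$ via a chain of inclusions of cluster-value sets (Lemma~\ref{lemma:inf_ACC(Phi,x)_geq_inf_ACC(phi,y)}), and then takes~$y^*$ as an argmin over that finite set; you instead push a single minimizing sequence~$\Phi(x^k) \to \ell$ through the pigeonhole-plus-uniform-continuity argument (the same mechanism as inside Lemma~\ref{lemma:ACC(gamma;x)_nonempty_finite}) to obtain a limit~$y^*$ with~$\ell \in \ACC(\barrier{\varphi}{\Omega};y^*)$ directly. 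Your route is leaner — it needs neither the finiteness of~$\ACC(\gamma;x^*)$ nor the full Lemma~\ref{lemma:inf_ACC(Phi,x)_geq_inf_ACC(phi,y)} inclusion — at the price of requiring~$\ell \defequal \inf \ACC(\Phi;x^*)$ to be attained as a cluster value; your diagonal-extraction remark is precisely what fills that, since~$\ACC(\Phi;x^*)$ is closed in the compact~$\Rbbinf$. The two caveats you flag are real but are shared with (and glossed over by) the paper's own proof, so they are not gaps relative to it: convergence of the Cauchy sequence~$(\gamma(x^k))_{k \in \Nbb}$ needs completeness of~$\Ybb$ (or of~$\cl(Y \cap \Omega)$), which the paper uses silently when Lemma~\ref{lemma:ACC(gamma;x)_nonempty_finite} asserts the accumulation set \enquote{is a singleton}; and feasible points with~$\varphi = \minusinfty$ (possible only when~$\chi(y) \notin X$) would break the inequality~$\barrier{\varphi}{\Omega}(y) \geq \Phi(\chi(y))$ in your sandwich and in the paper's \enquote{Third} step alike.
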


\begin{remark}
    \label{remark:generalized_local_solutions}
    Let us illustrate Definition~\ref{definition:generalized_local_solutions} via two problems (with~$\Abb \defequal \Rbb$) it is suited for.
    First, let~$A \defequal \Rbb$ and~$f: a \mapsto a+1$ if~$a \geq 0$ and~$\textabs{a}$ otherwise.
    Second, let~$A \defequal \Rbb_+^*$ and~$f: a \mapsto a$.
    In both cases, the problem has no solution, but the point~$a^* \defequal 0$ has a remarkable property: the infimum of~$\barrier{f}{A}$ is found as the limit of~$(f(a^k))_{k \in \Nbb}$ for some well-chosen sequence~$(a^k)_{k \in \Nbb}$ converging to~$a^*$.
    Definition~\ref{definition:generalized_local_solutions} captures this property, and names~$a^*$ a \textit{generalized local solution} (in these examples, it is a global one).
\end{remark}

\begin{remark}
    \label{remark:discussion_assumption_minimal}
    Assumption~\ref{assumption:pof_minimal} is likely nonstringent in practice.
    First, the index function~$\chi$ is continuous when the partition set~$\Ybb(x)$ varies continuously with its index~$x \in \Xbb$.
    This is controllable since the partition of~$\Ybb$ is user-defined, and it holds when, e.g., the partition of~$\Ybb$ is made by fixing some variables as in Sections~\ref{section:intro/illustrative_example} and~\ref{section:example_optimal_control}.
    Second, the partition of~$X$ allows~$\gamma$ to be piecewise uniformly continuous.
    This rejects only instances where~$\gamma$ is locally unbounded, and is weaker than the Lipschitz continuity of~$\gamma$ that is commonly assumed in parametric optimization and bilevel optimization~\cite{DempeZemkoho2020bilevel,St18Parametric}.
\end{remark}

\subsection{The \pom and its related convergence certifications}
\label{section:results/algorithm}

We now discuss our method to exploit the \pof.
It consists in solving Reformulation~\eqref{problem:P_reformulated} using a \dfo algorithm, and apply Theorem~\ref{theorem:connections_solutions} to the (theoretically exact) solution it returns.
Accordingly, we name this method \textit{derivative-free partitioned optimization method} (\pom).
Before going through this section, let us stress that we introduce the \pom for cases where solving Reformulation~\eqref{problem:P_reformulated} is challenging.
The \pom is not needed when a solution to Reformulation~\eqref{problem:P_reformulated} is available via specific techniques, since Theorem~\ref{theorem:connections_solutions} is compatible with any solution.
Section~\ref{section:example_optimal_control} shows such an example, where Reformulation~\eqref{problem:P_reformulated} is solved analytically.
We also highlight that in practice, a \dfo algorithm returns only an approximation of a solution.
Yet, our numerical tests in Sections~\ref{section:applications} and~\ref{section:applications_heavy} prove that this discrepancy with our theoretical analysis does not alter the performance of the \pom.

Reformulation~\eqref{problem:P_reformulated} is a usual blackbox optimization problem: its variables space~$\Xbb$ likely has a low dimension (even when~$\Ybb$ has infinite dimension), and, for all~$x \in \Xbb$, the objective value~$\Phi(x)$ is computed on-the-fly by calling the oracle function~$\gamma$ at~$x$ and evaluating~$\Phi(x) \defequal \varphi(\gamma(x))$ (or~$\Phi(x) \defequal +\infty$ if~$\gamma(x)$ is not defined).
Then \dfo algorithms are suited for Reformulation~\eqref{problem:P_reformulated}, in particular those with a \textit{\covering step}~\cite{AuBoBo24Covering} which ensures convergence to local solutions.
Let us formalize the \covering operator in Definition~\ref{definition:covering_operator}.
We provide only one concise version, but~\cite{AuBoBo24Covering} discusses more tractable variants.

\begin{definition}
    \label{definition:covering_operator}
    The \textit{\covering operator} is the set-valued function~$\covering: \Xbb \times 2^{\Xbb} \to 2^{\Xbb}$ defined, for all~$(x,\Hcal) \in \Xbb \times 2^{\Xbb}$, by~$\covering(x,\Hcal) \defequal \argmax_{\textnorm{d} \leq 1} \dist(x+d,\Hcal)$, where~$\dist(x,\Hcal) \defequal \inf_{h \in \Hcal} \textnorm{x-h}$ for all~$\Hcal \neq \emptyset$ and, by convention,~$\dist(x,\emptyset) \defequal +\infty$.
\end{definition}

Given a point~$x \in \Xbb$ and a set~$\Hcal \subseteq \Xbb$ acting as a history of past points, the \covering operator seeks for a direction~$d$ that yields a new point~$x+d$ as far as possible from~$\Hcal$ within a ball of unitary radius centred at~$x$.
The \covering operator may be embedded into any \dfo algorithm as an additional \textit{\covering step} that applies, at all iterations~$k \in \Nbb$, the \covering operator at the current incumbent solution~$x^k$ and the current history~$\Hcal^k$ of past trial points.
We formalize this framework as Algorithm~\ref{algo:covering_framework}.
This ensures that a dense set of points is evaluated near all accumulation points~$x^* \in \acc((x^k)_{k \in \Nbb})$, which asserts their local optimality.
We state this claim in Proposition~\ref{proposition:cDSM_applied_to_Phi}, a rewording of~\cite[Theorem~2]{AuBoBo24Covering}.

\begin{algorithm}[!ht]
    \caption{Framework~\cite[Algorithm~2]{AuBoBo24Covering} to fit a \covering operator into any iterative algorithm.}
    \label{algo:covering_framework}
    \begin{algorithmic}
    \IState{1} \textbf{Initialization}:
        \IState{2}
            select~$x^0 \in X$ the initial incumbent solution;
            set~$\Hcal^0 \defequal \{x^0\}$ the initial trial points history;

        \IState{2}
            select an iterative optimization Algorithm~$\Acal$;

    \For{$k \in \Nbb$}
        \IState{2} \covering \textbf{step}:
            \IState{3}
                set~$\DcalC^k \defequal \covering(x^k,\Hcal^k)$;
                set~$\TcalC^k \defequal \pointplusset{x^k}{\DcalC^k}$;
                select~$\tC^k \in \argmin \Phi(\TcalC^k)$;

            \IState{3}
                if~$\Phi(\tC^k) < \Phi(x^k)$, then set~$t^k \defequal \tC^k$ and~$\TcalA^k \defequal \emptyset$ and go to the \update step;

        \IState{2} \textbf{core algorithm} \textbf{step}:
            \IState{3}
                select~$\TcalA^k \subseteq \Xbb$ a finite set of trial points according to the iterative scheme of Algorithm $\Acal$;

            \IState{3}
                select~$\tA^k \in \argmin \Phi(\TcalA^k)$;
                if~$\Phi(\tA^k) < \Phi(x^k)$, then set~$t^k \defequal \tA^k$, otherwise set~$t^k \defequal x^k$;

        \IState{2} \update \textbf{step}:
            \IState{3}
                set~$x^{k+1} \defequal t^k$;
                set~$\Hcal^{k+1} \defequal \Hcal^k \cup \TcalC^k \cup \TcalA^k$.

    \EndFor
    \end{algorithmic}
\end{algorithm}

\begin{proposition}[convergence analysis of Algorithm~\ref{algo:covering_framework}]
    \label{proposition:cDSM_applied_to_Phi}
    Assume that~$\Phi$ is bounded below with bounded sublevel sets and that~$X$ admits a partition into~$N \in \Nbbinf^*$ ample continuity sets of~$\Phi$.
    Then Algorithm~\ref{algo:covering_framework} solving Reformulation~\eqref{problem:P_reformulated} generates a sequence~$(x^k)_{k \in \Nbb}$ such that all~$x^* \in \acc((x^k)_{k \in \Nbb}) \neq \emptyset$ are generalized local solutions to Reformulation~\eqref{problem:P_reformulated}.
    Moreover, for all~$x^* \in \acc((x^k)_{k \in \Nbb})$, if~$x^* \in X$ and~$\Phi$ is lower semicontinuous at~$x^*$, then~$x^*$ is a local solution to Reformulation~\eqref{problem:P_reformulated}.
\end{proposition}

We now state our second result.
Theorem~\ref{theorem:solving_P} formalizes the \pom to solve Problem~\eqref{problem:P_initial}, by solving Reformulation~\eqref{problem:P_reformulated} (exactly) using any instance of Algorithm~\ref{algo:covering_framework} and applying Theorem~\ref{theorem:connections_solutions} to the returned solution.
This result relies on the additional Assumption~\ref{assumption:pof_additional} that slightly refines Assumption~\ref{assumption:pof_minimal}, as discussed in Remark~\ref{remark:discussion_assumption_additional}.
The proof of Theorem~\ref{theorem:solving_P} is given in Section~\ref{section:results/proofs}.

\begin{assumption}
    \label{assumption:pof_additional}
    In Framework~\ref{framework:PO};~$\chi(\Ycal)$ is bounded for all~$\Ycal \subseteq \Ybb$ bounded; and~$\varphi_{|\gamma(X)}$ is bounded below and has its sublevel sets bounded; and for all~$i \in \llb1,N\rrb$, the partition set~$X_i$ from Assumption~\ref{assumption:pof_minimal} is ample and such that~$\varphi_{|\gamma(X_i)}$ is continuous.
\end{assumption}

\begin{theorem}
    \label{theorem:solving_P}
    In Framework~\ref{framework:PO} under Assumptions~\ref{assumption:pof_minimal} and~\ref{assumption:pof_additional}, Algorithm~\ref{algo:covering_framework} solving Reformulation~\eqref{problem:P_reformulated} generates a sequence~$(x^k)_{k \in \Nbb}$ such that, for all~$x^* \in \acc((x^k)_{k \in \Nbb}) \neq \emptyset$, the set~$\ACC(\gamma;x^*)$ contains a generalized local solution to Problem~\eqref{problem:P_initial}.
    Moreover, for all~$x^* \in \acc((x^k)_{k \in \Nbb})$, if~$x^* \in X$ and~$\varphi$ is lower semicontinuous at all~$y \in \ACC(\gamma;x^*)$ and~$\ACC(\gamma;x^*) \subseteq \Omega$, then~$\gamma(x^*)$ is a local solution to Problem~\eqref{problem:P_initial}.
\end{theorem}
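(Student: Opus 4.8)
The plan is to prove the theorem in two movements: first establish that every accumulation point~$x^*$ of the sequence generated by Algorithm~\ref{algo:cdsm} is a generalized local solution to Reformulation~\eqref{problem:P_reformulated}, and then transfer this information back to Problem~\eqref{problem:P_initial} through Theorem~\ref{theorem:connections_solutions}. For the first movement I would invoke the convergence analysis of the \cdsm from~\cite{AuBoBo24Covering}, which guarantees that accumulation points are generalized local solutions provided the reformulated objective~$\Phi$ is bounded below, has bounded sublevel sets, and has its feasible set~$X$ covered by locally finitely many ample continuity sets of~$\Phi$. The role of Assumptions~\ref{assumption:pof_minimal} and~\ref{assumption:pof_additional} is precisely to certify these hypotheses.

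First I would check those hypotheses. Boundedness below of~$\Phi$ is immediate since~$\Phi_{|X} = \varphi \circ \gamma$ takes values in~$\varphi(\gamma(X))$, which is bounded below by Assumption~\ref{assumption:pof_additional}. For the boundedness of a sublevel set~$\{x \in X : \Phi(x) \leq \alpha\}$, I would use that~$\chi(\gamma(x)) = x$ for every~$x \in X$ (because~$\gamma(x) \in \Ybb(x)$), so that this sublevel set equals the image under~$\chi$ of~$\{y \in \gamma(X) : \varphi(y) \leq \alpha\}$; the latter is a bounded sublevel set of~$\varphi_{|\gamma(X)}$ and~$\chi$ maps bounded sets to bounded sets, both by Assumption~\ref{assumption:pof_additional}. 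Finally, the partition~$X = \partitioncup_{i=1}^{N} X_i$ of Assumption~\ref{assumption:pof_minimal} is locally finite and each~$X_i$ is ample by Assumption~\ref{assumption:pof_additional}; moreover~$\Phi_{|X_i} = \varphi_{|\gamma(X_i)} \circ \gamma_{|X_i}$ is continuous because~$\gamma_{|X_i}$ is uniformly continuous (Assumption~\ref{assumption:pof_minimal}) and~$\varphi_{|\gamma(X_i)}$ is continuous (Assumption~\ref{assumption:pof_additional}), so each~$X_i$ is a continuity set of~$\Phi$. The cited convergence theorem then applies; since the update step only moves to a point of strictly smaller~$\Phi$ value and~$x^0 \in X$, every incumbent stays in~$X$, so each accumulation point~$x^*$ lies in~$\cl(X)$ and is a generalized local solution to~\eqref{problem:P_reformulated}. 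Item~\ref{theorem:connections_solutions/generalized} of Theorem~\ref{theorem:connections_solutions} then yields the first claim.

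For the moreover part I would upgrade a generalized local solution~$x^* \in X$ into a genuine local solution of~\eqref{problem:P_reformulated}, so that item~\ref{theorem:connections_solutions/local} of Theorem~\ref{theorem:connections_solutions} applies and delivers that~$\gamma(x^*)$ is a local solution to~\eqref{problem:P_initial}. By the generalized local solution property it suffices to prove~$\Phi(x^*) \leq \inf \ACC(\barrier{\Phi}{X}, x^*)$. So I would fix a finite~$v \in \ACC(\barrier{\Phi}{X}, x^*)$, witnessed by a sequence~$x^k \to x^*$ with~$x^k \in X$ and~$\Phi(x^k) = \varphi(\gamma(x^k)) \to v$. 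Local finiteness lets me pass to a subsequence contained in a single~$X_{i_0}$; uniform continuity of~$\gamma_{|X_{i_0}}$ sends this Cauchy sequence to a Cauchy, hence accumulating, sequence~$\gamma(x^k)$, producing a point~$y^* \in \ACC(\gamma;x^*)$. Continuity of~$\chi$ gives~$\chi(y^*) = \lim \chi(\gamma(x^k)) = \lim x^k = x^*$, so~$y^* \in \Ybb(x^*)$; the hypotheses~$\ACC(\gamma;x^*) \subseteq \Omega$ and lower semicontinuity of~$\varphi$ at~$y^*$ give~$y^* \in \Omega$ and~$\varphi(y^*) \leq v$. Since~$x^* \in X$, the point~$\gamma(x^*)$ globally solves~\eqref{problem:P_subproblem} while~$y^*$ is feasible for it, whence~$\Phi(x^*) = \varphi(\gamma(x^*)) \leq \varphi(y^*) \leq v$. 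Taking the infimum over~$v$ completes the upgrade.

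The main obstacle is the step controlling the image sequence~$\gamma(x^k)$ inside the possibly infinite-dimensional space~$\Ybb$: without extra structure a sequence with bounded objective need not accumulate, and lower semicontinuity of~$\varphi$ is useless without a limit point at which to evaluate it. The resolution is the uniform continuity built into Assumption~\ref{assumption:pof_minimal}, which converts convergence of~$x^k$ into a Cauchy, hence accumulating, behaviour of~$\gamma(x^k)$ (tacitly relying on completeness of~$\Ybb$ to secure the accumulation point); this is also exactly what makes each~$X_i$ a continuity set of~$\Phi$ in the first movement. Secondary care points are the bookkeeping that keeps the incumbents feasible and a faithful check that the convergence result of~\cite{AuBoBo24Covering} is quoted under matching hypotheses.
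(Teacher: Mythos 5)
Your proposal is correct and follows essentially the same route as the paper: you verify the hypotheses of the \cdsm convergence analysis exactly as in Proposition~\ref{proposition:requirements_OK}, conclude the first claim via Theorem~\refbis{theorem:connections_solutions}{generalized}, and for the moreover part you establish the lower semicontinuity inequality~$\Phi(x^*) \leq \inf \ACC(\Phi;x^*)$ before invoking Theorem~\refbis{theorem:connections_solutions}{local}, which is precisely the paper's chain through Propositions~\ref{proposition:semicontinuity_Phi} and~\ref{proposition:cDSM_applied_to_Phi}. Your inline sequential argument (local finiteness, uniform continuity of~$\gamma_{|X_{i_0}}$, feasibility of~$y^*$ for Subproblem~\eqref{problem:P_subproblem} and global optimality of~$\gamma(x^*)$ there) reproduces the content of the paper's Lemmas~\ref{lemma:ACC(gamma;x)_nonempty_finite} and~\ref{lemma:inf_ACC(Phi,x)_geq_inf_ACC(phi,y)}, including the same tacit reliance on completeness of~$\Ybb$ that the paper's Lemma~\ref{lemma:ACC(gamma;x)_nonempty_finite} makes when it asserts that the accumulation sets are nonempty.
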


As Theorem~\ref{theorem:solving_P} shows, Algorithm~\ref{algo:covering_framework} solves only Reformulation~\eqref{problem:P_reformulated}, a problem it is well suited for.
Moreover, most of the literature on \dfo is applicable to Reformulation~\eqref{problem:P_reformulated}.
Examples are discussed in Remark~\ref{remark:grey_box_structure}.
Note also that the \pom is oblivious to whether the oracle function~$\gamma$ returns exactly global solutions as requested in Framework~\ref{framework:PO}.
Theorem~\ref{theorem:solving_P} describes only the case where Framework~\ref{framework:PO} is valid, but Sections~\ref{section:applications} and~\ref{section:applications_heavy} show that the \pom remains efficient when~$\gamma$ returns an approximation of a local solution.
We also foresee in Section~\ref{section:discussion/perspectives} a future work that will formally re-define the \pof with a less stringent definition of~$\gamma$.
Finally, remark that Theorem~\ref{theorem:solving_P} requires for Algorithm~\ref{algo:covering_framework} to run \textit{ad infinitum}.
This is a usual drawback in \dfo that, to our best knowledge, cannot be avoided.
In practice, we interrupt Algorithm~\ref{algo:covering_framework} according to some stopping criterion, at some iteration index~$\kappa \in \Nbb$, and we return the incumbent solution~$x^\kappa$ as an approximation of a solution to Reformulation~\eqref{problem:P_reformulated}.
Then the \pom returns~$\gamma(x^\kappa)$ as an approximation of a solution to Problem~\eqref{problem:P_initial}.

We conclude this section with the practical instance of Algorithm~\ref{algo:covering_framework} that we use in our numerical experiments in Sections~\ref{section:applications} and~\ref{section:applications_heavy}.
We instantiate~$\Acal$ as the popular \textit{direct search method} (\dsm)~\cite{AuDe2006,AuHa2017}.
The resulting \textit{\covering direct search method} (\cdsm)~\cite[Algorithm~1]{AuBoBo24Covering} is therefore a fit of the \covering step into the \dsm.
We formalize the \cdsm in Algorithm~\ref{algo:cdsm}, and discuss its parameters in Remark~\ref{remark:algo_parameters}.

\begin{algorithm}[!ht]
    \caption{\cdsm~\cite[Algorithm~1]{AuBoBo24Covering} instance of Algorithm~\ref{algo:covering_framework} considered in Sections~\ref{section:applications} and~\ref{section:applications_heavy}.}
    \label{algo:cdsm}
    \begin{algorithmic}
    \IState{1} \textbf{Initialization}:
        \IState{2}
            select~$(x^0,\delta^0) \in X \times \Rbb_+^*$ the initial (incumbent solution, poll radius) couple;
        \IState{2}
            set~$\Hcal^0 \defequal \{x^0\}$ the initial trial points history;
        \IState{2}
            select~$(\lambda,\upsilon) \in {]}0,1{[} \times [1,+\infty{[}$ the poll radius shrinking and expanding parameters;
    \For{$k \in \Nbb$}
        \IState{2} \covering \textbf{step}:
            \IState{3}
                set~$\DcalC^k \defequal \covering(x^k,\Hcal^k)$;
                set~$\TcalC^k \defequal \pointplusset{x^k}{\DcalC^k}$;
                select~$\tC^k \in \argmin \Phi(\TcalC^k)$;
            \IState{3}
                if~$\Phi(\tC^k) < \Phi(x^k)$, then set~$t^k \defequal \tC^k$ and~$\TcalS^k = \TcalP^k\defequal \emptyset$ and go to the \update step;
        \IState{2} \search \textbf{step}:
            \IState{3}
                select~$\DcalS^k \subseteq \Xbb$ empty or finite;
                if~$\TcalS^k \defequal \pointplusset{x^k}{\DcalS^k}$ is nonempty, then select~$\tS^k \in \argmin \Phi(\TcalS^k)$;
            \IState{3}
                if moreover~$\Phi(\tS^k) < \Phi(x^k)$, then set~$t^k \defequal \tS^k$ and~$\TcalP^k \defequal \emptyset$ and go to the \update step;
        \IState{2} \poll \textbf{step}:
            \IState{3}
                select~$\DcalP^k \subseteq \Xbb$ a positive basis of length~$\delta^k$;
                set~$\TcalP^k \defequal \pointplusset{x^k}{\DcalP^k}$;
                select~$\tP^k \in \argmin \Phi(\TcalP^k)$;
            \IState{3}
                if~$\Phi(\tP^k) < \Phi(x^k)$, then set~$t^k \defequal \tP^k$, otherwise set~$t^k \defequal x^k$;
        \IState{2} \update \textbf{step}:
            \IState{3}
                set~$x^{k+1} \defequal t^k$;
                set~$\delta^{k+1} \defequal \upsilon\delta^k$ if~$t^k \neq x^k$ or~$\delta^{k+1} \defequal \lambda\delta^k$ if~$t^k = x^k$;
                set~$\Hcal^{k+1} \defequal \Hcal^k \cup \TcalC^k \cup \TcalS^k \cup \TcalP^k$.
    \EndFor
    \end{algorithmic}
\end{algorithm}

\begin{remark}
    \label{remark:discussion_assumption_additional}
    Assumption~\ref{assumption:pof_additional} builds additional mild requirements to Assumption~\ref{assumption:pof_minimal}.
    First, in practice, a partition of~$\Ybb$ that makes the index function~$\chi$ continuous usually yields its additional requirements as well.
    For example, it is the case in all problems from the class we study in Sections~\ref{section:applications} and~\ref{section:applications_heavy}.
    Second, the requirements on~$\varphi_{|\gamma(X)}$ are standard, and moreover they are stated on~$\varphi_{|\gamma(X)}$ instead of~$\varphi$ directly (so even contexts where~$\varphi$ has unbounded sublevel sets may be compatible).
    Finally, the additional requirements on the partition sets~$(X_i)_{i=1}^{N}$ inherited from Assumption~\ref{assumption:pof_minimal} are a minimal safeguard against pathological behaviours~\cite{AuBoBo24Covering} that are unlikely to appear in practice.
\end{remark}

\begin{remark}
    \label{remark:grey_box_structure}
    The \pom is compatible with the notion of \textit{surrogate}~\cite{BoDeFrSeToTr99a} from \dfo.
    When~$\gamma(x)$ is obtained, for all~$x \in X$, by a numerical method solving~\eqref{problem:P_subproblem}, a surrogate is obtained by relaxing the precision of the method.
    It is also possible to order trial points according to a sensitivity analysis of Subproblem~\eqref{problem:P_subproblem} with respect to~$x$ via the generalized implicit function theorem~\cite{Ro91Implicit}.
    This echoes concerns in parametric optimization about how the parameter influences the solution~\cite{FeKaKr21Param,St18Parametric}.
\end{remark}

\begin{remark}
    \label{remark:algo_parameters}
    Sections~\ref{section:applications} and~\ref{section:applications_heavy} instantiate Algorithm~\ref{algo:cdsm} as follows.
    The point~$x^0$ depends on the instance, while we set~$\delta^0 \defequal 1$.
    We choose~$(\lambda,\upsilon) \defequal (\frac{1}{2},1)$ in Sections~\ref{section:applications/monovariable},~\ref{section:applications/radial},~\ref{section:applications/nonlinear},~\ref{section:applications_heavy/monovariable_heavy} and~\ref{section:applications_heavy/radial_heavy}, and~$(\lambda,\upsilon) \defequal (\frac{1}{2},2)$ in Section~\ref{section:applications_heavy/nonlinear_heavy}, and~$(\lambda,\upsilon) \defequal (\frac{3}{4},2)$ in Sections~\ref{section:applications/dim2} and~\ref{section:applications_heavy/dim2_heavy}.
    The \search step is skipped ($\DcalS^k \defequal \emptyset$) and the \poll step selects random orthogonal positive bases.
    The algorithm iterates until~$\delta^k < 1E^{-10}$.
\end{remark}

\subsection{Proofs of Theorems~\ref{theorem:connections_solutions} and~\ref{theorem:solving_P}}
\label{section:results/proofs}

First, Section~\ref{section:results/proof_theorems/lemmas} states some lemmas related to the \pof and Assumption~\ref{assumption:pof_minimal}.
Second, Section~\ref{section:results/proof_theorems/connections_solutions} proves Theorem~\ref{theorem:connections_solutions}.
Third, Section~\ref{section:results/proof/application_theorem} proves that the convergence analysis of Algorithm~\ref{algo:covering_framework} from Proposition~\ref{proposition:cDSM_applied_to_Phi} is applicable when Assumption~\ref{assumption:pof_additional} holds.
Finally, Section~\ref{section:results/proof/solving_P} proves Theorem~\ref{theorem:solving_P}.

\clearpage\newpage
\subsubsection{Technical lemmas on the problem, the framework and its minimal assumption}
\label{section:results/proof_theorems/lemmas}

This section formalizes a collection of lemmas about Problem~\eqref{problem:P_initial}, the \pof, and Assumption~\ref{assumption:pof_minimal}.

First, Lemma~\ref{lemma:inf_neighbourhood_geq_inf_ACC} is related to~$\barrier{\varphi}{\Omega}$, the extreme barrier variant of~$\varphi$ with respect to~$\Omega$, and~$\ACC(\barrier{\varphi}{\Omega};y)$, the set of accumulation values of~$\barrier{\varphi}{\Omega}$ at any point~$y \in \Ybb$.
This lemma states that the infimum of~$\barrier{\varphi}{\Omega}$ over any neighbourhood of any~$y \in \Ybb$ is below the lowest accumulation value of~$\barrier{\varphi}{\Omega}$ at~$y$.

\begin{lemma}
    \label{lemma:inf_neighbourhood_geq_inf_ACC}
    For all~$y \in \Ybb$ and all~$\Ycal \subseteq \Ybb$ neighbourhood of~$y$, it holds that~$\inf \barrier{\varphi}{\Omega}(\Ycal) \leq \inf \ACC(\barrier{\varphi}{\Omega};y)$.
\end{lemma}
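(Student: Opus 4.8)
The plan is to prove the inequality elementwise on the right-hand side. Writing $g \defequal \barrier{\varphi}{\Omega}$ for brevity, I would first reduce the claim to showing that $\inf g(\Ycal) \leq v$ for every $v \in \ACC(g;y)$, since taking the infimum over all such $v$ then yields $\inf g(\Ycal) \leq \inf \ACC(g;y)$. If $\ACC(g;y) = \emptyset$ the right-hand side equals $+\infty$ (under the convention $\inf \emptyset = +\infty$ in $\Rbbinf$) and the inequality is immediate, so I may assume $\ACC(g;y)$ is nonempty and fix an arbitrary element $v \in \ACC(g;y)$.

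Next I would unpack the definition of $\ACC(g;y)$: there exists a sequence $(y^k)_{k \in \Nbb} \in \Ybb^\Nbb$ converging to $y$ such that $v \in \acc((g(y^k))_{k \in \Nbb})$, that is, some subsequence $(g(y^{k_j}))_{j \in \Nbb}$ converges to $v$ in $\Rbbinf$. Since $(y^k)_{k \in \Nbb}$ converges to $y$ and $\Ycal$ is a neighbourhood of $y$ (so $y$ lies in the interior of $\Ycal$), the sequence eventually enters $\Ycal$; in particular $y^{k_j} \in \Ycal$ for all $j$ large enough. For each such index, $g(y^{k_j}) \in g(\Ycal)$, whence $\inf g(\Ycal) \leq g(y^{k_j})$. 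Passing to the limit $j \to +\infty$ along the extended ordering of $\Rbbinf$ gives $\inf g(\Ycal) \leq \lim_j g(y^{k_j}) = v$, which is the desired elementwise bound.

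The argument carries no genuine obstacle; the only point requiring care is the bookkeeping in $\Rbbinf$, namely allowing the accumulation value $v$ to be $\pm\infty$ and using the convention $\inf \emptyset = +\infty$. When $v = +\infty$ the bound is trivial; when $v = -\infty$ the tail values $g(y^{k_j})$ diverge to $-\infty$, so $\inf g(\Ycal) = -\infty$; and in the finite case the inequality $\inf g(\Ycal) \leq g(y^{k_j})$ survives the limit because the left-hand side is a fixed element of $\Rbbinf$ independent of $j$. These are all routine verifications, so the principal content of the lemma is simply the observation that a sequence attaining an accumulation value must eventually lie in any prescribed neighbourhood of its limit, which forces the function values witnessing that accumulation value into $g(\Ycal)$.
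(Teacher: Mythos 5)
Your proof is correct and follows essentially the same route as the paper's: pick a witnessing sequence for an accumulation value, observe its tail lies in $\Ycal$ so the infimum over $g(\Ycal)$ bounds the function values, and pass to the limit. The only difference is cosmetic—the paper takes the witnessing sequence directly in $\Ycal^\Nbb$ (implicitly using the tail argument you spell out), while you also make the $\pm\infty$ and empty-set bookkeeping explicit.
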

\begin{proof}
    Let~$y \in \Ybb$ and let~$\Ycal \subseteq \Ybb$ be a neighbourhood of~$y$.
    Let~$z \in \ACC(\barrier{\varphi}{\Omega};y)$.
    Let~$(y^k)_{k \in \Nbb} \in \Ycal^\Nbb$ converging to~$y$ with~$(\barrier{\varphi}{\Omega}(y^k))_{k \in \Nbb}$ converging to~$z$.
    Then,~$\inf \barrier{\varphi}{\Omega}(\Ycal) \leq \barrier{\varphi}{\Omega}(y^k)$ holds for all~$k \in \Nbb$, so at the limit,~$\inf \barrier{\varphi}{\Omega}(\Ycal) \leq z$ follows.
    This proves the claim since~$z$ was arbitrarily chosen in~$\ACC(\barrier{\varphi}{\Omega};y)$.
\end{proof}

Second, Lemma~\ref{lemma:chi_gamma_reciprocal} states a basic property of the \pof: the oracle function~$\gamma$ and the index function~$\chi$ induce reciprocal bijections between~$X$ and~$\gamma(X)$.

\begin{lemma}
    \label{lemma:chi_gamma_reciprocal}
    In Framework~\ref{framework:PO},~$\gamma$ is a bijection from~$X$ to~$\gamma(X)$ and its reciprocal is the restriction~$\chi_{|\gamma(X)}$.
\end{lemma}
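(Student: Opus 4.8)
The plan is to verify the two composition identities $\chi_{|\gamma(X)} \circ \gamma = \mathrm{id}_X$ and $\gamma \circ \chi_{|\gamma(X)} = \mathrm{id}_{\gamma(X)}$, which together express that $\chi_{|\gamma(X)}$ is a two-sided inverse of $\gamma$ and hence that $\gamma$ is a bijection from $X$ onto $\gamma(X)$. The single structural fact driving everything is that the oracle always lands inside the partition set indexing it: for every $x \in X$ we have $\gamma(x) \in \Gamma(x)$, and since $\Gamma(x)$ is the solution set of Subproblem~\eqref{problem:P_subproblem}, whose feasible domain is contained in $\Ybb(x)$, we get $\gamma(x) \in \Ybb(x)$.

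First I would establish that $\chi(\gamma(x)) = x$ for all $x \in X$. By definition $\chi(y)$ is the index of the unique partition set of $\Ybb = \partitioncup_{x \in \Xbb} \Ybb(x)$ that contains $y$; this is well-defined precisely because the sets $\Ybb(x)$ are pairwise disjoint and cover $\Ybb$. Applying this with $y = \gamma(x)$ and using $\gamma(x) \in \Ybb(x)$ from the previous paragraph, the unique set containing $\gamma(x)$ is $\Ybb(x)$, so its index is $x$, that is $\chi(\gamma(x)) = x$. This already yields $\chi_{|\gamma(X)} \circ \gamma = \mathrm{id}_X$, hence $\gamma$ is injective and therefore a bijection onto its image $\gamma(X)$.

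Second I would check the reverse composition. Take any $y \in \gamma(X)$ and write $y = \gamma(x)$ for some $x \in X$. Then $\chi(y) = \chi(\gamma(x)) = x$ by the identity just proved, so $\gamma(\chi(y)) = \gamma(x) = y$; this gives $\gamma \circ \chi_{|\gamma(X)} = \mathrm{id}_{\gamma(X)}$. Combining the two composition identities shows that $\chi_{|\gamma(X)}$ is the reciprocal of $\gamma$, which completes the argument.

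There is no genuine obstacle here: the statement is a direct consequence of $\gamma(x) \in \Ybb(x)$ together with the disjointness of the partition. The only points meriting care are to justify that $\chi$ is well-defined on all of $\gamma(X)$ (equivalently that $\gamma(X) \subseteq \Ybb$, which is immediate since $\gamma$ takes values in $Y \subseteq \Ybb$), and to note that injectivity of $\gamma$ is exactly the observation that two distinct indices cannot share a point, because their partition sets are disjoint.
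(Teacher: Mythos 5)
Your proof is correct and follows essentially the same route as the paper: both arguments rest on the single fact that $\gamma(x) \in \Ybb(x)$ together with the pairwise disjointness of the partition sets, yielding $\chi(\gamma(x)) = x$ and $\gamma(\chi(y)) = y$. The only cosmetic difference is that you derive injectivity from the left-inverse identity $\chi_{|\gamma(X)} \circ \gamma = \mathrm{id}_X$, whereas the paper checks it directly from disjointness; the content is identical.
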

\begin{proof}
    For all pairs~$(x_1,x_2) \in X^2$ with~$x_1 \neq x_2$, it holds that~$\gamma(x_1) \in \Ybb(x_1)$ and~$\gamma(x_2) \in \Ybb(x_2)$ by construction of~$\gamma$, and~$\Ybb(x_1) \cap \Ybb(x_2) = \emptyset$ by definition of the partition.
    It follows that~$\gamma(x_1) \neq \gamma(x_2)$.
    Hence,~$\gamma$ is an injection from~$X$ to~$\gamma(X)$, and thus a bijection since the surjectivity is clear.
    Also, for all~$x \in X$, the inclusion~$\gamma(x) \in \Ybb(x)$ raises~$\chi(\gamma(x)) = x$ by definition of~$\chi$.
    Finally, for all~$y \in \gamma(X)$, there exists a unique~$x \in X$ satisfying~$y = \gamma(x)$, so~$\chi(y) = \chi(\gamma(x)) = x$ and then~$\gamma(\chi(y)) = \gamma(x) = y$.
    This proves the mutual reciprocity of~$\gamma$ and~$\chi_{|\gamma(X)}$.
\end{proof}

Third, Lemma~\ref{lemma:chi(neighbourhood)_neighbourhood} proves that the union of all partition sets of~$\Ybb$ associated to indices in a neighbourhood of any~$x \in \Xbb$ forms a neighbourhood of all~$y \in \Ybb(x)$.
We use it, in our proof of Theorem~\ref{theorem:connections_solutions}, to map neighbourhoods of solutions to Reformulation~\eqref{problem:P_reformulated} to neighbourhoods of solutions to Problem~\eqref{problem:P_initial}.

\begin{lemma}
    \label{lemma:chi(neighbourhood)_neighbourhood}
    Under Assumption~\ref{assumption:pof_minimal}, for all~$x \in \Xbb$ and all~$\Xcal \subseteq \Xbb$ neighbourhood of~$x$, the set~$\Ybb(\Xcal)$ is a neighbourhood of all~$y \in \Ybb(x)$.
\end{lemma}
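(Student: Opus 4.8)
The plan is to reduce the statement to a one-line consequence of the continuity of~$\chi$ supplied by Assumption~\ref{assumption:pof_minimal}. The key observation is that~$\Ybb(\Xcal) \defequal \partitioncup_{x' \in \Xcal} \Ybb(x')$ is precisely the preimage~$\chi^{-1}(\Xcal)$. Indeed, by the defining property of the index function~$\chi$, a point~$y \in \Ybb$ lies in~$\Ybb(x')$ if and only if~$\chi(y) = x'$; hence~$\Ybb(\Xcal) = \{y \in \Ybb : \chi(y) \in \Xcal\} = \chi^{-1}(\Xcal)$. So first I would record this identity explicitly, since everything else follows from it.

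Next I would invoke continuity. Because~$\Xcal$ is a neighbourhood of~$x$, its interior~$\int(\Xcal)$ is an open set with~$x \in \int(\Xcal)$. Assumption~\ref{assumption:pof_minimal} guarantees that~$\chi$ is continuous, so~$\chi^{-1}(\int(\Xcal))$ is open in~$\Ybb$; moreover, from the identity above together with~$\int(\Xcal) \subseteq \Xcal$, this open set satisfies~$\chi^{-1}(\int(\Xcal)) \subseteq \chi^{-1}(\Xcal) = \Ybb(\Xcal)$.

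Finally I would conclude pointwise. Fixing any~$y \in \Ybb(x)$, one has~$\chi(y) = x \in \int(\Xcal)$, so~$y \in \chi^{-1}(\int(\Xcal))$. Thus~$y$ lies in an open subset of~$\Ybb(\Xcal)$, i.e.~$y \in \int(\Ybb(\Xcal))$, which is exactly the assertion that~$\Ybb(\Xcal)$ is a neighbourhood of~$y$; since~$y$ was arbitrary in~$\Ybb(x)$, this settles the lemma. There is no genuine obstacle here: the statement is a routine point-set topology argument, and the only two things to get right are the set-theoretic identity~$\Ybb(\Xcal) = \chi^{-1}(\Xcal)$ and the choice of the correct hypothesis from Assumption~\ref{assumption:pof_minimal}, namely the continuity of~$\chi$ (the partition and uniform-continuity-of~$\gamma$ clauses play no role here).
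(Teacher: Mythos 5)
Your proof is correct and follows essentially the same route as the paper's: both rest on the continuity of~$\chi$ from Assumption~\ref{assumption:pof_minimal} together with the identity~$\Ybb(\Xcal) = \chi^{-1}(\Xcal)$ (which the paper uses implicitly via~$\chi(\Ycal) \subseteq \Xcal \Rightarrow \Ycal \subseteq \Ybb(\Xcal)$). The only cosmetic difference is that you invoke continuity globally (preimages of open sets are open, applied to~$\int(\Xcal)$), whereas the paper invokes it pointwise at each~$y \in \Ybb(x)$; these are interchangeable here.
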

\begin{proof}
    Consider that Assumption~\ref{assumption:pof_minimal} holds.
    Let~$x \in \Xbb$, and~$\Xcal \subseteq \Xbb$ be a neighbourhood of~$x$.
    For all~$y \in \Ybb(x)$, it holds that~$\chi(y) = x \in \Xcal$ by definition of~$\chi$, and by continuity of~$\chi$ there exists an open set~$\Ycal \subseteq \Ybb$ containing~$y$ and such that~$\chi(\Ycal) \subseteq \Xcal$, so~$y \in \Ycal \subseteq \Ybb(\Xcal)$.
    This proves the desired claim.
\end{proof}

Then, Lemma~\ref{lemma:ACC(gamma;x)_nonempty_finite} highlights several key properties resulting from the completeness of~$\Ybb$ and the uniform continuity of~$\gamma$ on each partition set of~$X$ and from the finite number of partition sets of~$X$ near any~$x \in \cl(X)$, required by Assumption~\ref{assumption:pof_minimal}.
We prove that these two properties imply that~$\ACC(\gamma;x)$ has finitely many elements, which all belong to~$\Ybb(x)$, and that any sequence in~$X$ converging to~$x$ has its accumulation values through~$\gamma$ included in~$\ACC(\gamma;x)$.
In our proof of Theorem~\ref{theorem:connections_solutions}, this lemma allows us to handle the discontinuities of~$\gamma$ at any solution~$x^*$ to Reformulation~\eqref{problem:P_reformulated} identified by Algorithm~\ref{algo:covering_framework} as an accumulation value of the sequence~$(x^k)_{k \in \Nbb}$ it generates.

\begin{lemma}
    \label{lemma:ACC(gamma;x)_nonempty_finite}
    Under Assumption~\ref{assumption:pof_minimal}, for all~$x \in \cl(X)$, the set~$\ACC(\gamma;x)$ is a finite subset of~$\Ybb(x)$, and for all~$(x^k)_{k \in \Nbb} \in X^\Nbb$ converging to~$x$, it holds that~$\emptyset \neq \acc((\gamma(x^k))_{k \in \Nbb}) \subseteq \ACC(\gamma;x)$.
\end{lemma}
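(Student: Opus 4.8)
The plan is to check the chain of inclusions and the finiteness in turn, using the local finiteness of the partition from Assumption~\ref{assumption:pof_minimal} to tame the accumulation points. Fix $x \in \cl(X)$ and a sequence $(x^k)_{k \in \Nbb} \in X^\Nbb$ converging to $x$. The inclusion $\acc((\gamma(x^k))_{k \in \Nbb}) \subseteq \ACC(\gamma;x)$ is immediate, since $\ACC(\gamma;x)$ is by definition the union of such accumulation sets over all sequences of $X$ converging to $x$. By Assumption~\ref{assumption:pof_minimal} I would fix a neighbourhood $\Xcal$ of $x$ for which $I \defequal \{i \in \llb1,N\rrb : X_i \cap \Xcal \neq \emptyset\}$ is finite and $\gamma_{|X_i}$ is uniformly continuous for every $i$. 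As $x^k \to x$, all but finitely many $x^k$ lie in $X \cap \Xcal = \partitioncup_{i \in I}(X_i \cap \Xcal)$, so since $I$ is finite a pigeonhole argument extracts a subsequence contained in a single $X_i$.

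To obtain $\acc((\gamma(x^k))_{k \in \Nbb}) \neq \emptyset$, I would observe that this subsequence, being convergent, is Cauchy, and that a uniformly continuous map sends Cauchy sequences to Cauchy sequences; hence its image under $\gamma_{|X_i}$ is Cauchy in $\Ybb$, and therefore converges, its limit being an accumulation point of $(\gamma(x^k))_{k \in \Nbb}$.

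For finiteness, I would attach to each $z \in \ACC(\gamma;x)$ an index $i \in I$: writing $z$ as a limit of $\gamma$ along some sequence of $X$ converging to $x$ and applying the same pigeonhole reduction, $z$ is the limit of $\gamma$ along a sequence lying in a single $X_i$. The limit attached to a fixed $i$ is unique, for if two sequences of $X_i$ converging to $x$ had $\gamma$-limits $z$ and $z'$, their interleaving would be a convergent (hence Cauchy) sequence of $X_i$ whose uniformly continuous image is Cauchy, forcing $z = z'$. Hence $z \mapsto i$ injects $\ACC(\gamma;x)$ into the finite set $I$, so $\ACC(\gamma;x)$ is finite. For the last inclusion, each $\gamma(x^k) \in \Gamma(x^k) \subseteq \Ybb(x^k) \cap \Omega$ and $\gamma(x^k) \in Y$ by Framework~\ref{framework:PO}, so any limit $z$ lies in $\cl(Y \cap \Omega)$; moreover $\chi(\gamma(x^k)) = x^k$ by Lemma~\ref{lemma:chi_gamma_reciprocal}, so continuity of $\chi$ yields $\chi(z) = x$, that is $z \in \Ybb(x)$.

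The main obstacle is the nonemptiness claim: it does not follow from the partition structure alone, but from turning the Cauchy image into a genuine limit, which requires completeness of $\Ybb$ (finite-dimensionality of $\Xbb$ is of no help here, since $\gamma$ ranges in the possibly infinite-dimensional $\Ybb$). The interleaving argument securing uniqueness, and thus finiteness, is the other step to handle with care, although it is routine once uniform continuity is available.
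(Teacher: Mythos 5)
Your proof is correct and follows essentially the same route as the paper's: localize via the neighbourhood~$\Xcal$ from Assumption~\ref{assumption:pof_minimal}, pigeonhole onto the finitely many pieces~$X_i$ meeting~$\Xcal$, use uniform continuity of~$\gamma_{|X_i}$ (Cauchy maps to Cauchy) to obtain a unique limit per piece, and combine~$\gamma(X) \subseteq Y \cap \Omega$ with continuity of~$\chi$ for the inclusion~$\ACC(\gamma;x) \subseteq \cl(Y \cap \Omega) \cap \Ybb(x)$. Your closing remark that nonemptiness requires completeness of~$\Ybb$ is well taken: the paper's proof relies on the same fact silently when it asserts that each per-piece set of accumulation points is a singleton, even though~$\Ybb$ is only assumed to be a normed space.
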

\begin{proof}
    Consider that Assumption~\ref{assumption:pof_minimal} holds.
    Let~$x \in \cl(X)$ and let~$\Xcal \subseteq \Xbb$ be the neighbourhood of~$x$ given by Assumption~\ref{assumption:pof_minimal}, so~$\Ical \defequal \{i \in \llb1,N\rrb : X_i \cap \Xcal \neq \emptyset\}$ is finite.
    For all~$i \in \Ical$, by completeness of~$\Ybb$ and uniform continuity of~$\gamma_{|X_i}$, the set~$\cup_{x^k \to x, (x^k)_{k \in \Nbb} \in X_i^\Nbb}~ \acc((\gamma(x^k))_{k \in \Nbb})$ is a singleton in~$\Ybb$, and its unique element denoted by~$y_i$ lies in~$\Ybb(x)$ by continuity of~$\chi$.
    Then,~$\ACC(\gamma;x) = \{y_i,~ i \in \Ical\}$ is a finite subset of~$\Ybb(x)$.
    Now, let~$(x^k)_{k \in \Nbb} \in (X \cap \Xcal)^\Nbb$ converging to~$x$ and, for all~$i \in \llb1,N\rrb$, define~$K_i \defequal \{k \in \Nbb: x^k \in X_i\}$.
    Then,~$\Ical_+ \defequal \{i \in \Ical: K_i$ is infinite$\}$ is nonempty since we have~$\Nbb = \partitioncup_{i \in \llb1,N\rrb} K_i = \partitioncup_{i \in \Ical} K_i$.
    We deduce that~$\emptyset \neq \acc((\gamma(x^k))_{k \in \Nbb}) = \{y_i,~ i \in \Ical_+\} \subseteq \{y_i,~ i \in \Ical\} = \ACC(\gamma;x)$, as claimed.
\end{proof}

Now, Lemma~\ref{lemma:equality_acc_phi_gamma} states a technical equality about how discontinuities of~$\gamma$ at any~$x \in \cl(X)$ influence the possible accumulation values of~$\varphi \circ \gamma$ at~$x$.
It is an intermediate result to prove Lemma~\ref{lemma:inf_ACC(Phi,x)_geq_inf_ACC(phi,y)} below.

\begin{lemma}
    \label{lemma:equality_acc_phi_gamma}
    Under Assumption~\ref{assumption:pof_minimal}, for all~$x \in \cl(X)$, it holds that
    \begin{equation*}
        {\bigcup}_{x^k \to x, (x^k)_{k \in \Nbb} \in X^\Nbb} ~ \acc((\varphi(\gamma(x^k))_{k \in \Nbb}))
        \quad = \quad
        {\bigcup}_{y \in \ACC(\gamma;x)} ~ {\bigcup}_{y^k \to y, (y^k)_{k \in \Nbb} \in \gamma(X)^\Nbb} ~ \acc((\varphi(y^k))_{k \in \Nbb}).
    \end{equation*}
\end{lemma}
\begin{proof}
    Consider that Assumption~\ref{assumption:pof_minimal} holds, and let~$x \in \cl(X)$.
    We first prove the direct inclusion.
    Let~$z \in \cup_{x^k \to x, (x^k)_{k \in \Nbb} \in X^\Nbb} ~ \acc((\varphi(\gamma(x^k))_{k \in \Nbb})$ and~$(x^k)_{k \in \Nbb} \in X^\Nbb$ converging to~$x$ with~$(\varphi(\gamma(x^k)))_{k \in \Nbb}$ converging to~$z$.
    By Lemma~\ref{lemma:ACC(gamma;x)_nonempty_finite},~$\acc((\gamma(x^k))_{k \in \Nbb}) \neq \emptyset$ so there exists~$K \subseteq \Nbb$ infinite such that~$(\gamma(x^k))_{k \in K}$ converges to a limit~$y \in \ACC(\gamma;x)$.
    By re-labelling this sequence by~$(y^k)_{k \in \Nbb} \defequal (\gamma(x^k))_{k \in K} \in \gamma(X)^\Nbb$, we observe that~$z \in \acc((\varphi(y^k))_{k \in \Nbb}) \subseteq \cup_{y \in \ACC(\gamma;x)} \cup_{y^k \to y, (y^k)_{k \in \Nbb} \in \gamma(X)^\Nbb} \acc((\varphi(y^k))_{k \in \Nbb})$ as claimed.
    We now prove the reciprocal inclusion.
    Let~$y \in \ACC(\gamma;x)$ and let~$z \in {\bigcup}_{y^k \to y, (y^k)_{k \in \Nbb} \in \gamma(X)^\Nbb} ~ \acc((\varphi(y^k))_{k \in \Nbb})$.
    There exists~$(y^k)_{k \in \Nbb} \in \gamma(X)^\Nbb$ converging to~$y$ with~$(\varphi(y^k))_{k \in \Nbb}$ converging to~$z$.
    By continuity of~$\chi$ and Lemma~\ref{lemma:ACC(gamma;x)_nonempty_finite},~$(x^k)_{k \in \Nbb} \defequal (\chi(y^k))_{k \in \Nbb} \in X^\Nbb$ converges to~$\chi(y) = x$.
    Then,~$(\varphi(\gamma(x^k)))_{k \in \Nbb} = (\varphi(y^k))_{k \in \Nbb}$ converges to~$z$ by Lemma~\ref{lemma:chi_gamma_reciprocal}, so~$z \in {\bigcup}_{x^k \to x, (x^k)_{k \in \Nbb} \in X^\Nbb} ~ \acc((\varphi(\gamma(x^k))_{k \in \Nbb})$ as claimed.
\end{proof}

Finally, Lemma~\ref{lemma:inf_ACC(Phi,x)_geq_inf_ACC(phi,y)} is a key result of our analysis to highlight which element of~$\ACC(\gamma;x^*)$ is a generalized local solution to Problem~\eqref{problem:P_initial} in our proof of Theorem~\refbis{theorem:connections_solutions}{generalized}.

\begin{lemma}
    \label{lemma:inf_ACC(Phi,x)_geq_inf_ACC(phi,y)}
    Under Assumption~\ref{assumption:pof_minimal},~$\inf \ACC(\Phi;x) \geq \min_{y \in \ACC(\gamma;x)} \inf \ACC(\barrier{\varphi}{\Omega};y)$ holds for all~$x \in \cl(X)$.
\end{lemma}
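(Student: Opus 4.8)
The plan is to show that every accumulation value of $\Phi$ at $x$ dominates the inner quantity for at least one element of $\ACC(\gamma;x)$, and then to take the infimum over these accumulation values. I would first invoke Lemma~\ref{lemma:ACC(gamma;x)_nonempty_finite} to note that $\ACC(\gamma;x)$ is nonempty and finite, so that the minimum on the right-hand side is well-defined. Then I fix an arbitrary $z \in \ACC(\Phi;x)$ (this set is nonempty since $\Rbbinf$ is sequentially compact, and the $x \in \cl(X)$ hypothesis provides sequences converging to $x$), and it suffices to prove $z \geq \min_{y \in \ACC(\gamma;x)} \inf \ACC(\barrier{\varphi}{\Omega};y)$. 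If $z = +\infty$ this is immediate, so the substantive case is $z < +\infty$.

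In that case I would pick a sequence $(x^k)_{k \in \Nbb} \in \Xbb^\Nbb$ converging to $x$ with $\Phi(x^k) \to z$, and pass to a subsequence along which $\Phi(x^k)$ is finite (possible since $z < +\infty$). Finiteness of $\Phi(x^k)$ forces $x^k \in X$, so $\gamma(x^k)$ is defined and lies in $\gamma(X) \subseteq Y \cap \Omega$; in particular $\barrier{\varphi}{\Omega}(\gamma(x^k)) = \varphi(\gamma(x^k)) = \Phi(x^k)$ for every index of the subsequence, which is the identity $\barrier{\varphi}{\Omega} \circ \gamma = \Phi$ on $X$.

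The key step is to transfer the accumulation from the $x$-space to the $y$-space. Applying Lemma~\ref{lemma:ACC(gamma;x)_nonempty_finite} to this tail sequence $(x^k)_{k \in \Nbb} \in X^\Nbb$ yields a point $y^* \in \acc((\gamma(x^k))_{k \in \Nbb}) \subseteq \ACC(\gamma;x)$, together with a further subsequence $(\gamma(x^{k_j}))_j$ converging to $y^*$. Along this subsequence $\barrier{\varphi}{\Omega}(\gamma(x^{k_j})) = \Phi(x^{k_j}) \to z$, so $(\gamma(x^{k_j}))_j$ is a sequence of $\Ybb$ converging to $y^*$ whose image under $\barrier{\varphi}{\Omega}$ converges to $z$. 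By the very definition of $\ACC(\barrier{\varphi}{\Omega};\cdot)$, this gives $z \in \ACC(\barrier{\varphi}{\Omega};y^*)$, whence $z \geq \inf \ACC(\barrier{\varphi}{\Omega};y^*) \geq \min_{y \in \ACC(\gamma;x)} \inf \ACC(\barrier{\varphi}{\Omega};y)$. Taking the infimum over all $z \in \ACC(\Phi;x)$ then yields the claim.

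The only genuine difficulty is the nonemptiness of $\acc((\gamma(x^k))_{k \in \Nbb})$: because $\Ybb$ may be infinite-dimensional, boundedness alone would not secure a convergent subsequence, so I would rely entirely on Lemma~\ref{lemma:ACC(gamma;x)_nonempty_finite}, whose uniform-continuity argument supplies both the existence of $y^*$ and its membership in the finite set $\ACC(\gamma;x)$. Everything else is the routine bookkeeping of nested subsequences together with the identity $\barrier{\varphi}{\Omega} \circ \gamma = \Phi$ restricted to $X$.
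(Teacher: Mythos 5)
Your proof is correct and follows essentially the same route as the paper's: both arguments reduce the claim to showing that every (finite) element of $\ACC(\Phi;x)$ lies in $\ACC(\barrier{\varphi}{\Omega};y)$ for some $y \in \ACC(\gamma;x)$, using Lemma~\ref{lemma:ACC(gamma;x)_nonempty_finite} to transfer accumulation points from $\Xbb$ to $\Ybb$ together with the identity $\Phi = \barrier{\varphi}{\Omega} \circ \gamma$ on $X$. The paper packages this as a chain of set inclusions ending in $\ACC(\Phi;x) \subseteq \bigcup_{y \in \ACC(\gamma;x)} \ACC(\barrier{\varphi}{\Omega};y)$, whereas you argue element-wise with explicit subsequence extraction (and handle the off-$X$, value-$+\infty$ case separately, a point the paper glosses over), but the substance is identical.
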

\begin{proof}
    Consider that Assumption~\ref{assumption:pof_minimal} holds and let~$x \in \cl(X)$.
    It holds that
    \begin{equation*}\begin{array}{rcllll}
        \ACC(\Phi_{|X};x)
            &     =     & \multicolumn{3}{l}{\ACC(\varphi \circ \gamma;x)} & \text{(definition of~$\Phi$)} \\
            &     =     & \multicolumn{3}{l}{{\bigcup}_{x^k \to x,~ (x^k)_{k \in \Nbb} \in X^\Nbb} \quad \acc((\varphi(\gamma(x^k)))_{k \in \Nbb})} & \text{(definition of~$\ACC$)} \\
            &     =     & {\bigcup}_{y \in \ACC(\gamma;x)} & {\bigcup}_{y^k \to y,~ (y^k)_{k \in \Nbb} \in \gamma(X)^\Nbb} & \acc((\varphi(y^k))_{k \in \Nbb}) & \text{(Lemma~\ref{lemma:equality_acc_phi_gamma})} \\
            & \subseteq & {\bigcup}_{y \in \ACC(\gamma;x)} & {\bigcup}_{y^k \to y,~ (y^k)_{k \in \Nbb} \in (Y \cap \Omega)^\Nbb} & \acc((\varphi(y^k))_{k \in \Nbb}) & (\gamma(X) \subset Y \cap \Omega) \\
            & =  & {\bigcup}_{y \in \ACC(\gamma;x)} & {\bigcup}_{y^k \to y,~ (y^k)_{k \in \Nbb} \in (Y \cap \Omega)^\Nbb} & \acc((\barrier{\varphi}{\Omega}(y^k))_{k \in \Nbb}) & (\varphi = \barrier{\varphi}{\Omega} ~\text{over}~ \Omega) \\
            & \subseteq & {\bigcup}_{y \in \ACC(\gamma;x)} & {\bigcup}_{y^k \to y,~ (y^k)_{k \in \Nbb} \in \Ybb^\Nbb} & \acc((\barrier{\varphi}{\Omega}(y^k))_{k \in \Nbb}) & (Y \cap \Omega \subseteq \Ybb) \\
            &     =     & {\bigcup}_{y \in \ACC(\gamma;x)} & \multicolumn{2}{l}{\ACC(\barrier{\varphi}{\Omega};y)} & \text{(definition of~$\ACC$)}.
    \end{array}\end{equation*}
    It follows that~$\inf \ACC(\Phi_{|X};x) \geq \inf_{y \in \ACC(\gamma;x)} \inf \ACC(\barrier{\varphi}{\Omega};y)$, and~$\inf \ACC(\Phi;x) = \inf \ACC(\Phi_{|X};x)$ since~$\Phi(x') = +\infty$ for all~$x' \notin X$.
    The proof is completed by finiteness of~$\ACC(\gamma;x)$ from Lemma~\ref{lemma:ACC(gamma;x)_nonempty_finite}, which ensures that~$\inf_{y \in \ACC(\gamma;x)} \inf \ACC(\barrier{\varphi}{\Omega};y) = \min_{y \in \ACC(\gamma;x)} \inf \ACC(\barrier{\varphi}{\Omega};y)$.
\end{proof}

\subsubsection{Proof of Theorem~\ref{theorem:connections_solutions}}
\label{section:results/proof_theorems/connections_solutions}

We now prove Theorem~\ref{theorem:connections_solutions}.
The first two parts verify that given a usual (global or local) solution~$x^*$ to Reformulation~\eqref{problem:P_reformulated}, the element~$y^* \defequal \gamma(x^*)$ is a usual (global or local) solution to Problem~\eqref{problem:P_initial}.
The global case is direct, while the local case uses Lemma~\ref{lemma:chi(neighbourhood)_neighbourhood} to find a neighbourhood of~$y^*$ compatible with the definition of local optimality.
The case where~$x^*$ is a generalized local solution to Reformulation~\eqref{problem:P_reformulated} works similarly to the usual local case, except that it involves one more step to define~$y^*$.
We use Lemma~\ref{lemma:inf_ACC(Phi,x)_geq_inf_ACC(phi,y)} to find~$y^* \in \ACC(\gamma;x^*)$ that is a generalized local solution to Problem~\eqref{problem:P_initial}.

\begin{proof}
    Consider Framework~\ref{framework:PO} under Assumption~\ref{assumption:pof_minimal}.

    Let~$x^*$ be a global solution to Reformulation~\eqref{problem:P_reformulated} (so~$x^* \in X$ so~$\gamma(x^*) \in \Omega$ exists).
    For all~$y \in \Omega$, it holds that~$\varphi(y) \geq \Phi(\chi(y)) \geq \Phi(x^*) = \varphi(\gamma(x^*))$.
    Thus,~$\gamma(x^*)$ is a global solution to Problem~\eqref{problem:P_initial}, which proves Theorem~\refbis{theorem:connections_solutions}{global}.

    Let~$x^* \in X$ (so~$\gamma(x^*) \in \Omega$ exists) be a local solution to Reformulation~\eqref{problem:P_reformulated}.
    Then there exists a neighbourhood~$\Xcal^* \subseteq \Xbb$ of~$x^*$ such that~$\Phi(x) \geq \Phi(x^*)$ for all~$x \in \Xcal^*$, and by Lemma~\ref{lemma:chi(neighbourhood)_neighbourhood} the set~$\Ybb(\Xcal^*)$ is a neighbourhood of~$\gamma(x^*)$, which moreover satisfies~$\chi(y) \in \Xcal^*$ for all~$y \in \Ybb(\Xcal^*)$.
    Thus, for all~$y \in \Ybb(\Xcal^*) \cap \Omega$, it follows that~$\varphi(y) \geq \Phi(\chi(y)) \geq \Phi(x^*) = \varphi(\gamma(x^*))$.
    Thus,~$\gamma(x^*)$ is a local solution to Problem~\eqref{problem:P_initial}, which proves Theorem~\refbis{theorem:connections_solutions}{local}.

    Let~$x^* \in \cl(X)$ be a generalized local solution to Reformulation~\eqref{problem:P_reformulated}.
    Then~$\ACC(\gamma;x^*)$ is finite nonempty by Lemma~\ref{lemma:ACC(gamma;x)_nonempty_finite}, so we may select an element~$y^* \in \argmin_{y \in \ACC(\gamma;x^*)} \inf \ACC(\barrier{\varphi}{\Omega};y)$.
    First, there exists a neighbourhood~$\Xcal^* \subseteq \Xbb$ of~$x^*$ such that~$\inf \ACC(\Phi;x^*) = \inf \Phi(\Xcal^*)$, and~$\Ybb(\Xcal^*)$ is a neighbourhood of~$y^*$ by Lemma~\ref{lemma:chi(neighbourhood)_neighbourhood}.
    Second,~$y^*$ satisfies~$\inf \ACC(\Phi;x^*) \geq \inf \ACC(\barrier{\varphi}{\Omega};y^*)$ by Lemma~\ref{lemma:inf_ACC(Phi,x)_geq_inf_ACC(phi,y)}.
    Third,~$\inf \barrier{\varphi}{\Omega}(\Ybb(\Xcal^*)) \geq \inf \Phi(\Xcal^*)$ holds since all~$y \in \Ybb(\Xcal^*)$ satisfy~$\barrier{\varphi}{\Omega}(y) \geq \Phi(\chi(y)) \geq \inf \Phi(\Xcal^*)$.
    We deduce that~$\inf \barrier{\varphi}{\Omega}(\Ybb(\Xcal^*)) \geq \inf \ACC(\barrier{\varphi}{\Omega};y^*)$, and then~$\inf \barrier{\varphi}{\Omega}(\Ybb(\Xcal^*)) = \inf \ACC(\barrier{\varphi}{\Omega};y^*)$ via Lemma~\ref{lemma:inf_neighbourhood_geq_inf_ACC}.
    Thus,~$y^*$ is a generalized local solution to Problem~\eqref{problem:P_initial}, which proves Theorem~\refbis{theorem:connections_solutions}{generalized}.
\end{proof}

\subsubsection{Proof of validity of Proposition~\ref{proposition:cDSM_applied_to_Phi} under Assumption~\ref{assumption:pof_additional}}
\label{section:results/proof/application_theorem}

This section shows that the existing convergence analysis of Algorithm~\ref{algo:covering_framework} (from~\cite[Theorem~2]{AuBoBo24Covering}, and recalled in Proposition~\ref{proposition:cDSM_applied_to_Phi} above) is applicable in the context of Reformulation~\eqref{problem:P_reformulated}.
First, Proposition~\ref{proposition:requirements_OK} shows that all assumptions involved in Proposition~\ref{proposition:cDSM_applied_to_Phi} are met in our context.

\begin{proposition}
    \label{proposition:requirements_OK}
    If Assumptions~\ref{assumption:pof_minimal} and~\ref{assumption:pof_additional} hold, then~$\Phi$ is bounded below and has its sublevel sets bounded, and~$X_i$ is an ample continuity set of~$\Phi$ for each~$i \in \llb1,N\rrb$.
\end{proposition}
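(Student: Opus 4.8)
The plan is to verify the three claimed properties of $\Phi$ in turn, each by unpacking the relevant clause of Assumptions~\ref{assumption:pof_minimal} and~\ref{assumption:pof_additional} and combining it with the bijection structure recorded in Lemma~\ref{lemma:chi_gamma_reciprocal}. For boundedness from below, I would first observe that $\Phi(x) = \varphi(\gamma(x))$ on $X$ and $\Phi(x) = +\infty$ elsewhere, and that $\gamma$ is a bijection from $X$ onto $\gamma(X)$ by Lemma~\ref{lemma:chi_gamma_reciprocal}, so the image $\Phi(X)$ coincides with $\varphi(\gamma(X))$. Assumption~\ref{assumption:pof_additional} provides a lower bound $m \in \Rbb$ for $\varphi_{|\gamma(X)}$; then $\Phi(x) \geq m$ for all $x \in X$ and trivially $+\infty \geq m$ off $X$, so $\Phi$ is bounded below by $m$.

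Next, for the bounded sublevel sets, I would fix $c \in \Rbb$ and set $L_c \defequal \{x \in \Xbb : \Phi(x) \leq c\}$. Since points outside $X$ take the value $+\infty$, one has $L_c \subseteq X$, and using the fact that $\chi_{|\gamma(X)}$ inverts $\gamma$ (Lemma~\ref{lemma:chi_gamma_reciprocal}) I would rewrite $L_c = \chi(S_c)$ where $S_c \defequal \{y \in \gamma(X) : \varphi(y) \leq c\}$; the inclusion in both directions follows from $\gamma(\chi(y)) = y$ and $\chi(\gamma(x)) = x$. The set $S_c$ is a sublevel set of $\varphi_{|\gamma(X)}$, hence bounded by Assumption~\ref{assumption:pof_additional}, and the hypothesis that $\chi$ maps bounded subsets of $\Ybb$ to bounded subsets of $\Xbb$ then forces $L_c = \chi(S_c)$ to be bounded. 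This chaining of the two distinct boundedness hypotheses through the image of $\chi$ is the only mildly delicate step; the rest of the proof is essentially bookkeeping with the definitions.

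Finally, for each $i \in \llb1,N\rrb$ I would establish that $X_i$ is an ample continuity set of $\Phi$. Ampleness is granted directly by Assumption~\ref{assumption:pof_additional}, since it is an intrinsic property of $X_i$ as a subset of $\Xbb$, independent of $\Phi$. For the continuity-set property, because $X_i \subseteq X$ the restriction $\Phi_{|X_i}$ equals $\varphi \circ \gamma_{|X_i}$, and as $\gamma$ maps into $Y$ the value $\varphi(\gamma(x))$ is finite for every $x \in X_i$. Continuity then follows by factoring $\Phi_{|X_i}$ as the composition of $\gamma_{|X_i}$, which is uniformly continuous by Assumption~\ref{assumption:pof_minimal} (hence continuous as a map from $X_i$ onto $\gamma(X_i)$), followed by $\varphi_{|\gamma(X_i)}$, which is continuous by Assumption~\ref{assumption:pof_additional}; a composition of continuous maps is continuous, which completes the verification of all three properties.
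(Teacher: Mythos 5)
Your proposal is correct and follows essentially the same route as the paper: the lower bound from $\varphi_{|\gamma(X)}$, the identification of each sublevel set of $\Phi$ as the $\chi$-image of the corresponding sublevel set of $\varphi_{|\gamma(X)}$ via Lemma~\ref{lemma:chi_gamma_reciprocal}, and continuity of $\Phi_{|X_i}$ as the composition $(\varphi_{|\gamma(X_i)}) \circ (\gamma_{|X_i})$. Your explicit remark that $\gamma$ maps into $Y$, so $\Phi$ is finite on each $X_i$, is a detail the paper leaves implicit in verifying the continuity-set definition.
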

\begin{proof}
    Consider that Assumptions~\ref{assumption:pof_minimal} and~\ref{assumption:pof_additional} hold.
    First,~$\inf \Phi(\Xbb) = \inf \Phi(X) = \inf \varphi(\gamma(X)) > \minusinfty$ holds by assumption, so~$\Phi$ is bounded below.
    Second, let~$\alpha \in \Rbb$.
    For all~$x \in \Xbb$, the inclusion that~$x$ lies in the sublevel set of~$\Phi$ of level~$\alpha$ reads as~$x \in \Phi^{-1}({]}\minusinfty,\alpha])$, which is equivalent to~$\Phi(x) \leq \alpha$, and then to~$\varphi(\gamma(x)) \leq \alpha$, and then to~$\gamma(x) \in \varphi_{|\gamma(X)}^{-1}(]\minusinfty,\alpha])$, and then to~$x \in \chi(\varphi_{|\gamma(X)}^{-1}(]\minusinfty,\alpha]))$ by Lemma~\ref{lemma:chi_gamma_reciprocal}.
    Thus,~$\Phi^{-1}({]}\minusinfty,\alpha])$ is bounded, since by assumption~$\varphi_{|\gamma(X)}^{-1}(]\minusinfty,\alpha])$ is bounded and~$\chi(\Ycal)$ is bounded for all~$\Ycal \subseteq \Ybb$ bounded.
    Last, Assumption~\ref{assumption:pof_additional} yields~$X = \partitioncup_{i=1}^{N} X_i$ such that, for all~$i \in \llb1,N\rrb$,~$X_i$ is ample and~$\Phi_{|X_i} = (\varphi_{|\gamma(X_i)}) \circ (\gamma_{|X_i})$ is continuous as a composition of continuous functions.
\end{proof}

Then, Proposition~\ref{proposition:semicontinuity_Phi} highlights some conditions to ensure the requirement of lower semicontinuity of~$\Phi$ (involved in the last part of Proposition~\ref{proposition:cDSM_applied_to_Phi}).

\begin{proposition}
    \label{proposition:semicontinuity_Phi}
    Under Assumption~\ref{assumption:pof_minimal}, for all~$x \in X$, if~$\varphi$ is lower semicontinuous at all~$y \in \ACC(\gamma;x)$ and~$\ACC(\gamma;x) \subseteq \Omega$, then~$\Phi$ is lower semicontinuous at~$x$.
\end{proposition}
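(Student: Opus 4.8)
The plan is to verify the lower semicontinuity of~$\Phi$ at~$x$ sequentially. I fix an arbitrary sequence~$(x^k)_{k \in \Nbb} \in \Xbb^\Nbb$ converging to~$x$ and aim to prove that~$\liminf_{k} \Phi(x^k) \geq \Phi(x) = \varphi(\gamma(x))$, the latter value being finite since~$\gamma(x) \in Y$. Write~$L \defequal \liminf_{k} \Phi(x^k)$. If~$L = +\infty$ there is nothing to prove, so I may assume~$L < +\infty$ and extract a subsequence (not relabelled) along which~$\Phi(x^k) \to L$. Recalling that~$\Phi$ is finite exactly on~$X$ and equals~$+\infty$ elsewhere, the finiteness of~$L$ forces~$\Phi(x^k) < +\infty$ for~$k$ large enough, hence~$x^k \in X$ and~$\Phi(x^k) = \varphi(\gamma(x^k))$; discarding the finitely many remaining indices yields a sequence in~$X$ converging to~$x$ with~$\varphi(\gamma(x^k)) \to L$.

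Next I would locate the limit of the image points. By Lemma~\ref{lemma:ACC(gamma;x)_nonempty_finite}, the set~$\acc((\gamma(x^k))_{k \in \Nbb})$ is a nonempty subset of the finite set~$\ACC(\gamma;x)$, so I can extract a further subsequence along which~$\gamma(x^k) \to y^*$ for some~$y^* \in \ACC(\gamma;x)$. The same lemma also grants the inclusion~$y^* \in \Ybb(x)$, which is the geometric fact linking the limit of the images back to the partition set indexed by~$x$.

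Finally I would combine the two standing hypotheses. On the one hand,~$y^* \in \ACC(\gamma;x) \subseteq \Omega$ by assumption and~$y^* \in \Ybb(x)$, so~$y^*$ is feasible for the subproblem~\eqref{problem:P_subproblem} at~$x$; since~$\gamma(x) \in \Gamma(x)$ is a global minimizer of~$\varphi$ over~$\Ybb(x) \cap \Omega$, this yields~$\varphi(y^*) \geq \varphi(\gamma(x))$ (an inequality in which~$\varphi(y^*)$ cannot be~$\minusinfty$, precisely by optimality of~$\gamma(x)$). On the other hand,~$\varphi$ is lower semicontinuous at~$y^*$ by assumption, so from~$\gamma(x^k) \to y^*$ I obtain~$L = \lim_{k} \varphi(\gamma(x^k)) \geq \varphi(y^*)$. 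Chaining the two inequalities gives~$L \geq \varphi(\gamma(x)) = \Phi(x)$, which is the desired conclusion.

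The routine parts are the sequential reformulation of lower semicontinuity and the repeated subsequence extractions; the only point requiring care is the extended-real bookkeeping around the barrier value~$+\infty$, namely arguing that a finite~$L$ is necessarily realized along indices in~$X$ (so that~$\Phi$ may be replaced by~$\varphi \circ \gamma$) and then transporting the finite limit~$L$ through the lower semicontinuity of~$\varphi$ without losing the optimality inequality~$\varphi(y^*) \geq \varphi(\gamma(x))$. The structural input that makes the argument close is the finiteness of~$\ACC(\gamma;x)$ from Lemma~\ref{lemma:ACC(gamma;x)_nonempty_finite}, which guarantees that some accumulation point~$y^*$ of the images both lies in~$\Ybb(x)$ and inherits the two hypotheses of feasibility in~$\Omega$ and lower semicontinuity of~$\varphi$.
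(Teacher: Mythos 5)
Your proof is correct. It rests on the same two pillars as the paper's own argument: Lemma~\ref{lemma:ACC(gamma;x)_nonempty_finite} (nonempty, finite accumulation set of~$(\gamma(x^k))_{k}$ contained in~$\Ybb(x)$), followed by the chaining of the lower-semicontinuity hypothesis with the global optimality of~$\gamma(x)$ over~$\Ybb(x) \cap \Omega$. The difference is the packaging. The paper routes the argument through Lemma~\ref{lemma:inf_ACC(Phi,x)_geq_inf_ACC(phi,y)}, i.e.\ the inequality~$\inf \ACC(\Phi;x) \geq \min_{y \in \ACC(\gamma;x)} \inf \ACC(\barrier{\varphi}{\Omega};y)$, and then bounds~$\inf \ACC(\barrier{\varphi}{\Omega};y) \geq \varphi(y) \geq \Phi(x)$ at every~$y \in \ACC(\gamma;x)$; you instead inline that content as a direct sequential argument: extract a subsequence realizing the liminf, observe that a finite liminf forces the indices into~$X$, extract again so that~$\gamma(x^k)$ converges to a single~$y^* \in \ACC(\gamma;x)$, and conclude at that one point. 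What your route buys is self-containment and economy of machinery: you never need the extreme-barrier function~$\barrier{\varphi}{\Omega}$ or the~$\ACC$-set calculus, and you only invoke the hypotheses at the one accumulation point selected by your subsequence (the statement's hypothesis at all of~$\ACC(\gamma;x)$ trivially covers this). What the paper's route buys is reuse: Lemma~\ref{lemma:inf_ACC(Phi,x)_geq_inf_ACC(phi,y)} is also the engine behind Theorem~\refbis{theorem:connections_solutions}{generalized}, so the same inequality is proved once and exploited twice. Your extended-real bookkeeping is also sound, and it touches exactly the delicate points: $\Phi$ is finite precisely on~$X$ (since~$\gamma(x) \in Y$), and a feasible~$y^*$ cannot have value~$\minusinfty$ because~$\gamma(x)$ is a global minimizer of the subproblem with finite value.
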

\begin{proof}
    Consider that Assumption~\ref{assumption:pof_minimal} holds, let~$x \in X$ and assume that~$\varphi$ is lower semicontinuous at all~$y \in \ACC(\gamma;x) \subseteq \Omega$.
    Lemma~\ref{lemma:inf_ACC(Phi,x)_geq_inf_ACC(phi,y)} claims that~$\inf \ACC(\Phi;x) \geq \min_{y \in \ACC(\gamma;x)} \inf \ACC(\barrier{\varphi}{\Omega};y)$.
    Moreover, for all~$y \in \ACC(\gamma;x)$, it holds that~$\inf \ACC(\barrier{\varphi}{\Omega};y) \geq \inf \ACC(\varphi;y) \geq \varphi(y)$ by lower semicontinuity of~$\varphi$ at~$y$, and that~$\varphi(y) \geq \varphi(\gamma(\chi(y))) = \Phi(x)$ since~$y \in \Omega$ and~$\chi(y) = x \in X$ so~$\gamma(\chi(y))$ exists.
    Then, we deduce that~$\inf \ACC(\Phi;x) \geq \Phi(x)$, which proves the claim.
\end{proof}

\subsubsection{Proof of Theorem~\ref{theorem:solving_P}}
\label{section:results/proof/solving_P}

Finally, we prove Theorem~\ref{theorem:solving_P}.
We apply Proposition~\ref{proposition:cDSM_applied_to_Phi} to gather a solution~$x^*$ to Reformulation~\eqref{problem:P_reformulated}, and then we derive a solution to Problem~\eqref{problem:P_initial} via Theorem~\ref{theorem:connections_solutions}.

\begin{proof}
    Consider Framework~\ref{framework:PO} under Assumptions~\ref{assumption:pof_minimal} and~\ref{assumption:pof_additional}.
    Let~$(x^k)_{k \in \Nbb}$ be generated by Algorithm~\ref{algo:covering_framework} solving Reformulation~\eqref{problem:P_reformulated}.
    Proposition~\ref{proposition:cDSM_applied_to_Phi} is applicable by Proposition~\ref{proposition:requirements_OK}, so all~$x^* \in \acc((x^k)_{k \in \Nbb}) \neq \emptyset$ are generalized local solutions to Reformulation~\eqref{problem:P_reformulated}, so~$\ACC(\gamma;x^*)$ contains a generalized local solution to Problem~\eqref{problem:P_initial} by Theorem~\refbis{theorem:connections_solutions}{generalized}.
    This proves the first part of Theorem~\ref{theorem:solving_P}.
    If moreover~$x^* \in X$ and~$\varphi$ is lower semicontinuous at all~$y \in \ACC(\gamma;x^*)$ and~$\ACC(\gamma;x^*) \subseteq \Omega$, then~$\Phi$ is lower semicontinuous at~$x^*$ by Proposition~\ref{proposition:semicontinuity_Phi}, so~$x^*$ is a local solution to Reformulation~\eqref{problem:P_reformulated} by Proposition~\ref{proposition:cDSM_applied_to_Phi}, so~$\gamma(x^*)$ is a local solution to Problem~\eqref{problem:P_initial} by Theorem~\refbis{theorem:connections_solutions}{local}.
    This completes the proof of Theorem~\ref{theorem:solving_P}.
\end{proof}

\clearpage\newpage
\section{Application of the \pof to an infinite-dimensional problem}
\label{section:example_optimal_control}

Section~\ref{section:intro/motivation} mentions that the design of the \pof allows addressing infinite-dimensional problems.
This section proposes a simple infinite-dimensional example for illustration.
Precisely, we investigate a discontinuous version of a classical optimal control problem that challenges standard theoretical and numerical techniques found in the literature.
In contrast, the \pof allows reformulating this problem so that it may be solved analytically.
This section aims for a concise analytical illustration only, using an example from the PhD thesis~\cite[Chapter~7.3.1]{BouchetPhD}.
We refer to~\cite[Chapter~7]{BouchetPhD} and our technical report~\cite{BoAuBo21PWMayerCost} for numerical experiments applying the \pof to optimal control problems and using the \pom to solve them.

We study a discontinuous version of the textbook \textit{double integrator} problem~\cite{SethiTompson00Controle,Vinter10Controle}.
This problem considers a point moving along a line, with position~$p(t)$, velocity~$v(t)$, and (controlled) acceleration~$u(t)$, for all time~$t$ in the interval~$[0,5]$.
The problem seeks to minimize the \textit{ceiling}~$\ceil{\cdot}$ of~$p(5)$ plus the integral of~$\frac{1}{2}u^2$.
The initial state~$(p,v)(0) \defequal (0,0)$ and the final velocity~$v(5) \defequal 0 $ are fixed.
Let us denote also by~$\Acal\Ccal$ the set of absolutely continuous functions from~$[0,5]$ to~$\Rbb$, endowed with the standard norm~$\textnorm{\cdot}_{\Acal\Ccal}$ of the Sobolev space~$W^{1,1}([0,5]) \cong \Acal\Ccal$, and by~$\Lcal^\infty$ the set of essentially bounded functions from~$[0,5]$ to~$\Rbb$, endowed with the standard norm~$\textnorm{\cdot}_{\Lcal^\infty}$.
The problem is

\begin{equation}
    \label{problem:integrator}
    \tag{$\Pbf^{\texttt{DI}}_{\texttt{ini}}$}
    \problemoptim
        {\minimize}
        {(p,v,u) \in \mathcal{AC} \times \mathcal{AC} \times \Lcal^{\infty}}
        {\ceil{p(5)} + \dinteg{0}{5} \dfrac{u(t)^2}{2}~ dt}
        {
            \dot{p}(t) = v(t), & \mbox{a.e.}~ t \in [0,5], \\
            \dot{v}(t) = u(t), & \mbox{a.e.}~ t \in [0,5], \\
            (p,v)(0) = (0,0), \\
            v(5) = 0.
            }
\end{equation}

If Problem~\eqref{problem:integrator} was written without the ceiling~$\textceil{p(5)}$, then it would fit a usual linear-quadratic structure that could be solved using the acknowledged \textit{Pontryagin's maximum principle} (\pmp)~\cite{Pontryagin62PMP}.
Yet, due to the discontinuities introduced by~$\textceil{p(5)}$, neither the \pmp nor its adaptations~\cite{BaPf19TimeCrisis,Clarke13Book} are applicable.
Other usual techniques such as \textit{direct and indirect numerical methods}~\cite{Rao10SurveyNumControl} also fail on this problem.
We refer to our technical report~\cite{BoAuBo21PWMayerCost} for a more detailed discussion on this fact.

Nevertheless, as we now show, the \pof allows solving Problem~\eqref{problem:integrator} analytically.
To fit the \pof Framework~\ref{framework:PO}, we define~$\Ybb \defequal \mathcal{AC} \times \mathcal{AC} \times \Lcal^{\infty}$, and the norm~$\textnorm{(p,v,u)}_{\Ybb} \defequal \textnorm{p}_{\Acal\Ccal} + \textnorm{v}_{\Acal\Ccal} + \textnorm{u}_{\Lcal^\infty}$ for all~$(p,v,u) \in \Ybb$.
Then we define the cost functional~$\varphi : y \defequal (p,v,u) \in \Ybb \mapsto \ceil{p(5)} + \frac{1}{2}\integ_{0}^{5} u(t)^2~ dt$.
To partition~$\Ybb$, we define~$\Xbb \defequal \Rbb$ and for all~$x \in \Xbb$, we set~$\Ybb(x) \defequal \{(p,v,u) \in \Ybb : p(5) = x\}$.
It follows that~$\chi$ is defined as~$\chi(p,v,u) \defequal p(5)$ for all~$(p,v,u) \in \Ybb$.
Then we apply the \pof as described in Section~\ref{section:results/framework}.
For all~$x \in \Xbb$, the subproblem of minimizing~$\varphi$ over~$\Ybb(x)$ reads as

\begin{equation}
    \label{problem:integrator_subproblem}
    \tag{$\Pbf^{\texttt{DI}}_{\mathrm{sub}}(x)$}
    \problemoptim
        {\minimize}
        {(p,v,u) \in \mathcal{AC} \times \mathcal{AC} \times \Lcal^{\infty}}
        {\ceil{p(5)} + \dinteg{0}{5} \dfrac{u(t)^2}{2}~ dt}
        {  \dot{p}(t) = v(t), & \mbox{a.e.}~ t \in [0,5], \\
           \dot{v}(t) = u(t), & \mbox{a.e.}~ t \in [0,5], \\
           (p,v)(0) = (0,0), \\
           (p,v)(5) = (x,0).
        }
\end{equation}

For all~$x \in \Xbb$, Subproblem~\eqref{problem:integrator_subproblem} has a classical linear-quadratic structure, since~$\textceil{p(5)} = \textceil{x}$ is constant.
Thus, Subproblem~\eqref{problem:integrator_subproblem} admits a solution and the \pmp is applicable.
This latter provides the necessary optimality condition that, if~$(p^*,v^*,u^*) \in \mathcal{AC} \times \mathcal{AC} \times \Lcal^{\infty}$ is a solution to Subproblem~\eqref{problem:integrator_subproblem}, then there exists a nontrivial triplet~$(q_0^*,q_p^*,q_v^*) \in \{ -1, 0 \} \times \mathcal{AC} \times \mathcal{AC}$ such that
\begin{equation*}
    \label{system:PMP}
    \tag{\pmp}
    \left\{\begin{array}{llll}
        p^*(0) = 0, & p^*(5) = x, & \dot{p}^*(t) = v^*(t),     & \mbox{ a.e.}~ t \in [0,5], \\
        v^*(0) = 0, & v^*(5) = 0, & \dot{v}^*(t) = u^*(t),     & \mbox{ a.e.}~ t \in [0,5], \\
                    &             & \dot{q}_p^*(t) = 0,        & \mbox{ a.e.}~ t \in [0,5], \\
                    &             & \dot{q}_v^*(t) = q_p^*(t), & \mbox{ a.e.}~ t \in [0,5], \\
        \multicolumn{3}{l}{ u^*(t) \in \argmax_{\omega} q_p^*(t)v^*(t) + q_v^*(t)\omega + q_0^*\frac{\omega^2}{2}}, & \mbox{ a.e.}~ t \in [0,5].
    \end{array}\right.
\end{equation*}

Then, for all~$x \in \Xbb$, elementary calculations ensure that System~\eqref{system:PMP} has a unique solution given by

\begin{equation*}
    (p^*_x,v^*_x,u^*_x) (t) = \dfrac{x}{125} \left(t^2(15-2t) , t(30-6t) , 30-12t) \right) \quad \mbox{a.e.}~ t \in [0,5],
\end{equation*}
and thus, Subproblem~\eqref{problem:integrator_subproblem} has a unique global solution given by
\begin{equation*}
    \gamma(x) \defequal (p^*_x,v^*_x,u^*_x),
    \quad \text{with} \quad
    \varphi\left(\gamma(x)\right) = \ceil{x} + \dfrac{12}{125}x^2.
\end{equation*}
Hence, in this context, Reformulation~\eqref{problem:P_reformulated} reads as
\begin{equation}
    \label{problem:integrator_reformulated}
    \tag{$\Pbf^{\texttt{DI}}_\texttt{ref}$}
    \problemoptimfree{\minimize}{x \in \Xbb}{\Phi(x)},
    \quad \text{where} \quad
    \Phi \defequal \varphi \circ \gamma.
\end{equation}

Now that Problem~\eqref{problem:integrator} fits into the \pof Framework~\ref{framework:PO}, observe that Assumption~\ref{assumption:pof_minimal} holds.
Indeed,~$\chi$ is continuous, and~$\gamma$ is uniformly continuous so Assumption~\ref{assumption:pof_minimal} is nonstringent with respect to the partition of~$\Xbb$.
A natural admissible partition of~$\Xbb$ is given by~$\Xbb = \partitioncup_{n \in \Zbb} ]n,n+1]$, since the ceiling~$\textceil{x}$ involved in~$\varphi(\gamma(x))$ is constant over each of these intervals.
Consequently, Theorem~\ref{theorem:connections_solutions} is applicable.
By Theorem~\refbis{theorem:connections_solutions}{global}, any global solution~$x^*$ to~Reformulation~\eqref{problem:integrator_reformulated} leads to~$\gamma(x^*)$ as a global solution to Problem~\eqref{problem:integrator}.
Moreover, the minimizer of~$\Phi$ is tractable as~$x^* \defequal -5$.
Thus, by Theorem~\refbis{theorem:connections_solutions}{global},

\begin{equation*}
    \gamma(x^*),
    \quad \text{with}~ x^* \defequal -5, \quad
    \text{is a global solution to Problem~\eqref{problem:integrator}}.
\end{equation*}

This proves that the discontinuous Problem~\eqref{problem:integrator}, which challenges standard techniques from the literature on optimal control theory, is solved analytically using the \pof.
Remark that solving a naive alteration of Problem~\eqref{problem:integrator} that consists in replacing~$\textceil{p(5)}$ by~$p(5)$ in the objective function fails to recover a global solution to the true problem.
Indeed, this altered problem is tractable using the \pmp, but its solution equals~$\gamma(x^\sharp)$ with~$x^\sharp \defequal \frac{-125}{24} \approx -5.20 \neq x^*$.
This example illustrates how the \pof may be used to tackle some infinite-dimensional problems that admit difficulties depending on a few variables.
Moreover, we stress that we need neither a discretization scheme to make the dimension finite nor a \dfo algorithm to solve Reformulation~\eqref{problem:integrator_reformulated}, since we obtain its solution analytically.
Thus, this examples relies only on the \pof and not on the \pom, so we do not need to check Assumption~\ref{assumption:pof_additional}.

\section{Application of the \pof to composite problems}
\label{section:applications}

In this section, we focus on a class of \textit{composite optimization problems} (see~\cite{LaMe24structureAware,LaMeZh21ManifoldComposite} and many references therein, or~\cite[Section~5]{LaMeWi2019}) where~$\Omega$ is described by smooth constraints and~$\varphi$ is defined by
\begin{equation*}
    \fct{\varphi}{y}{\Ybb}{\widetilde{\varphi}(y,\sigma(f(y)))+\varepsilon(f(y)),}{\Rbbinf}
\end{equation*}
where~$\widetilde{\varphi}: \Ybb \times \Rbb \to \Rbb$ is smooth and easy to minimize and has an explicit definition, and~$f: \Ybb \to \Xbb$ is smooth with explicit definition (and~$\dim(\Xbb) \in \Nbb^*$ small), and~$\sigma: \Xbb \to \Rbb$ and~$\varepsilon: \Xbb \to \Rbbinf$ are \textit{blackboxes} (that is, their expressions are both unknown).
We say that~$\varphi$ is a \textit{greybox}, since its explicit expression depends on blackbox components.
Such \textit{composite greybox problems} are compatible with the \pof.

In this context, adding a constraint~$f(y) = x$, with any~$x \in \Xbb$ fixed, simplifies Problem~\eqref{problem:P_initial}.
Indeed,~$f(y) = x$ is a smooth equality constraint and~$\sigma(f(y)) = \sigma(x)$ and~$\varepsilon(f(y)) = \varepsilon(x)$ are constant.
Accordingly, the \pof partitions~$\Ybb = \partitioncup_{x \in \Xbb} \Ybb(x)$ with~$\Ybb(x) \defequal \{y \in \Ybb: f(y) = x\}$ for all~$x \in \Xbb$ (note that this partition yields~$\chi \defequal f$ so the continuity of~$\chi$ naturally holds).
Then, either Subproblem~\eqref{problem:P_subproblem} is infeasible or, by assumption, a global solution~$\gamma(x)$ may be computed.
Thus, Reformulation~\eqref{problem:P_reformulated} is a low-dimensional blackbox problem where, given any~$x \in \Xbb$, we try to compute~$\gamma(x)$ and~$\sigma(x)$ and~$\varepsilon(x)$ on-the-fly, and we return either~$\Phi(x) \defequal \widetilde{\varphi}(\gamma(x),\sigma(x))+\varepsilon(x)$ if~$\gamma(x)$ exists or~$\Phi(x) \defequal +\infty$ otherwise.
We then use the \pom to solve Reformulation~\eqref{problem:P_reformulated} and deduce a (possibly, generalized) local solution to Problem~\eqref{problem:P_initial}.

This section solves four examples using the \pom.
In Section~\ref{section:applications/monovariable}, we solve the example from Section~\ref{section:intro/illustrative_example}.
In Section~\ref{section:applications/radial},~$f$ returns the radius of the polar coordinates and~$\gamma$ is easily tractable.
In Section~\ref{section:applications/nonlinear},~$f$ returns a nonlinear combination of all variables and the expression of~$\gamma$ may be calculated.
In Section~\ref{section:applications/dim2},~$f$ returns two nonlinear combinations of all variables, and~$\gamma$ cannot be expressed in closed form but Subproblem~\eqref{problem:P_subproblem} is solvable with a dedicated algorithmic method.
These examples show the variety of what Theorem~\ref{theorem:solving_P} may guarantee.
In Sections~\ref{section:applications/monovariable} and~\ref{section:applications/nonlinear}, Theorem~\ref{theorem:solving_P} ensures the optimality of~$\gamma(x^*)$ only in the generalized local sense, although~$\gamma(x^*)$ is a usual local solution in Section~\ref{section:applications/monovariable}.
In Sections~\ref{section:applications/radial} and~\ref{section:applications/dim2}, Theorem~\ref{theorem:solving_P} successfully claims the usual local optimality of~$\gamma(x^*)$.
In all cases, we implement the \pom with the \cdsm as described in Remark~\ref{remark:algo_parameters}, so we solve Reformulation~\eqref{problem:P_reformulated} numerically up to some stopping criterion and the \pom outputs an approximation of a solution to Problem~\eqref{problem:P_initial}.

\subsection{Mono-variable noise}
\label{section:applications/monovariable}

Consider again the example from Section~\ref{section:intro/illustrative_example}, defined via~$\Ybb = Y \defequal \Rbb^2$, and~$\Omega \defequal \Ybb$, and
\begin{equation*}
    \fct{\varphi}{(y_1,y_2)}{\Ybb}
    {\left(y_2-\sigma(y_1)\right)^2 + \varepsilon(y_1),}{\Rbb}
\end{equation*}
where we define~$\sigma$ and~$\varepsilon$ as follows, but the \pof may be applied without this information.
For all~$x \in \Rbb$, define~$\sigma(x) \defequal 2\floorceil{x}$, and~$\varepsilon(x) \defequal \abs{x}(1+\sin(\frac{2\pi}{x})^2)^{\frac{1}{2}}+\abs{\floorceil{x}}$ if~$x \in \Rbb^*$ and~$\varepsilon(0) \defequal 0$, where~$\floorceil{x} \defequal \floor{x}$ if~$x \in \Rbb_-$ and~$\floorceil{x} \defequal \ceil{x}-1$ if~$x \in \Rbb_+^*$.
Then, the unique minimizer of~$\varphi$ is~$y^* \defequal (0,0)$, with~$\varphi(y^*) = 0$.
In this context, we define~$\Ybb(x) \defequal \{x\}\times\Rbb$ for all~$x \in \Xbb = X \defequal \Rbb$, so the unique minimizer of~$\varphi_{|\Ybb(x)}$ is
\begin{equation*}
    \gamma(x) \defequal (x,\sigma(x)),
    \quad \mbox{with} \quad
    \varphi(\gamma(x)) = \varepsilon(x),
\end{equation*}
pictured in Figure~\ref{figure:application_monovariable_phi} (right) and Figure~\ref{figure:application_monovariable_Phi} (left).
It follows that Reformulation~\eqref{problem:P_reformulated} minimizes
\begin{equation*}
    \fct{\Phi}{x}{\Xbb}{\varepsilon(x),}{\Rbb}
\end{equation*}
shown on Figure~\ref{figure:application_monovariable_Phi} (right).
Its global minimizer is~$x^* \defequal 0$, with~$\Phi(x^*) = 0$, and~$\Phi$ is left-discontinuous at~$x^*$.
Assumptions~\ref{assumption:pof_minimal} and~\ref{assumption:pof_additional} hold.
Table~\ref{table:application_monovariable_results} shows that the \cdsm approaches~$x^*$ closely and from the side avoiding the discontinuity.
All instances return~$\widetilde{x}^* \in [0,2E^{-10}]$.
Hence, they all provide~$\widetilde{y}^* \defequal \gamma(\widetilde{x}^*)$ such that~$\norm{\widetilde{y}^* - y^*} \leq 2E^{-10}$ and~$\abs{\varphi(\widetilde{y}^*) - \varphi(y^*)} \leq 2E^{-10}$.
This empirically validates Theorem~\ref{theorem:solving_P}.
We also remark that~$\gamma(x^*)$ is a local solution to Problem~\eqref{problem:P_initial}, even though the requirement in Theorem~\ref{theorem:solving_P} that~$\varphi$ is lower semicontinuous at all points in~$\ACC(\gamma;x^*)$ does not hold (it fails at~$(0,-1) \in \ACC(\gamma;x^*)$).

\begin{figure}[!ht]
    \centering
    \includegraphics[width=0.43\linewidth, trim = 0 20 0 40,clip]{Figures/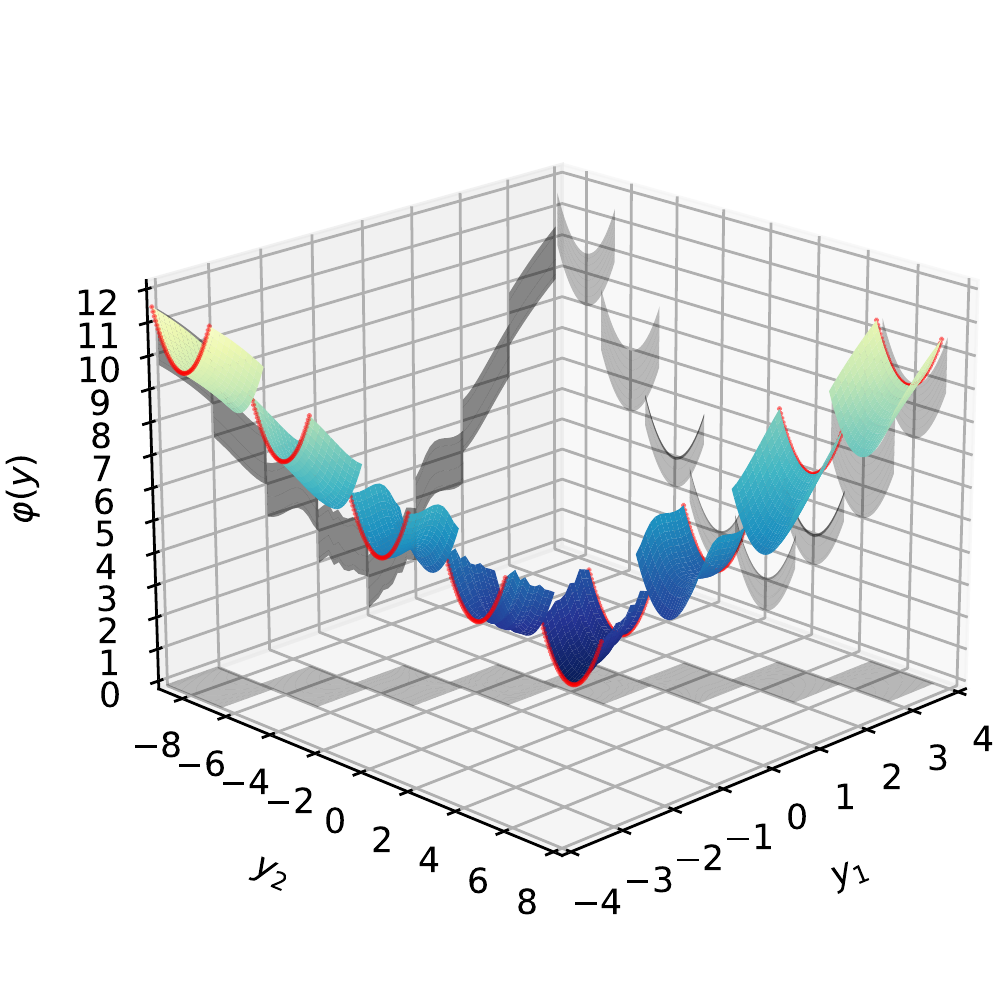}
    \hfill
    \includegraphics[width=0.41\linewidth, trim = 5 5 5 5,clip]{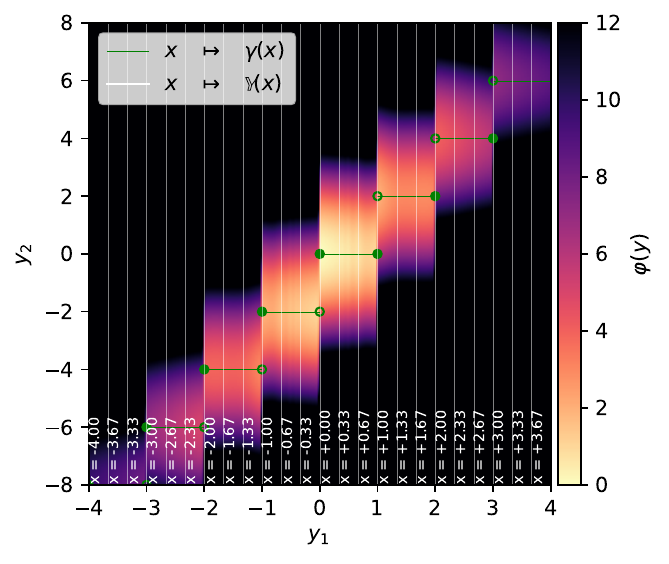}
    \caption{Function~$\varphi$ in Section~\ref{section:applications/monovariable}, (left) 3D view, (right) 2D view, partition of~$\Ybb$ and locus of~$\gamma$. For each~$y_1 \in \Rbb$, we plot~$\varphi_{|\{y_1\}\times\Rbb}$ for the values~$y_2 \in [\sigma(y_1)-1.3,\ \sigma(y_1)+1.3]$ only.}
    \label{figure:application_monovariable_phi}
\end{figure}

\begin{figure}[!ht]
    \centering
    \includegraphics[width=0.49\linewidth, trim = 8 10 7 7,clip]{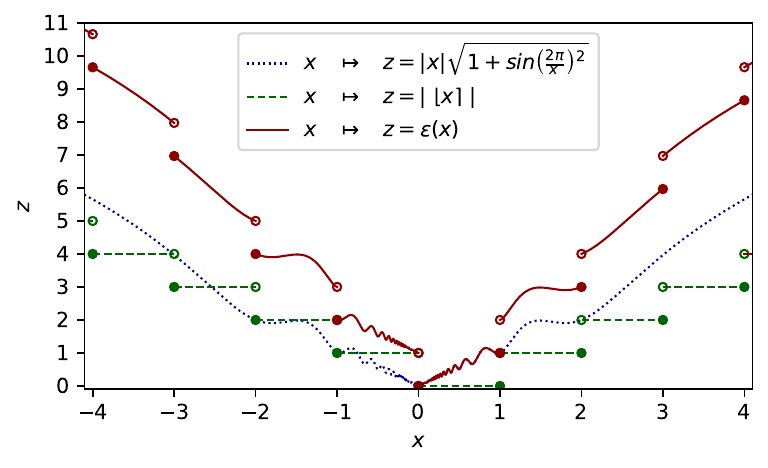}
    \hfill
    \includegraphics[width=0.49\linewidth, trim = 8 10 7 7,clip]{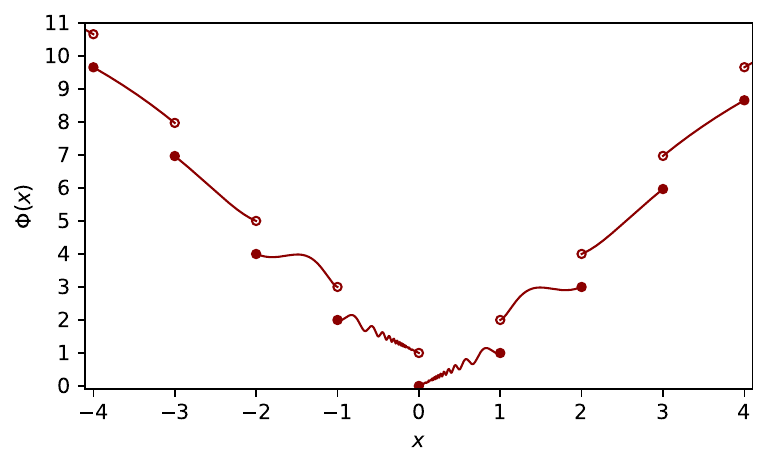}
    \caption{Functions~$\varepsilon$ and~$\Phi$ in Section~\ref{section:applications/monovariable}, (left) function~$\varepsilon$ and its components, (right) function~$\Phi$. In this section,~$\Phi = \varepsilon$.}
    \label{figure:application_monovariable_Phi}
\end{figure}

\begin{table}[!ht]
    \centering
    \begin{tabular}{||c||c|c|c|c||}\hline
\hline        $x^0$ & $1^{st}~ \hat{x}^k \in \left[\pm5E^{-03}\right]$ & $1^{st}~ \hat{x}^k \in \left[\pm5E^{-06}\right]$ & $1^{st}~ \hat{x}^k \in \left[\pm5E^{-09}\right]$ & returned~$\hat{x}^k$ \\ \hline
\hline    $+9.753$ & $\hat{x}^{23} = +3.37E^{-03}$ & $\hat{x}^{37} = +8.86E^{-07}$ & $\hat{x}^{55} = +3.15E^{-09}$ & $\hat{x}^{64} = +7.41E^{-12}$ \\
\hline       $+\pi$ & $\hat{x}^{18} = +3.12E^{-03}$ & $\hat{x}^{31} = +3.93E^{-06}$ & $\hat{x}^{49} = +1.00E^{-09}$ & $\hat{x}^{56} = +6.89E^{-11}$ \\
\hline  $+\sqrt{2}$ & $\hat{x}^{13} = +4.65E^{-03}$ & $\hat{x}^{29} = +2.79E^{-06}$ & $\hat{x}^{44} = +3.08E^{-09}$ & $\hat{x}^{54} = +5.12E^{-11}$ \\
\hline       $+e+1$ & $\hat{x}^{17} = +2.08E^{-03}$ & $\hat{x}^{24} = +3.01E^{-06}$ & $\hat{x}^{45} = +3.29E^{-09}$ & $\hat{x}^{53} = +3.45E^{-11}$ \\
\hline    $-9.753$ & $\hat{x}^{17} = +4.61E^{-04}$ & $\hat{x}^{33} = +3.33E^{-06}$ & $\hat{x}^{51} = +3.96E^{-09}$ & $\hat{x}^{60} = +4.05E^{-12}$ \\
\hline       $-\pi$ & $\hat{x}^{20} = +2.94E^{-03}$ & $\hat{x}^{33} = +3.46E^{-06}$ & $\hat{x}^{43} = +2.33E^{-10}$ & $\hat{x}^{54} = +1.16E^{-10}$ \\
\hline  $-\sqrt{2}$ & $\hat{x}^{06} = +3.45E^{-03}$ & $\hat{x}^{27} = +1.13E^{-06}$ & $\hat{x}^{43} = +1.51E^{-09}$ & $\hat{x}^{52} = +7.67E^{-13}$ \\
\hline       $-e-1$ & $\hat{x}^{21} = +2.74E^{-03}$ & $\hat{x}^{36} = +2.40E^{-06}$ & $\hat{x}^{49} = +4.02E^{-09}$ & $\hat{x}^{57} = +5.70E^{-11}$ \\
\hline\hline
\end{tabular}
    \caption{Results for eight instances of the \cdsm in the context of Section~\ref{section:applications/monovariable}. In this table, $\hat{x}^k \defequal x^k$.}
    \label{table:application_monovariable_results}
\end{table}

\subsection{Radial noise}
\label{section:applications/radial}

Consider an unconstrained ($\Omega \defequal \Ybb$) case in polar coordinates ($\Ybb = Y \defequal \Rbb_+\times[0,2\pi[$), involving
\begin{equation*}
    \fct{\varphi}{(r,\theta) }{\Ybb}
    {\sqrt{r}\sin\left(\dfrac{\theta-\sigma(r)}{2}\right)^2 + \varepsilon(r),}{\Rbb}
\end{equation*}
where the blackboxes~$\sigma$ and~$\varepsilon$ are defined as follows (but the \pof has no access to this information).
For all~$x \in \Rbb_+$, define~$\sigma(x) \defequal \pi-2\pi\log_2(x)$ if~$x > 0$ and~$\sigma(0) \defequal 0$, and~$\varepsilon(x) \defequal \sqrt{\textabs{x^2-2}} + \frac{\sin(10\pi(x-\sqrt{2}))^2}{10}$.
Then, the global minimizer of~$\varphi$ is~$y^* \defequal (\sqrt{2},0)$, with~$\varphi(y^*) = 0$.
We define~$\Ybb(x) \defequal \{x\}\times[0,2\pi[$ for all~$x \in \Xbb = X \defequal \Rbb_+$, so the unique global minimizer of~$\varphi_{|\Ybb(x)}$ is
\begin{equation*}
    \gamma(x) \defequal (x,\sigma(x)\sim2\pi),
    \quad \mbox{with} \quad
    \varphi(\gamma(x)) = \varepsilon(x),
\end{equation*}
where~$\sigma(x)\sim2\pi$ denotes the residual of~$\sigma(x)$ modulo~$2\pi$.
Figure~\ref{figure:application_radial_phi} (left) draws~$\varphi$, Figure~\ref{figure:application_radial_phi} (right) shows the locus of~$\gamma$, and Figure~\ref{figure:application_radial_Phi} (left) pictures~$\varepsilon$.
Then the function~$\Phi$ equals
\begin{equation*}
    \fct{\Phi}{x}{\Xbb}{\varepsilon(x),}{\Rbb}
\end{equation*}
shown on Figure~\ref{figure:application_radial_Phi} (right).
Its minimizer is~$x^* \defequal \sqrt{2}$, with~$\Phi(x^*) = 0$.
Assumptions~\ref{assumption:pof_minimal} and~\ref{assumption:pof_additional} hold.
Table~\ref{table:application_radial_results} shows the convergence of the \cdsm towards~$x^*$.
All instances return~$\widetilde{x}^* \in [x^* \pm 6E^{-11}]$, so~$\widetilde{y}^* \defequal \gamma(\widetilde{x}^*)$ satisfies~$\norm{\widetilde{y}^*-y^*} \leq 6E^{-11}$ and~$\abs{\varphi(\widetilde{y}^*)-\varphi(y^*)} \leq 3E^{-5}$.
This agrees with Theorem~\ref{theorem:solving_P}.

\begin{table}[!ht]
    \centering
    \begin{tabular}{||c||c|c|c|c||}\hline
\hline        $x^0$ & $1^{st}~ \hat{x}^k \in \left[\pm5E^{-03}\right]$ & $1^{st}~ \hat{x}^k \in \left[\pm5E^{-06}\right]$ & $1^{st}~ \hat{x}^k \in \left[\pm5E^{-09}\right]$ & returned~$\hat{x}^k$ \\ \hline
\hline          $0$ & $\hat{x}^{09} = +4.39E^{-03}$ & $\hat{x}^{29} = -8.47E^{-07}$ & $\hat{x}^{41} = +2.73E^{-09}$ & $\hat{x}^{52} = +5.39E^{-11}$ \\
\hline     $2^{-5}$ & $\hat{x}^{09} = +5.08E^{-04}$ & $\hat{x}^{27} = +4.42E^{-06}$ & $\hat{x}^{44} = -3.57E^{-09}$ & $\hat{x}^{54} = +3.83E^{-11}$ \\
\hline  $3\sqrt{2}$ & $\hat{x}^{17} = -1.14E^{-03}$ & $\hat{x}^{35} = -1.51E^{-06}$ & $\hat{x}^{51} = -1.95E^{-09}$ & $\hat{x}^{60} = +2.94E^{-11}$ \\
\hline       $4\pi$ & $\hat{x}^{25} = +1.37E^{-03}$ & $\hat{x}^{39} = +1.50E^{-06}$ & $\hat{x}^{57} = +2.05E^{-09}$ & $\hat{x}^{66} = -4.99E^{-11}$ \\
\hline          $5$ & $\hat{x}^{08} = +6.72E^{-05}$ & $\hat{x}^{27} = -1.45E^{-06}$ & $\hat{x}^{41} = -1.81E^{-09}$ & $\hat{x}^{48} = +5.71E^{-11}$ \\
\hline          $e$ & $\hat{x}^{08} = -1.75E^{-03}$ & $\hat{x}^{28} = +9.13E^{-07}$ & $\hat{x}^{41} = +4.04E^{-09}$ & $\hat{x}^{53} = -3.28E^{-11}$ \\
\hline        $e^2$ & $\hat{x}^{11} = -5.45E^{-04}$ & $\hat{x}^{25} = +4.01E^{-06}$ & $\hat{x}^{43} = +5.27E^{-10}$ & $\hat{x}^{53} = -5.47E^{-11}$ \\
\hline        $e^3$ & $\hat{x}^{30} = +3.83E^{-03}$ & $\hat{x}^{44} = +4.66E^{-06}$ & $\hat{x}^{59} = -4.19E^{-09}$ & $\hat{x}^{69} = +3.72E^{-12}$ \\
\hline\hline
\end{tabular}
    \caption{Results for eight instances of the \cdsm in the context of Section~\ref{section:applications/radial}. In this table,~$\hat{x}^k \defequal x^k-\sqrt{2}$.}
    \label{table:application_radial_results}
\end{table}

\begin{figure}[!ht]
    \centering
    \hfill
    \includegraphics[width=0.40\linewidth, trim=0 0 0 20, clip]{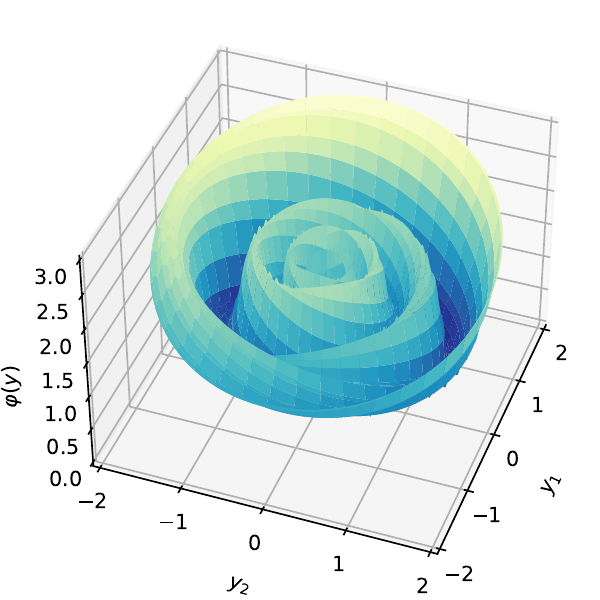}
    \hfill
    \includegraphics[width=0.45\linewidth, trim = 5 10 5 5,clip]{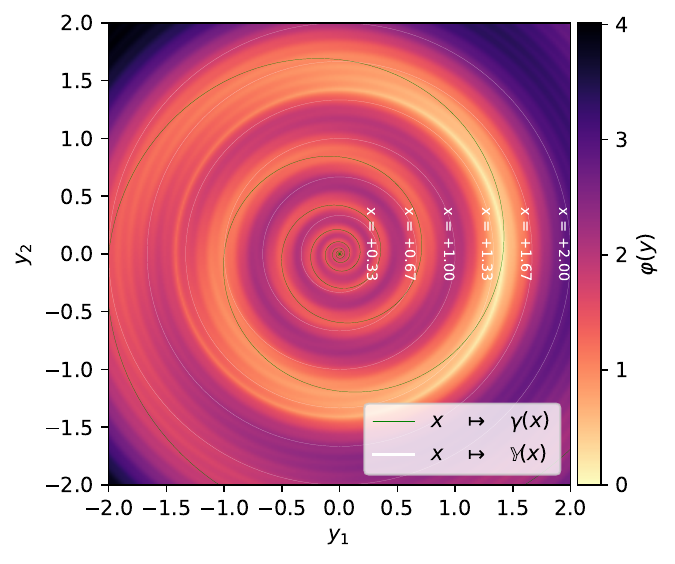}
    \caption{Function~$\varphi$ in Section~\ref{section:applications/radial}, (left) 3D view, (right) 2D view, partition of~$\Ybb$ and locus of~$\gamma$.}
    \label{figure:application_radial_phi}
\end{figure}

\begin{figure}[!ht]
    \centering
    \includegraphics[width=0.49\linewidth, trim = 8 10 7 7,clip]{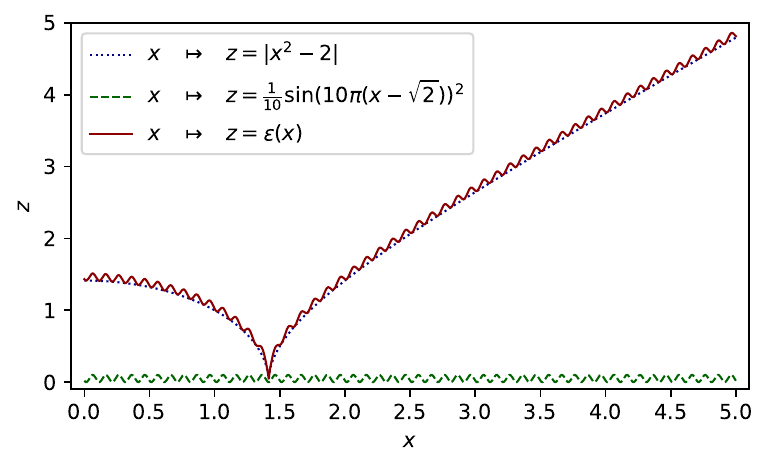}
    \hfill
    \includegraphics[width=0.49\linewidth, trim = 8 10 7 7,clip]{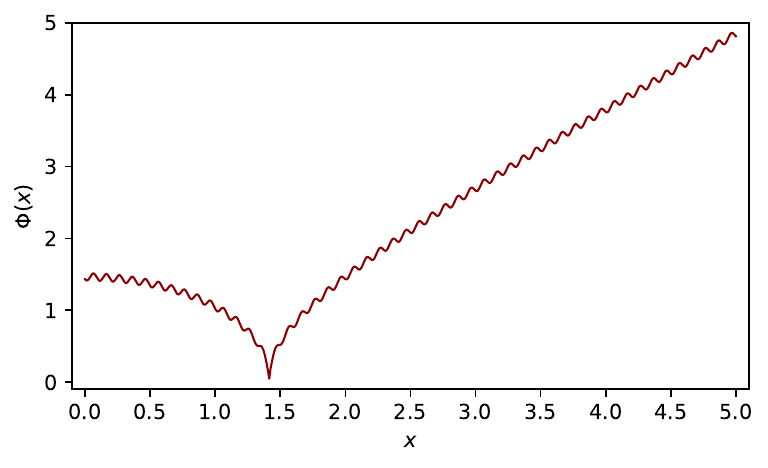}
    \caption{Functions~$\varepsilon$ and~$\Phi$ in Section~\ref{section:applications/radial}, (left) function~$\varepsilon$ and its components, (right) function~$\Phi$. In this section,~$\Phi = \varepsilon$.}
    \label{figure:application_radial_Phi}
\end{figure}

\subsection{Noise affected by a nonlinear combination of all variables}
\label{section:applications/nonlinear}

Consider the variables space~$\Ybb \defequal \Rbb^2$, the feasible set~$\Omega \defequal \Rbb_+\times\Rbb$ and the function
\begin{equation*}
    \fct{\varphi}{y \defequal (y_1,y_2)}{\Ybb}
    {\ln\left(1+\left(\dfrac{y_1^2}{y_2^2+1}-1\right)^2\right) + \varepsilon(f(y)),}{\Rbbinf}
\end{equation*}
where~$f: y \in \Ybb \mapsto y_1y_2 \in \Rbb$ is explicitly known, while the blackbox~$\varepsilon: \Rbb \to \Rbbinf$ is defined via~$\varepsilon(4) \defequal +\infty$ and~$\varepsilon(x) \defequal \exp((x-4)^{-1}) + \frac{1}{5}\sqrt{\textabs{x-4}}$ if~$x \neq 4$.
Then~$\varphi$ has no global minimizer, but it has a generalized global minimizer~$y^* \approx (2.13,1.88)$, since~$\inf \varphi(\Omega) = 0$ and~$\varphi(y) \to 0$ as~$y \to y^*$ from some direction, even though~$\varphi(y^*) = +\infty$.
The exact value of~$y^*$ and the direction are given below.
We define~$\Ybb(x) \defequal \{y \in \Ybb : f(y) = x\}$ for all~$x \in \Xbb = X \defequal \Rbb$, so~$\varphi_{|\Ybb(x)}$ is smooth (since~$f(y) = x$ so~$\varepsilon(f(y))$ is constant) and its minimizer is computable (by solving~$y_1^2/(y_2^2+1) = 1$ subject to~$y \in \Omega$ and~$f(y) = x$).
Then, for all~$x \in \Xbb$, the minimizer of~$\varphi_{|\Ybb(x)}$ is
\begin{equation*}
    \gamma(x) \defequal \left(
        \sqrt{\frac{1+\sqrt{1+4x^2}}{2}}
        ,
        x\sqrt{\frac{2}{1+\sqrt{1+4x^2}}}
    \right),
    \quad \mbox{with} \quad
    \varphi(\gamma(x)) = \varepsilon(x).
\end{equation*}
Figure~\ref{figure:application_nonlinear_phi} shows~$\varphi$, the partition of~$\Ybb$ and the locus of~$\gamma$, and Figure~\ref{figure:application_nonlinear_Phi} (left) shows~$\varepsilon$.
Remark that~$\gamma$ is continuous, even though~$\varphi$ is not.
In this context,~$\Phi$ equals
\begin{equation*}
    \fct{\Phi}{x}{\Xbb}{\varepsilon(x),}{\Rbb}
\end{equation*}
shown on Figure~\ref{figure:application_nonlinear_Phi} (right).
Then~$\Phi$ has no minimum but~$x^* \defequal 4$ is a generalized global minimizer, since~$\lim_{x \nearrow x^*} \Phi(x) = 0 = \inf \Phi(\Xbb)$ despite~$\lim_{x \searrow x^*} \Phi(x) = +\infty = \Phi(x^*)$.
Thus,~$y^* \defequal \gamma(x^*)$ is a generalized global minimizer of~$\varphi$ and~$\lim_{y = \gamma(x \nearrow x^*)} \varphi(y) = 0$.
Assumptions~\ref{assumption:pof_minimal} and~\ref{assumption:pof_additional} hold.

We test eight instances of the \cdsm minimizing~$\Phi$.
Table~\ref{table:application_nonlinear_results} shows their convergence towards~$x^*$.
All instances starting with~$x^0 < x^*$ return~$\widetilde{x}^* \in [x^*-2E^{-10},x^*]$, which approaches~$x^*$ closely and from the direction minimizing~$\Phi$.
Hence, they provide~$\widetilde{y}^* \defequal \gamma(\widetilde{x}^*) \approx y^*$ with~$\varphi(\widetilde{y}^*) \approx \inf \varphi(Y)$, which agrees with Theorem~\ref{theorem:solving_P}.
The instance starting with~$x^0 \defequal 3\sqrt{2} \approx 4.24 > x^*$ behaves similarly.
The last three instances, which start with~$x^0 \gg 4$, converge instead to~$\widetilde{x}^* \approx 9.27$, but this is consistent with Theorem~\ref{theorem:solving_P} since~$\Phi$ has a local minimizer around~$9.27$ so~$\varphi$ has a local minimizer around~$\gamma(9.27)$.

\begin{table}[!ht]
    \centering
    \begin{tabular}{||c||c|c|c|c||}\hline
\hline        $x^0$ & $1^{st}~ \hat{x}^k \in \left[\pm5E^{-03}\right]$ & $1^{st}~ \hat{x}^k \in \left[\pm5E^{-06}\right]$ & $1^{st}~ \hat{x}^k \in \left[\pm5E^{-09}\right]$ & returned~$\hat{x}^k$ \\ \hline
\hline       $-e^2$ & $\hat{x}^{107} = -4.17E^{-03}$ & $\hat{x}^{120} = -2.20E^{-06}$ & $\hat{x}^{138} = -1.68E^{-09}$ & $\hat{x}^{146} = -4.65E^{-11}$ \\
\hline       $-\pi$ & $\hat{x}^{026} = -1.76E^{-03}$ & $\hat{x}^{043} = -1.70E^{-06}$ & $\hat{x}^{057} = -3.88E^{-09}$ & $\hat{x}^{065} = -4.14E^{-11}$ \\
\hline  $-\sqrt{2}$ & $\hat{x}^{024} = -4.25E^{-04}$ & $\hat{x}^{040} = -4.95E^{-06}$ & $\hat{x}^{051} = -1.40E^{-09}$ & $\hat{x}^{062} = -8.01E^{-12}$ \\
\hline         $+e$ & $\hat{x}^{009} = -4.68E^{-04}$ & $\hat{x}^{026} = -2.78E^{-06}$ & $\hat{x}^{043} = -3.13E^{-09}$ & $\hat{x}^{051} = -1.06E^{-10}$ \\
\hline $+3\sqrt{2}$ & $\hat{x}^{017} = -2.66E^{-03}$ & $\hat{x}^{030} = -1.33E^{-07}$ & $\hat{x}^{046} = -2.41E^{-09}$ & $\hat{x}^{053} = -8.40E^{-11}$ \\
\hline      $+2e^2$ &               /                &               /                &               /                & $\hat{x}^{201} = +5.27E^{+00}$ \\
\hline      $+4\pi$ &               /                &               /                &               /                & $\hat{x}^{201} = +5.27E^{+00}$ \\
\hline       $+e^3$ &               /                &               /                &               /                & $\hat{x}^{201} = +5.27E^{+00}$ \\
\hline\hline
\end{tabular}
    \caption{Results for eight instances of Algorithm~\ref{algo:cdsm} in the context of Section~\ref{section:applications/nonlinear}. In this table,~$\hat{x}^k \defequal x^k-4$. For each of the last three instances, we arbitrarily return~$x^{201} \approx 9.26779505$ since~$(x^k)_{k \geq 40}$ has its first~$9$ digits constant in all cases.}
    \label{table:application_nonlinear_results}
\end{table}

\begin{figure}[!ht]
    \centering
    \hfill
    \includegraphics[width=0.43\linewidth, trim=0 15 15 40, clip]{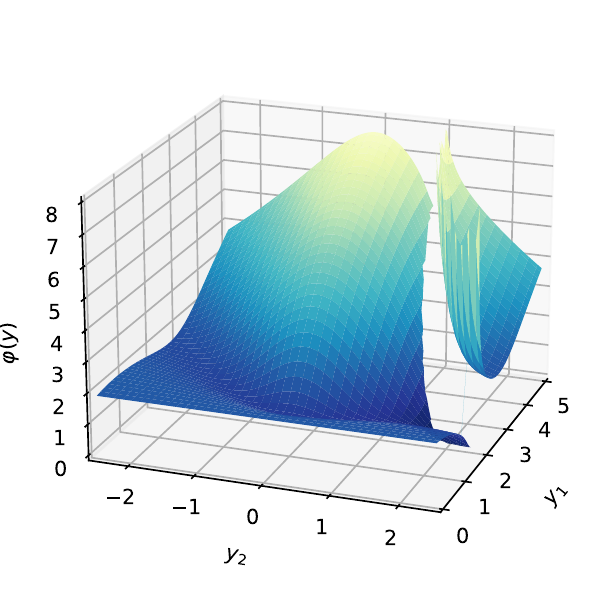}
    \hfill
    \includegraphics[width=0.43\linewidth, trim = 5 5 5 5,clip]{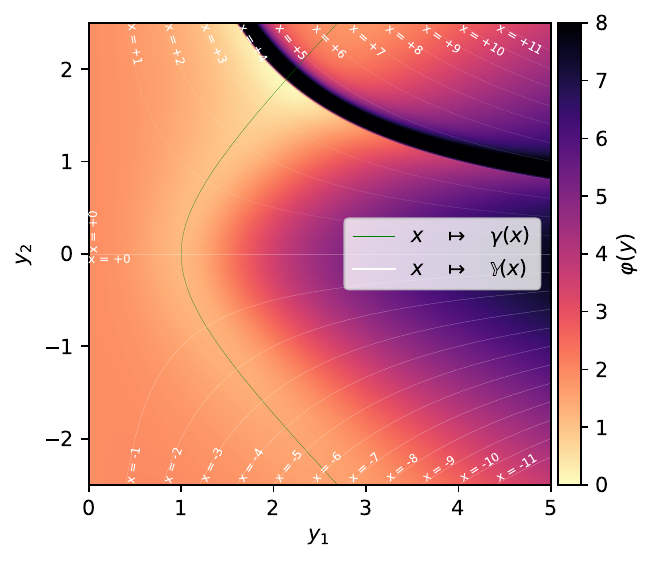}
    \caption{Function~$\varphi$ in Section~\ref{section:applications/nonlinear}, (left) 3D view, (right) 2D view, partition of~$\Ybb$ and the locus of~$\gamma$.}
    \label{figure:application_nonlinear_phi}
\end{figure}

\begin{figure}[!ht]
    \centering
    \includegraphics[width=0.49\linewidth, trim = 8 10 7 7,clip]{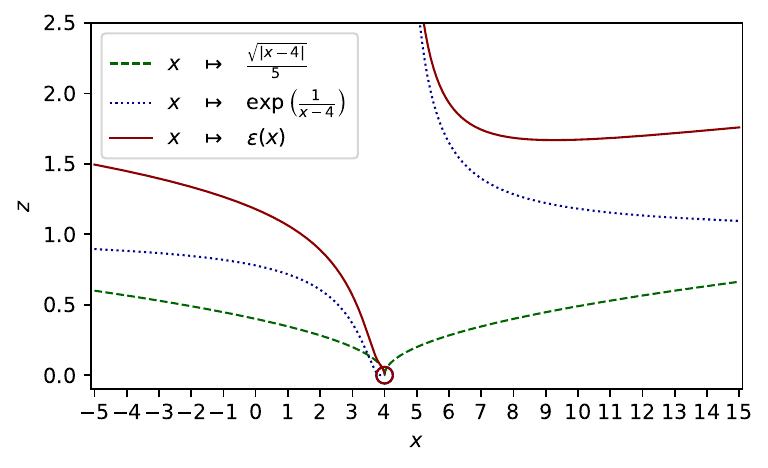}
    \hfill
    \includegraphics[width=0.49\linewidth, trim = 8 10 7 7,clip]{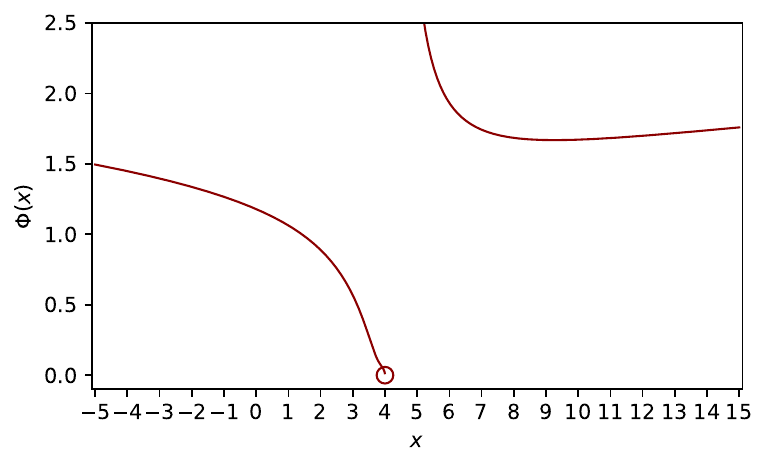}
    \caption{Functions~$\varepsilon$ and~$\Phi$ in Section~\ref{section:applications/nonlinear}, (left) function~$\varepsilon$ and its components, (right) function~$\Phi$. In this section,~$\Phi = \varepsilon$. Note that neither $\varepsilon$ nor $\Phi$ have a global minimum.}
    \label{figure:application_nonlinear_Phi}
\end{figure}

\subsection{Bi-dimensional noise and oracle provided by an algorithmic method}
\label{section:applications/dim2}

Consider~$\Ybb = Y \defequal \Rbb^3$, and~$\Omega \defequal \Ybb$, and the function
\begin{equation*}
    \fct{\varphi}{y}{\Ybb}
    {\norm{y}_\infty + \varepsilon\left(f(y)\right),}{\Rbb}
\end{equation*}
where~$f$, defined by~$f(y) \defequal (y_2-y_1^3,y_1-y_3^3)$ for all~$y \defequal (y_1,y_2,y_3) \in \Ybb$, is explicitly known while the blackbox~$\varepsilon$ is defined by~$\varepsilon(x) \defequal (\frac{\sin(10\pi(x_2-x_1^3))}{5} + \frac{\sin(6\pi(x_2-e^{-x_1}+1))}{7} + \frac{\sin(12\pi(x_1^2+x_2^2)^{\frac{1}{2}})}{11})^2$ for all~$x \in \Rbb^2$.
The global minimizer of~$\varphi$ is~$y^* \defequal (0,0,0)$, with~$\varphi(y^*) = 0$.
In this context, we define the partition sets of~$\Ybb$ as~$\Ybb(x) \defequal \{y \in \Ybb : f(y) = x\}$ for all~$x \in \Xbb = X \defequal \Rbb^2$, so~$\varepsilon_{|\Ybb(x)}$ is constant.
For all~$x \defequal (x_1,x_2) \in \Xbb$, some calculation allows us to rewrite~$\Ybb(x) = \{(t,t^3+x_1,\sqrt[3]{t-x_2}),~ t \in \Rbb\}$, so the unique minimizer of~$\varphi_{|\Ybb(x)}$ is the solution to the problem
\begin{equation*}
    \problemoptimfree{\minimize}{y \in \Ybb(x)}{\varepsilon(x) + \max\left\{\abs{t_x(y)}, \abs{t_x(y)^3+x_1}, \abs{\sqrt[3]{t_x(y)-x_2}}\right\}},
\end{equation*}
where, for any~$y \in \Ybb(x)$,~$t_x(y)$ denotes the unique~$t \in \Rbb$ such that~$s_x(t) \defequal (t,t^3+x_1,\sqrt[3]{t-x_2}) = y$.
This solution is hardly tractable.
However, the problem admits a smooth exact reformulation given by
\begin{equation*}
    \problemoptim{\minimize}{M \geq 0,~ T \in \Rbb}{\varepsilon(x) + M}
    {T \in I_x^1(M) \defequal \left[-M,M\right], \\
     T \in I_x^2(M) \defequal \left[\sqrt[3]{-M-x_1},\sqrt[3]{M-x_1}\right], \\
     T \in I_x^3(M) \defequal \left[-M^3+x_2,M^3+x_2\right].}
\end{equation*}
Its unique solution, denoted by~$(M(x),T(x))$, provides
\begin{equation*}
    \gamma(x) \defequal s_x(T(x)),
    \quad \mbox{with} \quad
    \varphi(\gamma(x)) = \varepsilon(x) + M(x).
\end{equation*}
Figure~\ref{figure:application_dim2_phi} shows~$\varphi$, Figure~\ref{figure:application_dim2_partition} draws the partition of~$\Ybb$ and the locus of~$\gamma$, and Figure~\ref{figure:application_dim2_Phi} (left) and (centre) represent respectively~$\varepsilon$ and~$M$.
In this context,~$\Phi$ equals
\begin{equation*}
    \fct{\Phi}{x}{\Xbb}{\varepsilon(x)+M(x),}{\Rbb}
\end{equation*}
pictured on Figure~\ref{figure:application_dim2_Phi} (right).
Its minimizer is~$x^* \defequal (0,0)$, with~$\Phi(x^*) = 0$.
Assumptions~\ref{assumption:pof_minimal} and~\ref{assumption:pof_additional} hold.

Note that~$(M(x),T(x))$ remains intractable analytically.
Nevertheless, we observe that
\begin{equation*}
    \begin{array}{ccl}
    M(x) & = &
        \min\left\{M \in \Rbb_+ \ : \ I_x(M) \defequal I_x^1(M) \cap I_x^2(M) \cap I_x^3(M) \neq \emptyset\right\}, \\
    \left\{ T(x) \right\} & = &
        I_x\left(M(x)\right),
    \end{array}
\end{equation*}
so~$M(x)$ is obtainable by dichotomic search and the singleton~$\{T(x)\}$ follows.
In practice, running sufficiently many iterations of the dichotomic search provides an accurate approximation of~$M(x)$.
We define~$M^0_\mathrm{inf} \defequal \floor{M(x)}$ and~$M^0_\mathrm{sup} \defequal M^0_\mathrm{inf}+1$, and~$\ell \defequal 0$, and we iterate the dichotomic search on~$[M^\ell_\mathrm{inf},M^\ell_\mathrm{sup}]$ while~$M^\ell_\mathrm{sup}-M^\ell_\mathrm{inf} > 2^{-30}$.
This allows to define~$\hat{M}(x) \defequal \frac{1}{2}(M^\ell_\mathrm{inf}+M^\ell_\mathrm{sup}) \approx M(x)$, and then~$\hat{T}(x) \defequal \frac{1}{2}(\min I_x(\hat{M}(x)) + \max I_x(\hat{M}(x))) \approx T(x)$, and finally~$\hat{\gamma}(x) \defequal s_x(\hat{T}(x)) \approx \gamma(x)$.

A total of eight instances of the \cdsm are run, minimizing~$\hat{\Phi} \defequal \varepsilon + \hat{M}$ since we approximate~$M \approx \hat{M}$ for tractability.
Table~\ref{table:application_dim2_results} shows the convergence towards~$x^*$.
All instances return~$\norm{\widetilde{x}^*-x^*} \leq 9E^{-7}$, hence they provide~$\widetilde{y}^* \defequal \hat{\gamma}(\widetilde{x}^*)$ such that~$\norm{\widetilde{y}^*-y^*} \leq 9E^{-7}$ and~$\abs{\varphi(\widetilde{y}^*)-\varphi(y^*)} \leq 9E^{-7}$.
Five instances even return~$\norm{\widetilde{x}^*-x^*} \leq 5E^{-10}$.
These behaviours agree with Theorem~\ref{theorem:solving_P}, even though the problem we actually solve slightly differs from the true problem.

{\renewcommand\arraystretch{1.3}
\begin{table}[!ht]
    \centering
    \begin{tabular}{||c||c|c|c|c||}\hline
\hline        $x^0$ & $1^{st}~ \hat{x}^k \leq 5E^{-02}$ & $1^{st}~ \hat{x}^k \leq 5E^{-05}$ & $1^{st}~ \hat{x}^k \leq 5E^{-08}$ & returned~$\hat{x}^k$ \\ \hline
\hline     $(-2,2)$ & $\hat{x}^{032} = 3.52E^{-02}$ & $\hat{x}^{110} = 4.03E^{-05}$ & $\hat{x}^{194} = 4.79E^{-08}$ & $\hat{x}^{220} = 1.10E^{-08}$ \\
\hline$(\frac{-1}{100},e^2)$ & $\hat{x}^{037} = 2.02E^{-02}$ & $\hat{x}^{100} = 3.99E^{-05}$ & $\hat{x}^{176} = 1.10E^{-08}$ & $\hat{x}^{198} = 9.03E^{-10}$ \\
\hline$(\frac{-\pi}{2},\frac{7}{4})$ & $\hat{x}^{027} = 6.34E^{-03}$ & $\hat{x}^{095} = 4.60E^{-05}$ & $\hat{x}^{177} = 1.23E^{-08}$ & $\hat{x}^{195} = 5.51E^{-09}$ \\
\hline$(\frac{-\pi}{4},e^{\frac{1}{2}})$ & $\hat{x}^{040} = 3.66E^{-02}$ & $\hat{x}^{126} = 1.46E^{-05}$ & $\hat{x}^{235} = 3.45E^{-08}$ & $\hat{x}^{253} = 4.54E^{-09}$ \\
\hline$(\frac{1}{4},\frac{1}{4})$ & $\hat{x}^{022} = 2.85E^{-02}$ & $\hat{x}^{119} = 2.90E^{-05}$ & $\hat{x}^{193} = 1.84E^{-08}$ & $\hat{x}^{212} = 2.88E^{-09}$ \\
\hline$(\frac{3\pi}{2},\frac{1}{\sqrt{8}})$ & $\hat{x}^{039} = 3.29E^{-02}$ & $\hat{x}^{124} = 1.96E^{-05}$ & $\hat{x}^{186} = 4.86E^{-08}$ & $\hat{x}^{226} = 5.22E^{-09}$ \\
\hline $(e^2,2\pi)$ & $\hat{x}^{036} = 3.43E^{-02}$ & $\hat{x}^{109} = 3.00E^{-05}$ & $\hat{x}^{171} = 4.40E^{-08}$ & $\hat{x}^{211} = 8.19E^{-09}$ \\
\hline$(e^2,\frac{-1}{11})$ & $\hat{x}^{029} = 1.48E^{-02}$ & $\hat{x}^{093} = 4.77E^{-05}$ & $\hat{x}^{148} = 2.48E^{-08}$ & $\hat{x}^{172} = 1.48E^{-09}$ \\
\hline\hline
\end{tabular}
    \caption{Results for eight instances of Algorithm~\ref{algo:cdsm} in the context of Section~\ref{section:applications/dim2}. In this table,~$\hat{x}^k \defequal \textnorm{x^k}_\infty$.}
    \label{table:application_dim2_results}
\end{table}
}

\begin{figure}[!ht]
    \centering
    \includegraphics[width=0.8\linewidth]{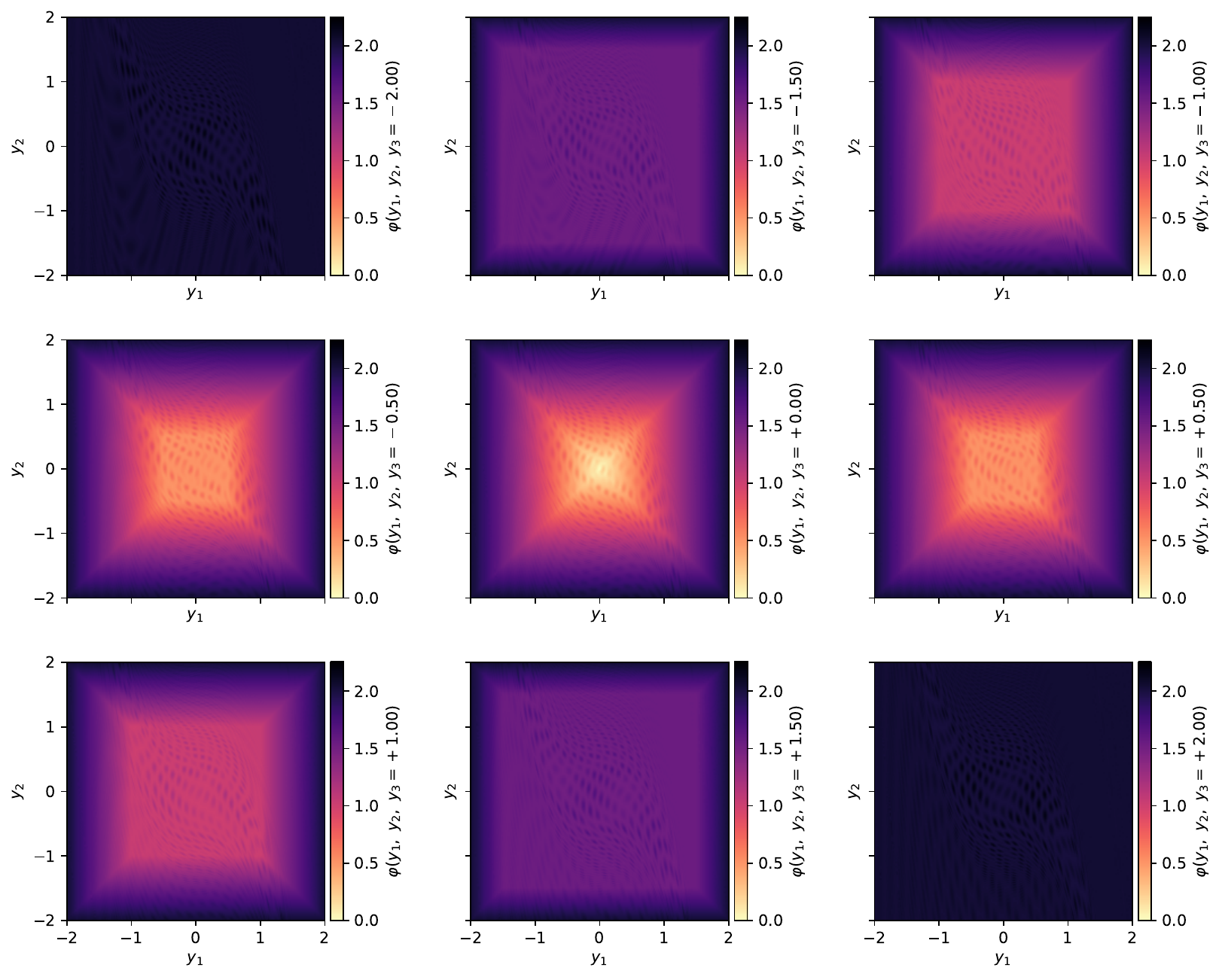}
    \caption{Function~$\varphi$ in Section~\ref{section:applications/dim2}, restriction over planes with constant third variable.}
    \label{figure:application_dim2_phi}
\end{figure}

\begin{figure}[!ht]
    \centering
    \includegraphics[width=0.8\linewidth, trim = 0 130 0 150,clip]{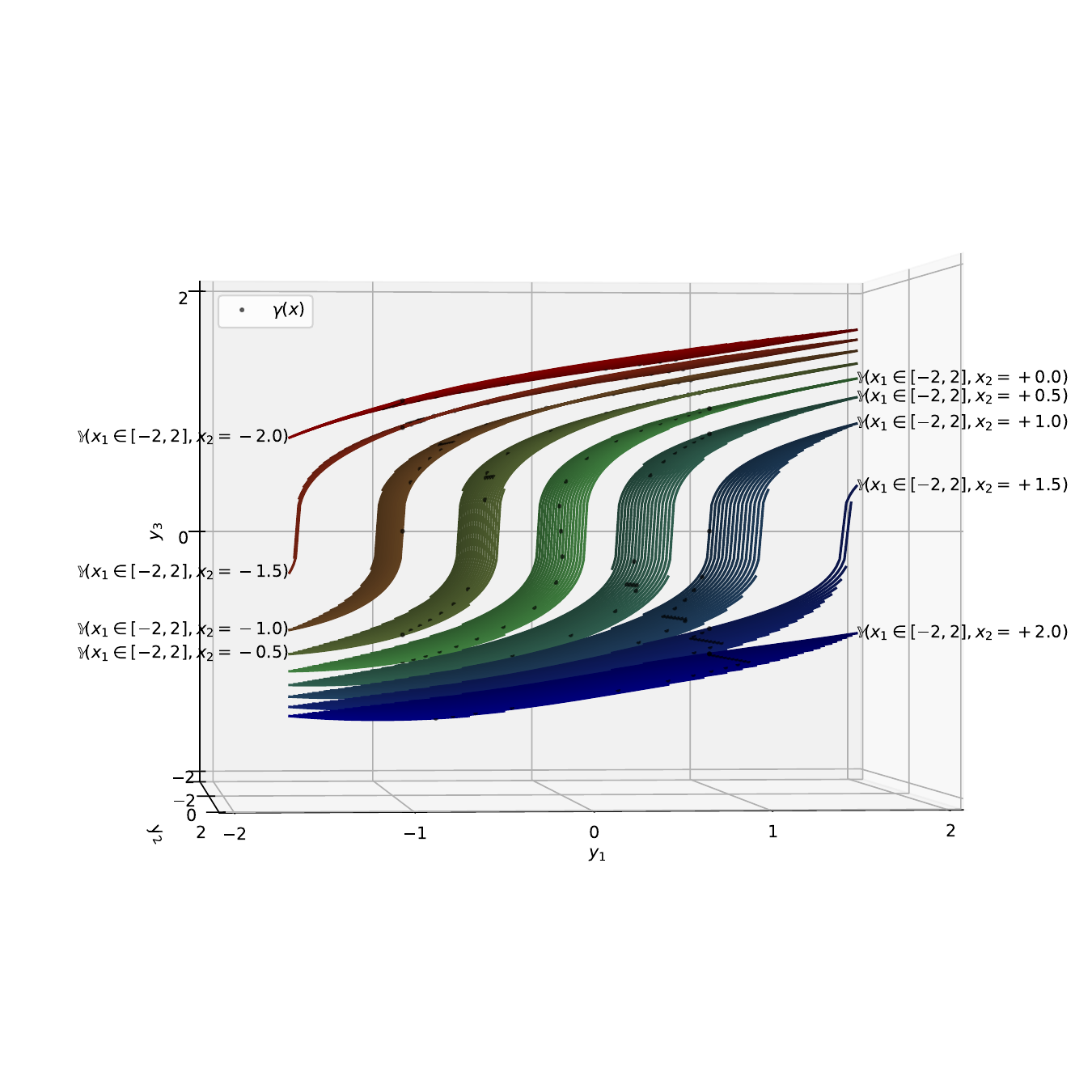}
    \caption{Partition sets of~$\Ybb(x)$ and locus of~$\gamma$ in the context of Section~\ref{section:applications/dim2}.}
    \label{figure:application_dim2_partition}
\end{figure}

\begin{figure}[!ht]
    \centering
    \includegraphics[width=0.32\linewidth, trim= 5 5 10 5, clip]{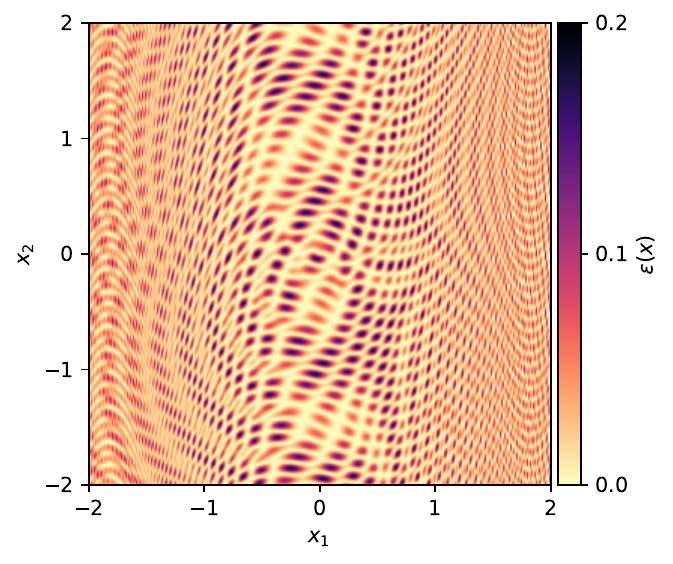}
    \hfill
    \includegraphics[width=0.32\linewidth, trim= 5 5 10 5, clip]{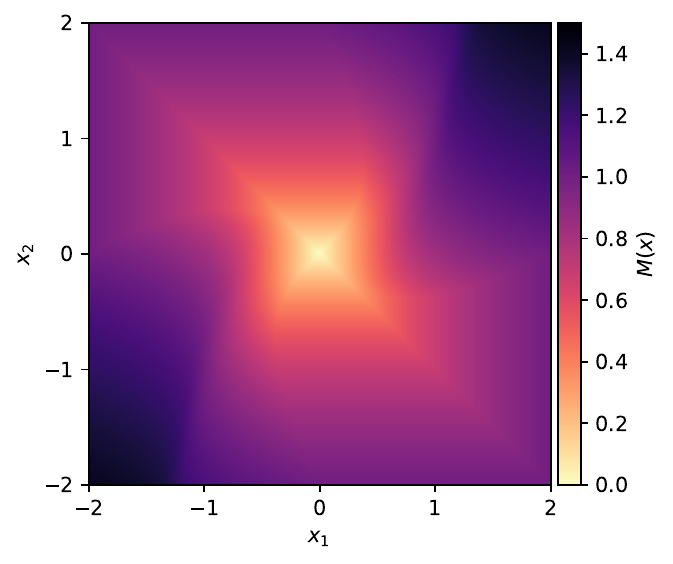}
    \hfill
    \includegraphics[width=0.32\linewidth, trim= 5 5 10 5, clip]{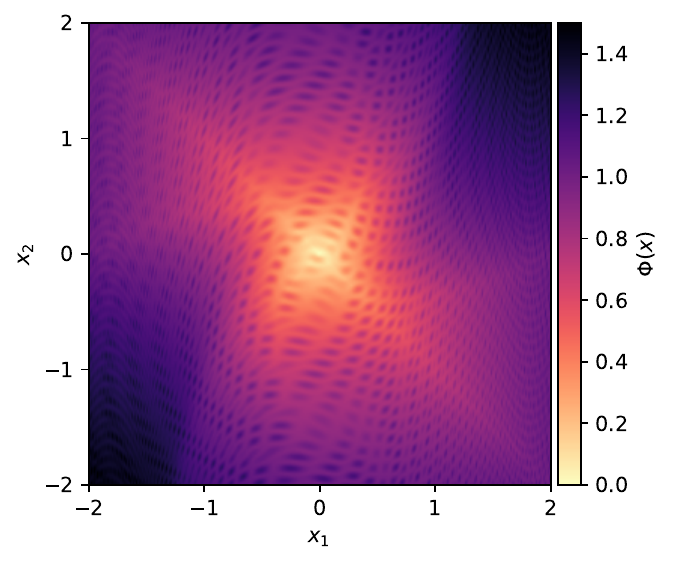}
    \caption{Functions (left)~$\varepsilon$, (centre)~$M$ and (right)~$\Phi$ in Section~\ref{section:applications/dim2}. In this section~$\Phi = \varepsilon + M$.}
    \label{figure:application_dim2_Phi}
\end{figure}

\section{Gain of performance provided by the \pom}
\label{section:applications_heavy}

In this section, we tackle composite greybox problems where the \pof allows for Reformulation~\eqref{problem:P_reformulated} with~$\dim(\Xbb)$ small (equal to the number of inputs of the blackboxes~$\sigma$ and~$\varepsilon$) even though~$\dim(\Ybb)$ is possibly large.
We show that the \pom is efficient in this context.
We alter the problems from Section~\ref{section:applications} so that~$\dim(\Ybb) \approx 100$ while~$\dim(\Xbb) \leq 10$.
Let us stress that our choice to consider~$\dim(\Ybb) \approx 100$ at most stands only for limited computational effort:~$\dim(\Ybb) \approx 100$ is already larger than what many \dfo algorithms tackle, and our examples could be adapted to match any~$0 < \dim(\Xbb) \leq \dim(\Ybb) \leq +\infty$ (we even have~$\dim(\Xbb) \leq 4 \ll \dim(\Ybb) = +\infty$ in~\cite{BoAuBo21PWMayerCost}).

In our examples below, computing~$\varphi(y)$,~$\gamma(x)$ and~$\Phi(x)$ is very fast for all~$y \in \Ybb$ and~$x \in \Xbb$.
This likely differs from real cases, where~$\varphi(y)$ may be computed by an expensive computer program and~$\gamma(x)$ may be obtained by numerically solving an optimization subproblem.
Thus, to establish a realistic comparison between the solvers, we proceed as follows.
For each problem, we consider that the cost to compute~$\varphi(y)$ is~$1$ unit, identical for all~$y \in \Ybb$, and that the computation of~$\Phi(x)$ costs~$1+\tau$ units, identical for all~$x \in \Xbb$ and where~$\tau \geq 0$ is fixed.
The value~$\tau$ captures the relative cost to evaluate~$\gamma$ versus those to evaluate~$\varphi$, so the computational cost of~$\Phi(x)$ models the additional cost required to first compute~$\gamma(x)$ before computing~$\varphi(\gamma(x))$.
Then, for each method, we track the sequence of all evaluated points.
This consists in evaluations of~$\varphi$ for solvers tackling Problem~\eqref{problem:P_initial} directly, and of~$\Phi$ for our method focusing on Reformulation~\eqref{problem:P_reformulated} instead.
Finally, for each method we plot the graph representing the lowest objective value found within a budget of~$c$ units, for all~$c \in \Rbb_+$.

We compare the \pom against the \dfo solvers \nomad~\cite{nomad4paper} and \prima~\cite{PRIMA_solver} solving Problem~\eqref{problem:P_initial} directly.
We observe that the \pom significantly outperforms both \nomad and \prima, unless~$\tau$ is prohibitively large.
Note that this does not discredit these solvers: as we discuss in Section~\ref{section:literature}, they are not designed for contexts suited for the \pof, and other \dfo techniques~\cite{GrRoViZh2015,LaMe24structureAware,LaMeWi2019,LaMeZh21ManifoldComposite,RoRo23ReducedSpaces} may also perform poorly in these contexts.
Similarly, our results do not cast light on our instance of the \cdsm in the \pom.
What makes the \pom efficient is the sheer dimension reduction provided by the \pof.

We follow the \textit{extreme barrier} approach~\cite{AuDe2006} that minimizes~$\barrier{\varphi}{\Omega}$ over~$\Ybb$, with relevant box constraints.
We run \nomad with default parameters, \prima with the \textsc{BObyQA} algorithm, and the \pom as in Remark~\ref{remark:algo_parameters}.
We give to \nomad and \prima six random starting points~$(y^0_\ell)_{\ell \in \llb1,6\rrb}$, and we give to our \cdsm the starting points~$(\chi(y^0_\ell))_{\ell \in \llb1,6\rrb}$.
In this section, we denote by~$\1 \defequal (1, \dots, 1)$.

\subsection{101-dimensional problem with mono-variable noise}
\label{section:applications_heavy/monovariable_heavy}

Consider an alteration of the example from Section~\ref{section:applications/monovariable} given by~$\Ybb = Y \defequal \Rbb^{101}$, and~$\Omega \defequal \Ybb$, and
\begin{equation*}
    \fct{\varphi}{y \defequal (y_i)_{i=0}^{100}}{\Ybb}
    {\norm{y-\sigma(y_0)}^2 + \varepsilon(y_0),}{\Rbb}
\end{equation*}
where~$\varepsilon(x) \defequal \abs{x}(1+\sin(\frac{2\pi}{x})^2)^{\frac{1}{2}}+\abs{\floorceil{x}}$ if~$x \in \Rbb^*$ and~$\varepsilon(0) \defequal 0$ as in Section~\ref{section:applications/monovariable}, and where
\begin{equation*}
    \fct{\sigma}{x}{\Rbb}{\left(~
        x ~,~
        \left(2\left(1+\frac{i-1}{5}\right)\floorceil{\frac{x}{i}}\right)_{i=1}^{25} ~,~
        \left(25\sin\left(\frac{i-25}{5}\pi x\right)\right)_{i=26}^{50} ~,~
        \left(x-\frac{10}{i}\right)_{i=51}^{75} ~,~
        \left(\frac{i}{10}\right)_{i=76}^{100}
    ~\right).}{\Rbb^{101}}
\end{equation*}
We consider~$\sigma$ and~$\varepsilon$ as blackboxes, so their expression is assumed to be unknown to all methods, but we remark that the global minimizer of~$\varphi$ is~$y^* \defequal \sigma(0)$, with~$\varphi(y^*) = 0$.
We define~$\Ybb(x) \defequal \{x\}\times\Rbb^{100}$ for all~$x \in \Xbb \defequal \Rbb$, since therefore~$\varphi_{|\Ybb(x)}$ admits an easily tractable global minimizer as
\begin{equation*}
    \gamma(x) \defequal \sigma(x), \quad \mbox{with} \quad \varphi(\gamma(x)) = \varepsilon(y_0).
\end{equation*}
Thus, Assumptions~\ref{assumption:pof_minimal} and~\ref{assumption:pof_additional} hold and~$\Phi = \varepsilon$ is as in Section~\ref{section:applications/monovariable}.
Figure~\ref{figure:application_monovariable_heavy} compares the three methods.

The \pom outperforms \nomad and \prima tackling Problem~\eqref{problem:P_initial} directly.
Our \cdsm closely approaches the global minimizer while \nomad and \prima remain in an exploratory phase that returns poor results, and the computational budget is comparable when~$\tau = 10$.

\begin{figure}[!b]
    \centering
    \includegraphics[width=0.9\linewidth]{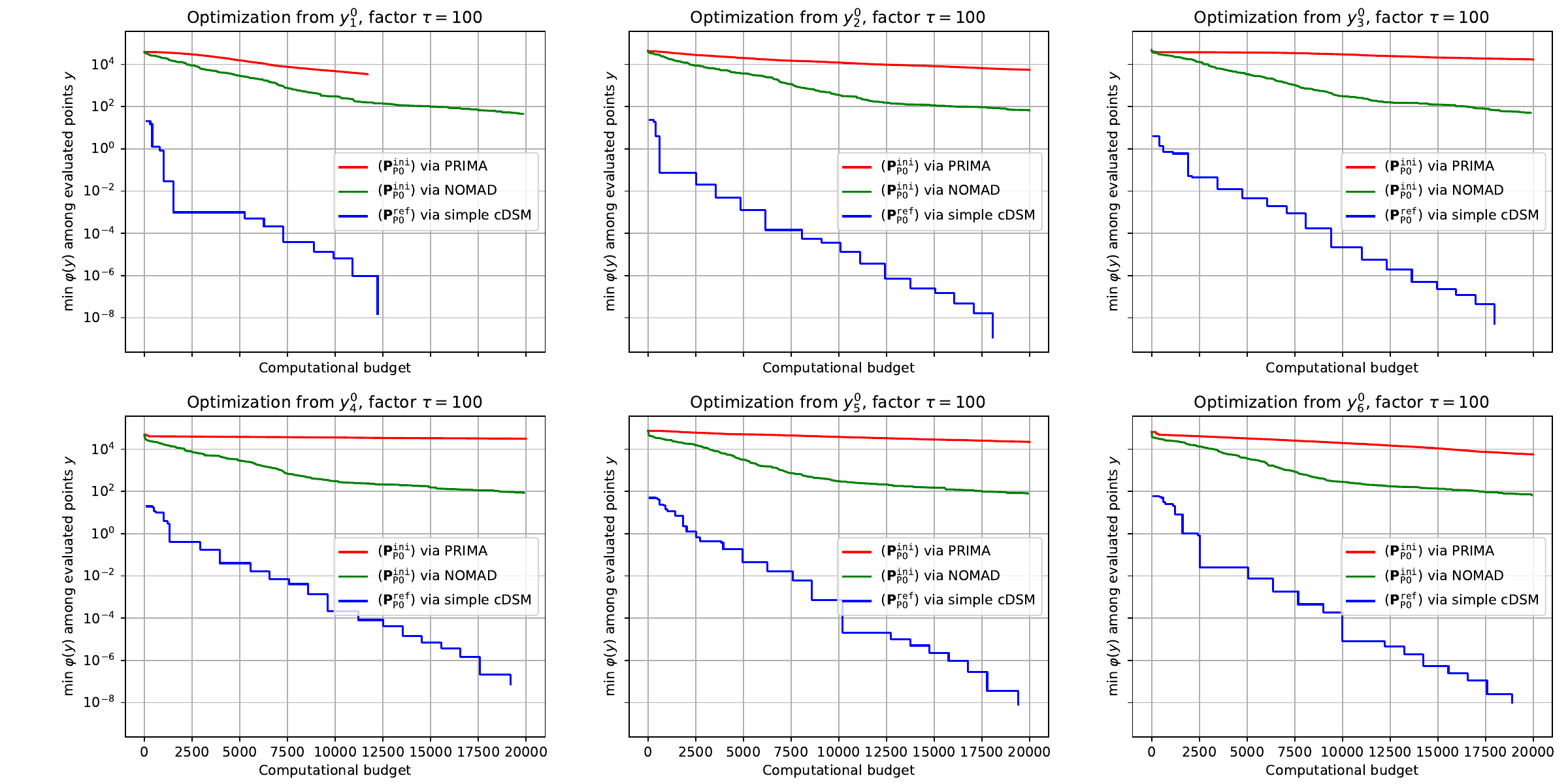}
    \caption{Comparison of the best solution found by each solver depending on the computational budget spent. The six starting points~$y^0_i$,~$i \in \llb1,6\rrb$, are chosen as $6$ observations of the random uniform independent distribution over~$[-30,30]^{101}$.}
    \label{figure:application_monovariable_heavy}
\end{figure}

\subsection{Radial noise in a 101-dimensional problem}
\label{section:applications_heavy/radial_heavy}

Consider an alteration of the problem from Section~\ref{section:applications/radial}, with~$\Ybb = Y = \Omega \defequal \Rbb_+\times[0,2\pi[^{100}$ and
\begin{equation*}
    \fct{\varphi}{\left(r,(\theta_i)_{i=1}^{100}\right) }{\Ybb}
    {\dfrac{\sqrt{r}}{100}\dsum{i=1}{100}\sin\left(\dfrac{\theta_i-\sigma_i(r)}{2}\right)^2 + \varepsilon(r),}{\Rbb}
\end{equation*}
where~$\sigma(x) \defequal (2\pi\log_{i+1}(x))_{i=1}^{100}$ for all~$x > 0$ and~$\sigma(0) \defequal 0\1$, and~$\varepsilon(x) \defequal \sqrt{\textabs{x^2-2}} + \frac{\sin(10\pi(x-\sqrt{2}))^2}{10}$ for all~$x \in \Rbb_+$, are two blackboxes.
The global minimizer of~$\varphi$ is~$y^* \defequal (\sqrt{2},\sigma(\sqrt{2}))$, with~$\varphi(y^*) = 0$.
We define~$\Ybb(x) \defequal \{x\}\times[0,2\pi[^{100}$ for all~$x \in \Xbb = X \defequal \Rbb_+$.
Then,~$\varphi_{|\Ybb(x)}$ is a smooth function of~$\theta$ while~$r = x$ is fixed, and its global minimizer is
\begin{equation*}
    \gamma(x) \defequal (x,\sigma(x)\sim2\pi),
    \quad \mbox{with} \quad
    \varphi(\gamma(x)) = \varepsilon(x),
\end{equation*}
where~$\sigma(x) \sim 2\pi$ denotes the component-wise residual of~$\sigma(x)$ modulo~$2\pi$.
Thus, Assumptions~\ref{assumption:pof_minimal} and~\ref{assumption:pof_additional} hold and~$\Phi = \varepsilon$ is as in Section~\ref{section:applications/radial}.
The comparison between the strategies is provided in Figure~\ref{figure:application_radial_heavy}.

The results are similar to those in Section~\ref{section:applications_heavy/monovariable_heavy}.
Even by considering~$\tau = 100$, the \pof provides a gain of performance.
Within a comparable budget, the \cdsm in our \pom converges towards the global minimizer, while \nomad and \prima solving the original problem remain in an exploratory phase that returns irrelevant points.
The very early stop of \nomad in three cases even shows that \nomad sometimes interrupts itself early on its poor incumbent solution.

\begin{figure}[!h]
    \centering
    \includegraphics[width=\linewidth]{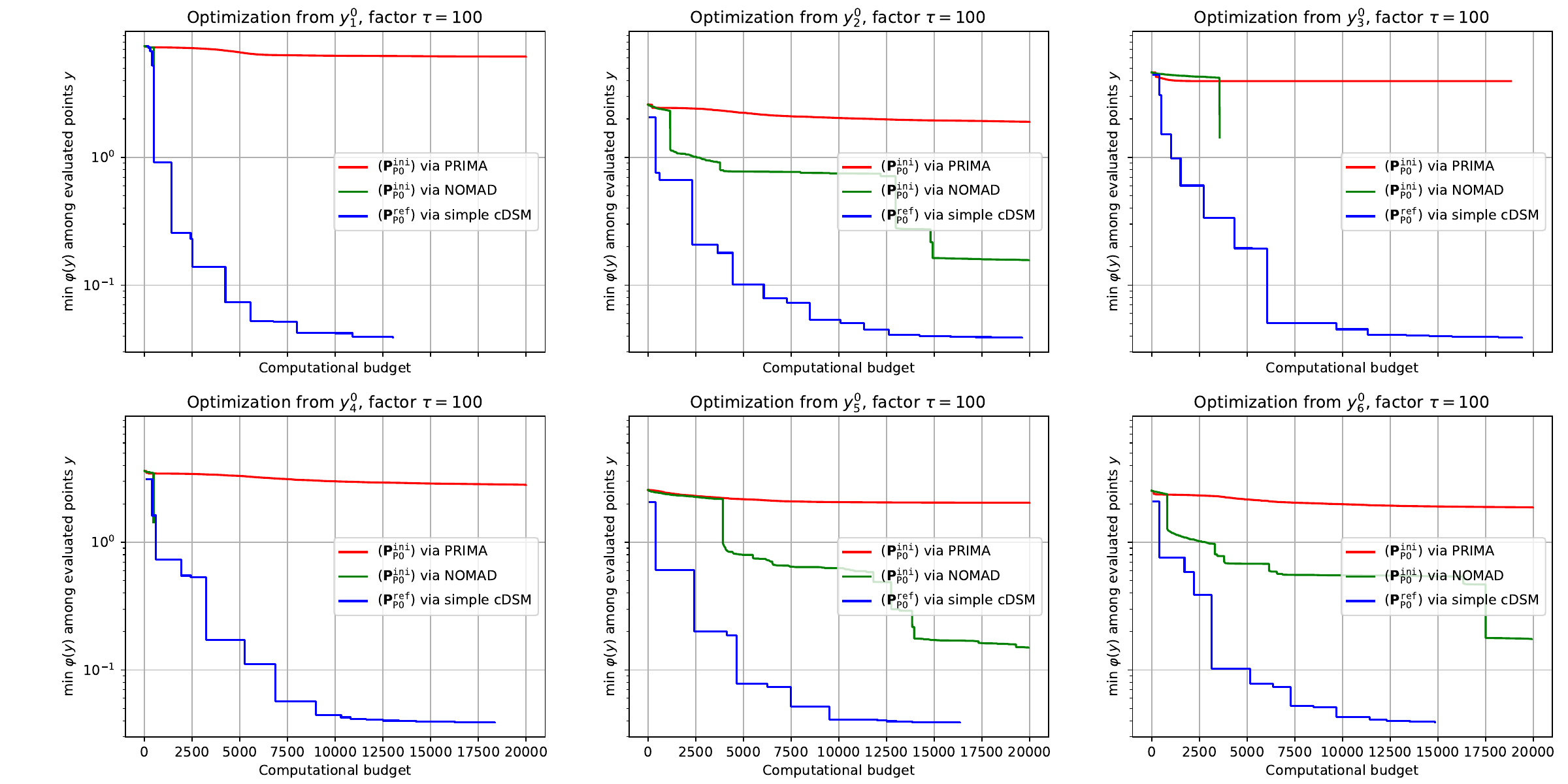}
    \caption{Comparison of the best solution found by each solver depending on the computational cost spent. The six starting points~$y^0_i$,~$i \in \llb1,6\rrb$, are chosen as $6$ observations of the random uniform independent distribution over~$[0,2\pi]^{101}$.}
    \label{figure:application_radial_heavy}
\end{figure}

\subsection{Noise affected by a nonlinear combination of all 100 variables}
\label{section:applications_heavy/nonlinear_heavy}

Consider~$\Ybb = Y \defequal \Rbb^{100}$, and~$\Omega \defequal \{y \in \Ybb : \forall i \in \llb1,100\rrb,~ y_{i+1} \geq y_i > 0\}$, and
\begin{equation*}
    \fct{\varphi}{y \defequal (y_i)_{i=1}^{100}}{\Ybb}
    {\dsum{(\ell,p)\in\llb0,19\rrb^2}{}\ln\left(1+\left(\dfrac{\pi_\ell(y)}{\pi_p(y)}-1\right)^2\right) + \varepsilon\left(f(y)\right),}{\Rbb}
\end{equation*}
where for all~$y \in \Ybb$,~$\pi_\ell(y) \defequal \prod_{i=5\ell+1}^{5(\ell+1)}y_i$ for each~$\ell \in \llb0,19\rrb$ and~$f(y) \defequal \frac{1}{5}\sum_{\ell=0}^{19}\pi_\ell(y)$ have explicit definition, and where the blackbox~$\varepsilon$ satisfies~$\varepsilon(x) \defequal \exp(\frac{1}{z-4}) + \frac{1}{5}\sqrt{\textabs{z-4}}$ if~$z \neq 4$ and~$\varepsilon(4) \defequal +\infty$.
Here,~$\varphi$ has no global minimizer, but~$\inf \varphi(\Omega) = 0$ and~$y^* \defequal \1$ is a generalized global solution (even though~$\varphi(y^*) = +\infty$).
For all~$x \in \Xbb = X \defequal \Rbb$, we define~$\Ybb(x) \defequal \{y \in \Ybb : f(y) = x\}$.
Then,~$\varphi_{|\Ybb(x)}$ is a smooth function since~$\varepsilon(f(y))$ is constant.
Moreover, its minimizer is analytically obtainable as follows.
For all~$x \in \Xbb$,~$\varphi_{|\Ybb(x)}(y) \geq 0$ and equals~$0$ when all~$(\pi_\ell(y))_{\ell \in \llb0,19\rrb}$ are equal.
Under the constraint~$y \in \Omega$, equalling all~$(\pi_\ell(y))_{\ell \in \llb0,19\rrb}$ implies that~$y = \alpha\1$ for some~$\alpha \in \Rbb$.
Then,~$y \in \Ybb(x)$ leads to~$4\alpha^5 = x$.
The value of~$\alpha$ follow, and we get that
\begin{equation*}
    \gamma(x) \defequal \sqrtp{5}{\frac{x}{4}}\1,
    \quad \mbox{with} \quad
    \varphi(\gamma(x)) = \varepsilon(x).
\end{equation*}
Thus,~$\Phi = \varepsilon$ as in Section~\ref{section:applications/nonlinear}, and Assumptions~\ref{assumption:pof_minimal} and~\ref{assumption:pof_additional} hold.
Figure~\ref{figure:application_nonlinear_heavy} compares our strategies.

Similarly to Sections~\ref{section:applications_heavy/monovariable_heavy} and~\ref{section:applications_heavy/radial_heavy}, Reformulation~\eqref{problem:P_reformulated} is a \dfo problem fixing the value of a single combination of all the variables, and the \pof fixes the~$100$ variables of the original problem accordingly.
Even with~$\tau = 100$, the \pof provides a gain of performance.
\nomad and \prima solving the original problem both fail to significantly improve their initial incumbent solution and, in the second case, both solvers behave as if their initial incumbent solution cannot be improved.
Within a comparable budget, the \cdsm in our \pom converges towards the global minimizer on all but one case.
Yet, in this case, first we observe that this solution remains~$1000$ times better than those returned by \nomad and \prima, and second we claim that the poor performance results from our naive implementation: even if Reformulation~\eqref{problem:P_reformulated} is $1$-dimensional, it is preferable to use a globalization strategy because the interval of possibly relevant values is large.
Our instance lacks one since it lacks a \search step.

\begin{figure}[!h]
    \centering
    \includegraphics[width=\linewidth]{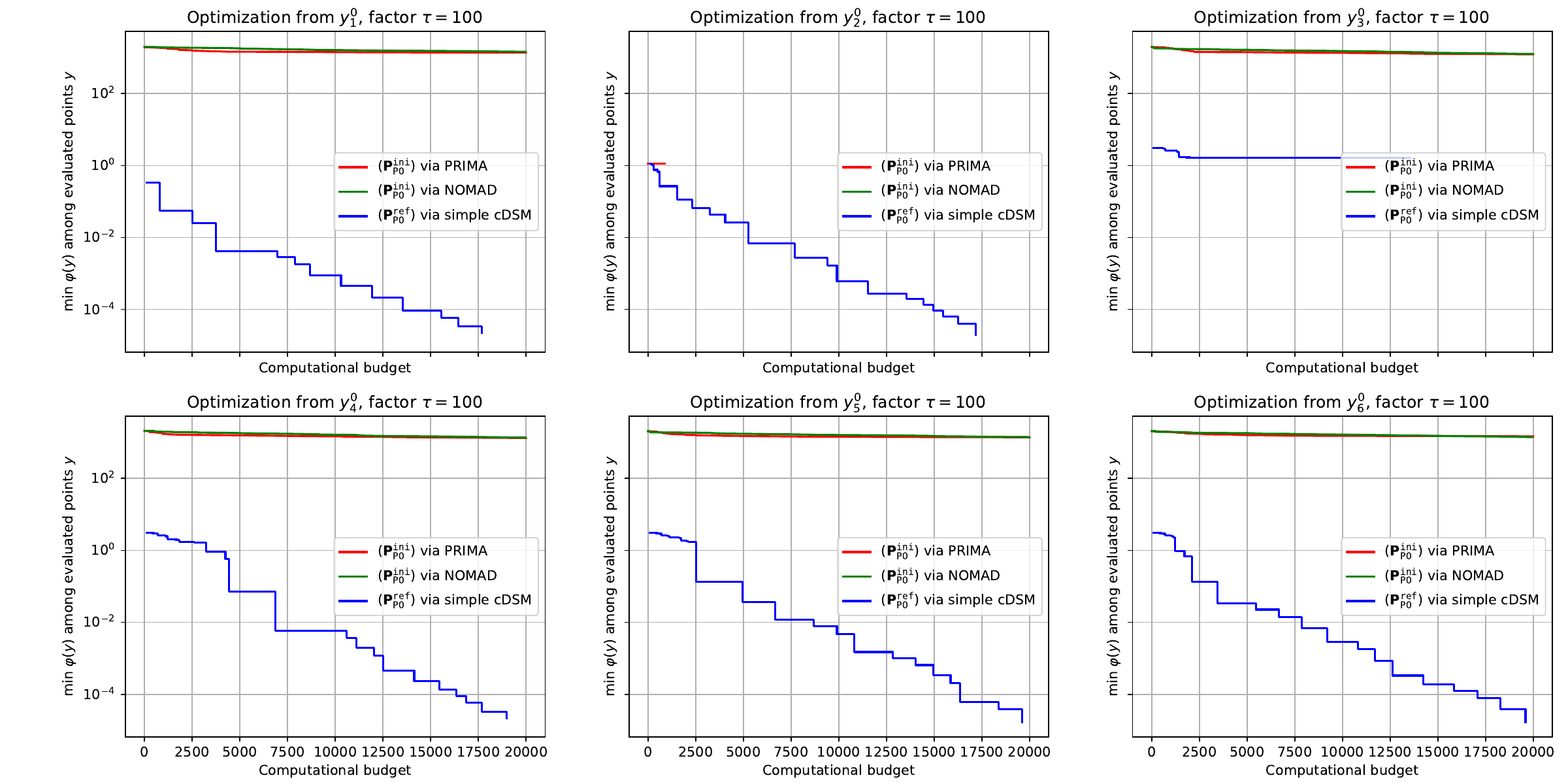}
    \caption{Comparison of the best solution found by each solver depending on the computational cost spent. The starting points are chosen as~$y^0_1 \defequal (i/100)_{i=1}^{100}$,~$y^0_2 \defequal 0.5\1$,~$y^0_3 \defequal (\frac{2i}{100})_{i=1}^{100}$, and~$y^0_i$, for each~$i \in \llb4,6\rrb$, is chosen randomly but such that for each~$j \in \llb1,100\rrb$, the $j^{th}$ component of~$y^0_i$ lies in~$[\frac{j-1}{100},\frac{j}{100}]$.}
    \label{figure:application_nonlinear_heavy}
\end{figure}

\subsection{Ten-dimensional noise with non-analytical oracle and 100 variables}
\label{section:applications_heavy/dim2_heavy}

Consider~$\Ybb = Y \defequal \Rbb^{100}$, and~$\Omega \defequal \Ybb$, and the function
\begin{equation*}
    \fct{\varphi}{y}{\Rbb^{100}}
    {\norm{y}_1 + \varepsilon(f(y)),}{\Rbb}
\end{equation*}
where the function~$f$ (with definition assumed accessible) is defined as follows, with~$\Xbb = X \defequal \Rbb^{10}$,
\begin{equation*}
    \fct{f}{(y_i)_{i=1}^{100}}{\Ybb}
    {\left(g_1(y_{10})-\dsum{i=1}{9}y_i,~ \dots,~ g_{10}(y_{100})-\dsum{i=91}{99}y_i\right),}{\Xbb}
\end{equation*}
with~$g_j(z) \defequal (z+(1+\frac{j}{10})^z-1)$ for all~$z \in \Rbb$ and all~$j \in \Nbb$, and where the blackbox~$\varepsilon$ is defined through
\begin{equation*}
    \fct{\varepsilon}{z = (z_j)_{j=1}^{10}}{\Xbb}
    {\left(
        \frac{\sin5\pi\left(z_2-z_1^3\right)}{5} +
        \frac{\sin6\pi\left(z_4-e^{-z_2-z_3}+1\right)}{7} +
        \frac{\sin7\pi\sqrt{z_5^2+z_6^2+z_7^2}}{11} +
        \frac{\sin8\pi z_8z_9z_{10}}{13}
     \right)^2.}
     {\Rbb}
\end{equation*}
The global minimizer of~$\varphi$ is~$y^* \defequal 0\1$, with~$\varphi(y^*) = 0$.
We partition~$\Ybb$ via~$\Ybb(x) \defequal \{y \in \Ybb : f(y) = x\}$.
Then, for all~$x \in \Xbb$, the constraint~$y \in \Ybb(x)$ fixes all variables~$(y_{10j})_{j=1}^{10}$ to~$g_j^{-1}(x_j+\sum_{i=10(j-1)+1}^{10(j-1)+9}y_i)$, and~$\varepsilon(f(y))$ to the constant~$\varepsilon(x)$, so~$\gamma(x)$ minimizing~$\varphi_{|\Ybb(x)}$ is the solution to the problem
\begin{equation*}
    \problemoptimfree{\minimize}{y \in \Ybb(x)}{\varepsilon(x) + \dsum{j=1}{10}\left(\dsum{i=10(j-1)+1}{10(j-1)+9}\abs{y_i} + \abs{g_j^{-1}\left(x_j+\dsum{i=10(j-1)+1}{10(j-1)+9}y_i\right)}\right).}
\end{equation*}
Then, observing that~$\frac{\mathrm{d}g_j^{-1}}{\mathrm{d}z}(z) \in {]}0,1{[}$ for all~$(j,z) \in \Nbb^* \times \Rbb$, a sensitivity analysis with respect to each sum~$\sum_{i=10(j-1)+1}^{10(j-1)+9}\abs{y_i}$ shows that
\begin{equation*}
    \gamma(x) \defequal (\gamma_i(x))_{i=1}^{100}
    \quad \mbox{where} \quad
    \gamma_i(x) \defequal \left\{
        \begin{array}{ll}
        0             & \mbox{if}~ i \notin 10\Nbb, \\
        g_j^{-1}(x_j) & \mbox{if}~ i = 10j,~ j \in \llb1,10\rrb,
        \end{array}
    \right.
\end{equation*}
with~$\varphi(\gamma(x)) = \varepsilon(x)+\dsum{j=1}{10}\abs{g_j^{-1}(x_j)}$.
As a result,~$\Phi$ equals
\begin{equation*}
    \fct{\Phi}{x \defequal (x_j)_{j=1}^{10}}{\Xbb}
    {\varepsilon(x)+\dsum{j=1}{10}\abs{g_j^{-1}(x_j)}.}{\Rbb}
\end{equation*}
Its global minimizer is~$x^* \defequal 0\1$, with~$\Phi(x^*) = 0$.
Assumptions~\ref{assumption:pof_minimal} and~\ref{assumption:pof_additional} hold.

Unfortunately,~$\gamma$ admits no analytical expression since, for all~$j \in \llb1,10\rrb$,~$g_j^{-1}$ cannot be expressed with elementary functions.
However, since~$g_j^{-1}(x_j)$ solves the equation~$(z+(1+\frac{j}{10})^z-1)-x_j = 0$ (with variable~$z \in \Rbb$), we approximately solve this equation using a dichotomic search to obtain an approximate solution~$\hat{g_j}^{-1}(x_j) \approx g_j^{-1}(x_j)$.
As a result, for all~$x \in X$, we approximate
\begin{equation*}
    \gamma(x) = (\gamma_i(x))_{i=1}^{100}\approx \hat{\gamma}(x) \defequal (\hat{\gamma_i}(x))_{i=1}^{100}
    \quad \mbox{where} \quad
    \hat{\gamma_i}(x) \defequal \left\{
        \begin{array}{ll}
        0                     & \mbox{if}~ i \notin 10\Nbb, \\
        \hat{g_j}^{-1}(x_j)   & \mbox{if}~ i = 10j,~ j \in \llb1,10\rrb,
        \end{array}
    \right.
\end{equation*}
and
\begin{equation*}
    \Phi \approx \hat{\Phi}
    \quad \mbox{where} \quad
    \fct{\hat{\Phi}}{x \defequal (x_j)_{j=1}^{10}}{\Xbb}
    {\varepsilon(x)+\dsum{j=1}{10}\abs{\hat{g_j}^{-1}(x_j)}.}{\Rbb}
\end{equation*}

Figure~\ref{figure:application_dim2_heavy} shows a comparison between the different strategies, where for tractability we actually minimize~$\hat{\Phi}$ instead of~$\Phi$.
even though the problem we actually solve slightly differs from the true problem, we observe results similar to those in Sections~\ref{section:applications_heavy/monovariable_heavy},~\ref{section:applications_heavy/radial_heavy} and~\ref{section:applications_heavy/nonlinear_heavy}.
Reformulation~\eqref{problem:P_reformulated} is a $10$-dimensional \dfo problem fixing the value of the important combinations of all the variables, and the \pof highlights how to fix the~$100$ variables of Problem~\eqref{problem:P_initial} accordingly.
Using our \cdsm, the \pom returns solutions~$100$ times better than those returned by \nomad and~$10^7$ times better than those returned by \prima, and the computational budget is similar if we allow~$\tau \approx 10$ at most.
The value~$\tau = 10$ is low, but we could consider a higher value by using a better algorithm.
For example, a test with \nomad solving the reformulated problem (instead of our naive instance of \cdsm) provides similar graphs with~$\tau = 30$, represented on Figure~\ref{figure:application_dim2_heavy_nomad}.

\begin{figure}[!ht]
    \centering
    \includegraphics[width=\linewidth]{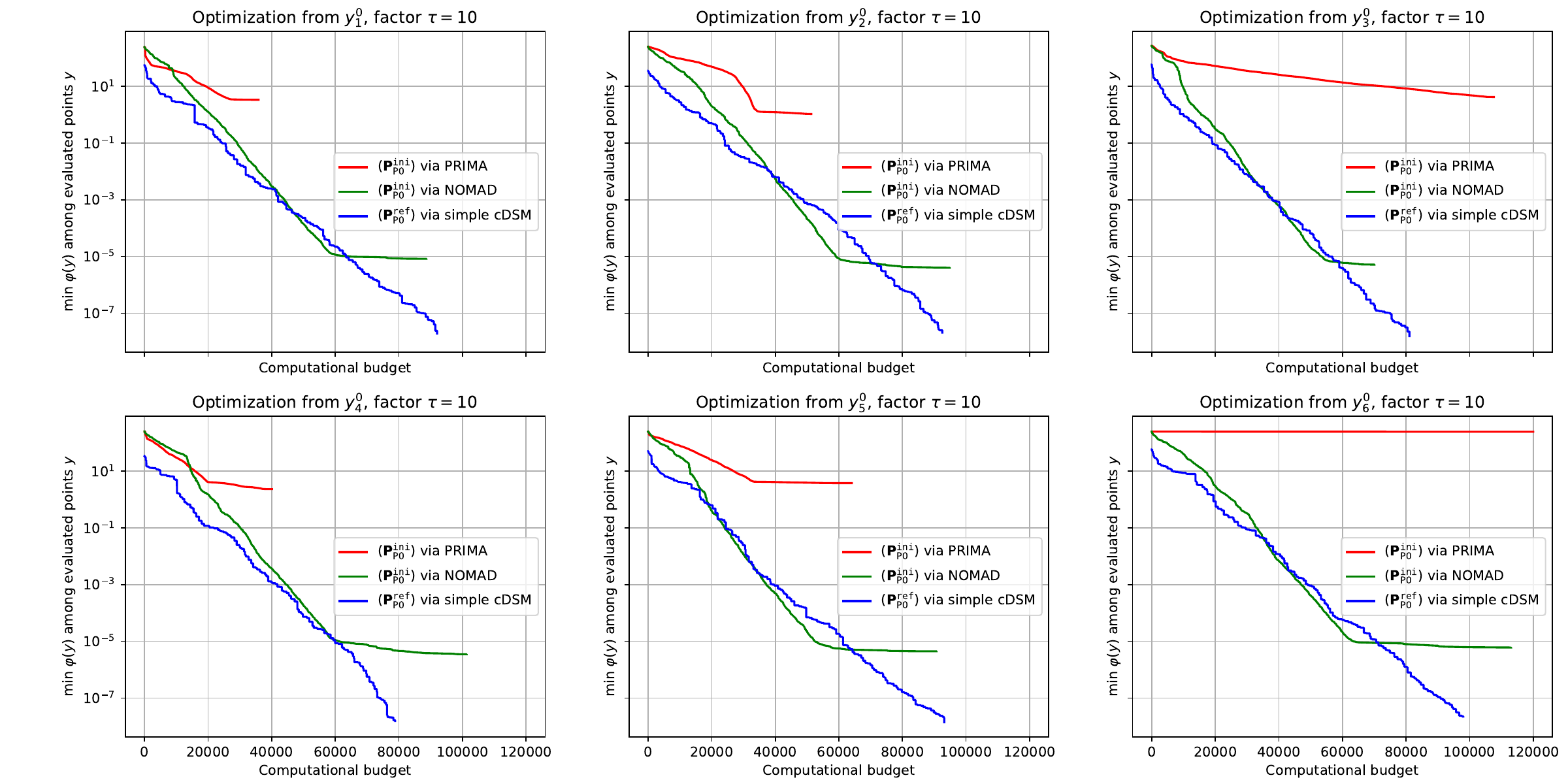}
    \caption{Comparison of the best solution found by each solver depending on the computational cost spent. The six starting points~$y^0_i$,~$i \in \llb1,6\rrb$, are chosen as $6$ observations of the random uniform independent distribution over~$[-5,5]^{100}$.}
    \label{figure:application_dim2_heavy}
\end{figure}

\begin{figure}[!ht]
    \centering
    \includegraphics[width=\linewidth]{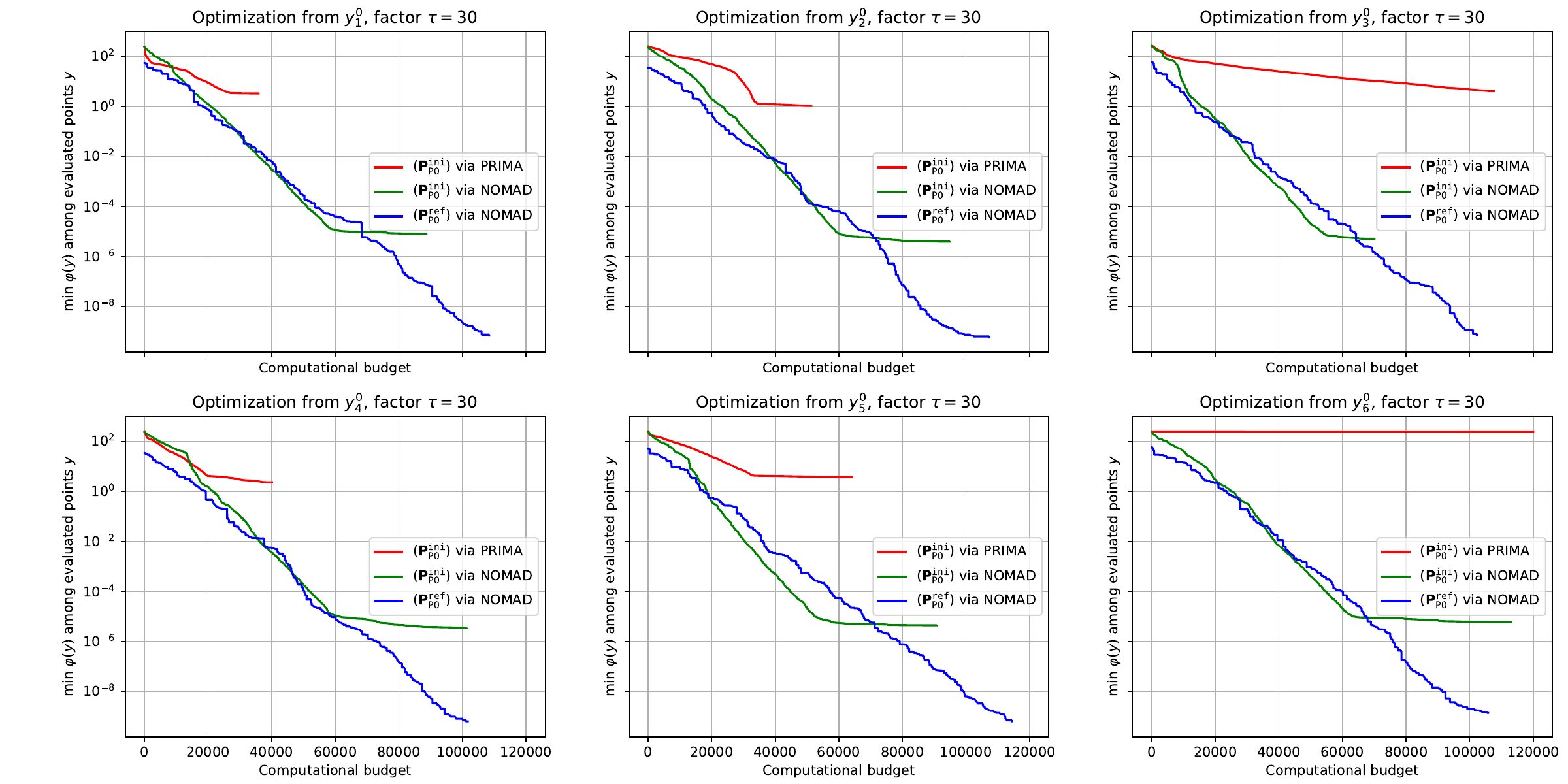}
    \caption{Additional gain provided by solving the reformulated problem using a more efficient solver. Despite the higher value for~$\tau$ compared to the results on Figure~\ref{figure:application_dim2_heavy}, the graphs are similar.}
    \label{figure:application_dim2_heavy_nomad}
\end{figure}

\section{General discussion}
\label{section:discussion}

We conclude this work with a discussion on our contribution.
Section~\ref{section:discussion/comments} summarizes the most important aspects of the \pof, and Section~\ref{section:discussion/perspectives} lists some routes for improvements.

\subsection{Comments on the \pof and the \pom}
\label{section:discussion/comments}

The \pof is a reformulation framework that applies to several classes of problems, including infinite-dimensional, nonconvex, discontinuous and unbounded problems.
The idea is to pinpoint the difficult aspect in Problem~\eqref{problem:P_initial}, and to partition the variables space so that this difficulty is transparent in each partition set.
Then, the \pof defines the oracle function~$\gamma$ (assumed computable) that maps any partition set index to a solution to the (simpler) problem restricted to the partition set.
Thus, the \pof is not a \dfo framework by itself, but rather a way to reformulate a problem with localized difficulties so that Reformulation~\eqref{problem:P_reformulated} tackles only these difficulties.
Then, Reformulation~\eqref{problem:P_reformulated} is manageable using \dfo algorithms, as we formalize in the \pom.
This method is theoretically grounded, as formalized by Theorem~\ref{theorem:solving_P}, since Theorem~\ref{theorem:connections_solutions} ensures that solutions to Reformulation~\eqref{problem:P_reformulated} lead to solutions to Problem~\eqref{problem:P_initial}, and Proposition~\ref{proposition:cDSM_applied_to_Phi} ensures that Algorithm~\ref{algo:covering_framework} indeed solves Reformulation~\eqref{problem:P_reformulated}.

A stringent practical aspect of the \pof is the oracle function~$\gamma$.
For all~$x \in X$, Subproblem~\eqref{problem:P_subproblem} must have a computable global solution.
Ensuring this requirement is challenging and likely problem-dependent, but some applications already fit in this framework (as discussed in Sections~\ref{section:intro/motivation},~\ref{section:example_optimal_control},~\ref{section:applications} and~\ref{section:applications_heavy}), and we are working on possible ways to significantly relax this requirement (see Section~\ref{section:discussion/perspectives}).
In some cases,~$\gamma$ is analytically tractable, but the \pof does not need that~$\gamma$ has a closed-form expression.

For theoretical considerations, our main contributions are Theorems~\ref{theorem:connections_solutions} and~\ref{theorem:solving_P}, and Assumptions~\ref{assumption:pof_minimal} and~\ref{assumption:pof_additional} ensuring their validity.
Yet, the proof of Theorem~\ref{theorem:connections_solutions} actually shows that our analysis is not tight.
First, Theorem~\refbis{theorem:connections_solutions}{global} holds under no assumption except the applicability of the \pof, and Theorem~\refbis{theorem:connections_solutions}{local} holds as soon as~$\chi$ is continuous.
Consequently, neither Assumptions~\ref{assumption:pof_minimal} and~\ref{assumption:pof_additional} nor Algorithm~\ref{algo:covering_framework} are required when we may compute analytically a (global or local) solution to Reformulation~\eqref{problem:P_reformulated}.
This scenario arises in some nontrivial cases, as in Section~\ref{section:example_optimal_control}.
Second, we require that~$\Ybb$ is a Banach space in the whole paper, but actually our analysis requires only for~$\Ybb$ to be complete, and this is needed only for Theorems~\refbis{theorem:connections_solutions}{generalized} and~\ref{theorem:solving_P}.
As a consequence, it is even not mandatory to ensure the completeness of~$\Ybb$ in Theorems~\refbis{theorem:connections_solutions}{global} and~\refbis{theorem:connections_solutions}{local}.
Hence, in Section~\ref{section:example_optimal_control}, we could use any norm on~$\Acal\Ccal$.
Third, Theorem~\ref{theorem:solving_P} is stronger if we moreover assume that~$\varphi$ is lower semicontinuous and~$\Omega$ is closed.
In this case, Theorem~\ref{theorem:solving_P} ensures that~$\gamma(x^*)$ is a local solution to Problem~\eqref{problem:P_initial} for all~$x^*$.
In our analysis where this additional assumption does not hold, Theorem~\ref{theorem:solving_P} only ensures that~$\gamma(x^*)$ is a generalized local solution, as the subsequent claim ensuring that~$\gamma(x^*)$ is a (usual) local solution relies on a sufficient but not necessary condition.
Sections~\ref{section:applications/monovariable} and~\ref{section:applications/nonlinear} illustrate cases where~$\varphi$ is not semicontinuous and~$\gamma(x^*)$ is, respectively, a local solution nevertheless, or only a generalized local solution as claimed.

We did not find generic rules that ensure Assumptions~\ref{assumption:pof_minimal} and~\ref{assumption:pof_additional}.
Yet, they are likely met in practice and not very demanding, as we discuss in Remarks~\ref{remark:discussion_assumption_minimal} and~\ref{remark:discussion_assumption_additional}.
Moreover, Assumptions~\ref{assumption:pof_minimal} and~\ref{assumption:pof_additional} become tractable in some cases, as in Section~\ref{section:applications}.
Indeed, they hold if~$\Xbb$ admits a partition into ample continuity sets of both~$\sigma$ and~$\varepsilon$ (a light assumption similar to~\cite{AuBoBo24Covering}), and~$\widetilde{\varphi}$ is piecewise uniformly continuous and bounded below with bounded level sets.
Finally, if the assumptions do not hold, the \pom becomes heuristic, but none of our experiments in~\cite{BouchetPhD,BoAuBo21PWMayerCost} exhibits a drop in the performance of the method.

Theorems~\ref{theorem:connections_solutions} and~\ref{theorem:solving_P} assume that~$\gamma$ returns exact global solutions to Subproblem~\eqref{problem:P_subproblem} for all~$x \in \Xbb$ and Algorithm~\ref{algo:covering_framework} runs \textit{ad infinitum} to solve Reformulation~\eqref{problem:P_reformulated} exactly.
None of these assumptions hold true in practice.
Nevertheless, our numerical experiments show that the \pom remains efficient when~$\gamma$ returns approximations of global solutions and Algorithm~\ref{algo:covering_framework} is interrupted early.
In Sections~\ref{section:applications/dim2} and~\ref{section:applications_heavy/dim2_heavy},~$\gamma$ is defined as the output of a numerical method solving Subproblem~\eqref{problem:P_subproblem}, for all~$x \in X$, with high confidence to approximate a global solution.
We also observe in~\cite{BoAuBo21PWMayerCost} that the \pom remains effective when~$\gamma(x)$ consists in the output of some solver tackling Subproblem~\eqref{problem:P_subproblem}, even though the solver possesses only guarantees to identify a point satisfying necessary optimality conditions.
Then, such approximations are sufficient in practice to reap the benefits of the \pof, while our theory justifies the methodology when~$\gamma$ returns global solutions exactly and Algorithm~\ref{algo:covering_framework} runs \textit{ad infinitum}.

Our \pom solves Reformulation~\eqref{problem:P_reformulated} using a \dfo algorithm with a \covering step.
We consider the \cdsm (Algorithm~\ref{algo:cdsm}) in our numerical experiments, but any \dfo algorithm may be enhanced with a \covering step, as discussed in Section~\ref{section:results/algorithm} and~\cite{AuBoBo24Covering}.
The choice of the most appropriate \dfo algorithm is likely to depend on~$\Phi$.
In general, Reformulation~\eqref{problem:P_reformulated} has little specificity that differentiates it from a generic blackbox optimization problem, so there is no reason to favour a \dsm over other classes of methods.
In particular, if~$\dim(\Xbb)$ is too large, a \dsm may not be the most suited method to solve Reformulation~\eqref{problem:P_reformulated}.
However, to our best knowledge, no popular \dfo solver implements algorithms with a \covering step yet.
Then, all fast solvers we may consider are only guaranteed to return a point satisfying necessary optimality conditions (instead of local optimality), so they are not compatible with Theorem~\ref{theorem:solving_P} yet.
Nevertheless,~\cite{AuBoBo24Covering} remarks that the \covering step is likely optional in practice.
We second this observation in Section~\ref{section:applications_heavy/dim2_heavy}, as solving Reformulation~\eqref{problem:P_reformulated} using \nomad provides better results than our naive \cdsm, even though \nomad does not implement a \covering step.

The usual case of application of the \pom is when~$0 < \dim(\Xbb) \ll \dim(\Ybb)$.
Moreover,~$\dim(\Xbb) \leq 50$ is desirable when solving Reformulation~\eqref{problem:P_reformulated} with the \cdsm, for performance reason~\cite[Section~1.4]{AuHa2017}.
We stress that the performance of the \pom depends only on~$\dim(\Xbb)$ and on the computation time related to~$\gamma$.
Hence,~$\dim(\Ybb)$ does not directly matter, so it may be possibly infinite as in~\cite{BoAuBo21PWMayerCost}.
Moreover, the more efficient the \dfo solver solving Reformulation~\eqref{problem:P_reformulated} is, the larger the constant~$\tau$ from Section~\ref{section:applications_heavy} (the relative cost to evaluate~$\gamma$ versus those to evaluate~$\varphi$) can be.
As a rule of thumb, if we can estimate the number~$\Nbf^\texttt{ref}$ of points in~$\Xbb$ required by the \dfo solver to solve Reformulation~\eqref{problem:P_reformulated}, and~$\Nbf^\texttt{ini}$ of points in~$\Ybb$ required by a dedicated solver to solve Problem~\eqref{problem:P_initial} directly, then the \pom is useful when~$\Nbf^\texttt{ref}\tau < \Nbf^\texttt{ini}$.
We also remark that the way the partition is indexed influences the shape of~$\Phi$.
The solving process may be substantially simplified by a re-parametrization of~$\Xbb$ that avoid concentrations of discontinuities or mitigate local variations of~$\Phi$.

\subsection{Perspectives for future work}
\label{section:discussion/perspectives}

Two lines of improvements appear for extending the \pof.
The first concerns the oracle function~$\gamma$.
It is unrealistic to assume that a global solution to each subproblem is computable.
A future work will define~$\gamma$ as, for all~$x \in X$, an approximation of a local solution to Subproblem~\eqref{problem:P_subproblem}.
This would be representative of most practical cases.
We test this idea in~\cite[Chapter~7]{BouchetPhD} in a context of optimal control problems, by defining, for all~$x \in X$,~$\gamma(x)$ as the output of a well-suited numerical method (chosen according to acknowledged guidelines from the literature) solving Subproblem~\eqref{problem:P_subproblem}.
This approach seems promising and works well in our experiments, although the theory is technical so we are still working to simplify it.
We may also determine if, given~$x^* \in X$ satisfying necessary optimality conditions for Reformulation~\eqref{problem:P_reformulated},~$\gamma(x^*)$ satisfies necessary optimality conditions for Problem~\eqref{problem:P_initial}.
This would allow tackling Reformulation~\eqref{problem:P_reformulated} with any \dfo algorithm that returns points satisfying only necessary optimality conditions, instead of imposing the \covering step to ensure local optimality.

Another possible improvement concerns the solving process of Reformulation~\eqref{problem:P_reformulated}.
Currently, Reformulation~\eqref{problem:P_reformulated} is defined according to an \textit{extreme barrier}~\cite{AuDe2006}, since~$\Phi(x) \defequal +\infty$ for all~$x \in \Xbb$ such that Subproblem~\eqref{problem:P_subproblem} is infeasible.
We observed in~\cite[Chapter~7]{BouchetPhD} that an extreme barrier may lead to an important number of evaluated points returning the value~$+\infty$.
A \textit{progressive barrier}~\cite{AuDe09a} may be more efficient.
We have initiated some research to associate an \textit{infeasibility metric} to all points~$x \in \Xbb$ and use it in the progressive barrier.
For all~$x \in \Xbb$, our infeasibility associated to~$x$ with respect to Reformulation~\eqref{problem:P_reformulated} is the infimum over all~$y \in \Ybb(x)$ of the infeasibility of~$y$ with respect to Subproblem~\eqref{problem:P_subproblem}.
In all the problems we tested in~\cite[Chapter~7]{BouchetPhD}, solving Reformulation~\eqref{problem:P_reformulated} is significantly easier with such a progressive barrier than with an extreme barrier.

Beyond these two main lines of improvement, we also plan to develop more advanced algorithms for the \pom.
This may be seen as a prototypical hybrid method using either a \dfo algorithm (to solve Reformulation~\eqref{problem:P_reformulated}) and other classes of algorithms (to solve Subproblem~\eqref{problem:P_subproblem} for any~$x \in X$).
Roughly speaking, we may solve Problem~\eqref{problem:P_initial} by optimizing jointly the partition set to consider and the point to consider into the set.
We may also learn or dynamically adapt the partition and its indexation to improve the behaviour of~$\Phi$ and the computational cost of~$\gamma$.

Finally, Section~\ref{section:applications} does not enclose all problems where the \pof is useful.
As the theory is now stated, applying the \pof requires only to follow Section~\ref{section:results}.
We plan to reword our work~\cite{BoAuBo21PWMayerCost} about discontinuous optimal control problems accordingly.
Another direction is related to parametric optimization~\cite{FeKaKr21Param,St18Parametric}, since our results are directly transferrable to this context.
We also plan to tackle \textit{counterfactuals for contextual optimization} from machine learning~\cite{SaChDeFoFrVi25SurveyContextualCO,VeBoHoHiDiSh22CounterfactualReview,VAFoPaVi24CFOPT}, which may relate well to the \pof.

\bibliography{bibliography.bib}

\end{document}